\numberwithin{equation}{section}
\newtheorem{Theorem}{Theorem}[section]
\newtheorem{Definition}[Theorem]{Definition}
\newtheorem{Lemma}[Theorem]{Lemma}
\newtheorem{Remark}[Theorem]{Remark}
\newtheorem{ex}[Theorem]{Example}
\newtheorem{prop}[Theorem]{Proposition}
\newtheorem{Corollary}[Theorem]{Corollary}
\newtheorem{Question}[Theorem]{Question}
\begin{document} 
 
\title{Joint ergodicity of piecewise monotone interval maps}
\author[V. Bergelson]{Vitaly Bergelson}

\address[V. Bergelson]{Department of Mathematics\\ Ohio State University \\ Columbus, OH 43210, USA}
\email{vitaly@math.ohio-state.edu}

\author[Y. Son]{Younghwan Son}
\thanks{The second author is supported by the NRF of Korea (NRF-2020R1A2C1A01005446).}
\address[Y. Son]{Department of Mathematics \\  POSTECH \\  Pohang, 37673, Korea}
\email{yhson@postech.ac.kr}
\maketitle

\begin{abstract}
For $i = 0, 1, 2, \dots, k$, let $\mu_i$ be a Borel probability measure on $[0,1]$ which is equivalent to Lebesgue measure $\lambda$ and let $T_i:[0,1] \rightarrow [0,1]$ be $\mu_i$-preserving ergodic transformations.
 We say that transformations $T_0, T_1, \dots, T_k$ are {\em uniformly jointly ergodic} with respect to $(\lambda; \mu_0, \mu_1, \dots, \mu_k)$ if
for any $f_0, f_1, \dots, f_k \in L^{\infty}$, 
\begin{equation*}
    \lim\limits_{N -M \rightarrow \infty} \frac{1}{N-M } \sum\limits_{n=M}^{N-1} f_0 ( T_0^{n} x)  \cdot f_1 (T_1^n x) \cdots f_k (T_k^n x)  =
    \prod_{i=0}^k \int f_i \, d \mu_i   \quad \text{ in } L^2(\lambda).
\end{equation*}

We establish convenient criteria for uniform joint ergodicity and obtain numerous applications, most of which deal with interval maps. 
Here is a description of one such application. Let $T_G$ denote the Gauss map, $T_G(x) = \frac{1}{x} \, (\bmod  \, 1)$,  and, for $\beta >1$, let $T_{\beta}$ denote the $\beta$-transformation defined by $T_{\beta} x = \beta x \, (\bmod  \,1)$.
Let $T_0$ be an ergodic interval exchange transformation. Let $\beta_1 , \cdots , \beta_k$ be distinct real numbers with $\beta_i >1$ and assume that $\log \beta_i \ne \frac{\pi^2}{6 \log 2}$ for all $i = 1, 2, \dots, k$. Then for any $f_{0}, f_1, f_{2}, \dots, f_{k+1} \in L^{\infty} (\lambda)$, 
\begin{equation*}
\begin{split} 
\lim\limits_{N -M \rightarrow \infty} \frac{1}{N -M } \sum\limits_{n=M}^{N-1} 
&   f_{0} (T_0^n x) \cdot  f_{1} (T_{\beta_1}^n x) \cdots f_{k} (T_{\beta_k}^n x) \cdot  f_{k+1} (T_G^n x)  \\
 &= \int f_{0} \, d \lambda \cdot \prod_{i=1}^k \int f_{i} \, d \mu_{\beta_i} \cdot  \int f_{k+1} \, d \mu_G  \quad \text{in } L^{2}(\lambda).
\end{split}
\end{equation*}

We also study the phenomenon of {\em joint mixing}. 
Among other things we establish joint mixing for skew tent maps and for restrictions of finite Blaschke products to the unit circle.
\end{abstract}

\section{Introduction}
\label{sec:Intro}

A measure preserving transformation $T$ on a probability space $(X, \mathcal{B}, \mu)$ is called {\em ergodic} if the only measurable sets $B$ with $T^{-1} B = B$ satisfy $\mu(B) = 0$ or $\mu(B) = 1$. Transformation $T$ naturally induces an isometric operator on $L^p$ spaces, $1 \leq p \leq \infty$, by the formula $Tf := f \circ T$.
The classical von Neumann's mean ergodic theorem asserts that $T$ is ergodic if and only if for any $f \in L^{\infty}(X)$, 
\[ \lim_{N-M \rightarrow \infty} \frac{1}{N-M} \sum_{n=M}^{N-1} T^n f = \int f \, d \mu \, \text{ in } L^2(\mu).\]

The following definition of {\em joint ergodicity}, which was introduced in \cite{BeBe1}, is an extension of notion of ergodicity to a finite family of measure preserving transformations on $(X, \mathcal{B}, \mu)$.

\begin{Definition}
Measure preserving transformations  $T_1, \dots, T_k$  on a probability space $(X, \mathcal{B}, \mu)$  are called {\em uniformly jointly ergodic} if  for any $f_1, \dots f_k \in L^{\infty} (\mu)$, 
\begin{equation*}
\lim_{N -M \rightarrow \infty} \frac{1}{N -M} \sum_{n=M}^{N-1} T_1^{n} f_1 \cdot T_2^n f_2 \cdots T_k^n f_k = \prod_{i=1}^k \int f_i \, d \mu \quad \text{ in } L^2(\mu). 
\end{equation*}
\end{Definition}

The following theorem, which was proved in \cite{BeBe1}, provides a useful criterion for joint ergodicity under the assumption that the transformations $T_i$ are invertible and commuting. 
\begin{Theorem}
\label{Intro:ThmJENS1}
Let $T_1, \dots, T_k$ be commuting, invertible measure preserving transformations on $(X, \mathcal{B}, \mu)$.  
The following conditions are equivalent:
\begin{enumerate}[(i)]
\item $T_1, \dots, T_k$ are uniformly jointly ergodic. 
\item \begin{enumerate}
\item $T_1 \times T_2 \times \cdots \times T_k$ is ergodic with respect to $\mu \times \mu \times \cdots \times \mu$
\item $T_i^{-1} T_j$ is ergodic for any $i \ne j$.
\end{enumerate}
\end{enumerate} 
\end{Theorem}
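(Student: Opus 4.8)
The plan is to prove both implications after a standard multilinearity reduction. Writing $u_n := T_1^n f_1 \cdots T_k^n f_k$ and splitting each $f_i = \big(f_i - \int f_i\,d\mu\big) + \int f_i\,d\mu$, multilinearity of the average shows that uniform joint ergodicity is equivalent to the single assertion that whenever $\int f_{i_0}\,d\mu = 0$ for some index $i_0$, one has $\frac{1}{N-M}\sum_{n=M}^{N-1} u_n \to 0$ in $L^2(\mu)$. I will verify (i)$\Rightarrow$(ii) directly and (ii)$\Rightarrow$(i) by a van der Corput argument combined with induction on $k$. The Cesàro averages are over arbitrary intervals $[M,N)$ and uniformity in $M$ is preserved throughout, so I suppress it below.

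For (i)$\Rightarrow$(ii)(a) I argue by contraposition. Condition (i) already forces each $T_i$ to be ergodic (take all functions but one equal to $1$). Hence if $T_1\times\cdots\times T_k$ is not ergodic, there are eigenfunctions $T_i\phi_i = \lambda_i\phi_i$ with $|\phi_i|\equiv 1$, satisfying $\prod_i\lambda_i = 1$ and not all $\lambda_i = 1$, so that $\int\phi_i\,d\mu = 0$ for at least one $i$. Taking $f_i = \phi_i$ gives $u_n = (\prod_i\lambda_i)^n\prod_i\phi_i = \prod_i\phi_i$, a nonzero constant sequence whose averages cannot converge to $\prod_i\int\phi_i\,d\mu = 0$, contradicting (i). For (i)$\Rightarrow$(ii)(b), fix $i\neq j$, set $f_i = f$, $f_j = g$ and all other functions equal to $1$, and integrate the defining limit against the constant $1$; the change of variables $x\mapsto T_i^{-n}x$ (legitimate since $T_i$ is invertible and measure-preserving) together with commutativity turns $\int (f\circ T_i^n)(g\circ T_j^n)\,d\mu$ into $\int f\cdot\big((T_i^{-1}T_j)^n g\big)\,d\mu$. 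Thus $\frac{1}{N-M}\sum_n (T_i^{-1}T_j)^n g \to \int g\,d\mu$ weakly, and since $L^\infty$ is dense in $L^2$ the mean ergodic theorem yields ergodicity of $T_i^{-1}T_j$.

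For (ii)$\Rightarrow$(i), assume $\int f_1\,d\mu = 0$ and apply the van der Corput inequality to $(u_n)$, which controls $\limsup_N\|\frac1{N}\sum_n u_n\|_2^2$ by $\lim_H\frac1H\sum_{h=1}^H a_h$, where $a_h := \lim_N\frac1N\sum_n\langle u_{n+h},u_n\rangle$. The substitution $x\mapsto T_1^{-n}x$ rewrites $\langle u_{n+h},u_n\rangle = \int g_1^h\cdot\prod_{i=2}^k S_i^n g_i^h\,d\mu$, where $S_i := T_1^{-1}T_i$ and $g_i^h := (T_i^h f_i)\overline{f_i}$; note $S_1 = \mathrm{id}$, so the $i=1$ factor is untransformed. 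If the reduced family $S_2,\dots,S_k$ is jointly ergodic, the inner $(k-1)$-fold average converges to $\prod_{i=2}^k\int g_i^h\,d\mu$, whence $a_h = \prod_{i=1}^k\langle T_i^h f_i,f_i\rangle = \langle R^h F, F\rangle$ with $R := T_1\times\cdots\times T_k$ and $F := f_1\otimes\cdots\otimes f_k \in L^2(\mu^k)$. By the mean ergodic theorem $\frac1H\sum_{h=1}^H a_h \to \|P F\|^2$, where $P$ is the orthogonal projection onto $R$-invariant functions; condition (a) forces $P$ to be the projection onto constants, so $\|PF\|^2 = \big|\prod_i\int f_i\,d\mu\big|^2 = 0$. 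This gives $\frac1{N-M}\sum_n u_n\to 0$ and completes the inductive step.

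The main obstacle is closing this induction: invoking joint ergodicity of the reduced family $S_2,\dots,S_k$ requires them to satisfy the hypotheses of the theorem one level down. The pairwise condition is immediate, since $S_i^{-1}S_j = T_i^{-1}T_j$ is ergodic by (b); the difficulty is the product condition, namely that $(T_1^{-1}T_2)\times\cdots\times(T_1^{-1}T_k)$ is ergodic on $X^{k-1}$. This cannot come from (b) alone, since pairwise ratio ergodicity is genuinely weaker than product ergodicity — for the irrational rotation $R_\alpha$ the maps $R_\alpha$ and $R_\alpha^2$ have ergodic ratio $R_\alpha^{-1}$ but non-ergodic product — so condition (a) must be used here. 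Heuristically the implication reduces, via the eigenfunction description of invariant functions of a tensor product, to the character-theoretic fact that if $\prod_{i=1}^k\chi_i(\alpha_i)=1$ forces all $\chi_i$ trivial then so does $\prod_{i=2}^k\chi_i(\alpha_i-\alpha_1)=1$ (take $\chi_1 = (\prod_{i\ge 2}\chi_i)^{-1}$ and apply (a)). Making this rigorous is delicate because the eigenfunctions of the distinct transformations $T_i$, $T_i^{-1}T_j$, and $T_1^{-1}T_i$ need not live in a common compact group-rotation factor, so one cannot simply pass to a Kronecker model; extracting product ergodicity of the reduced family cleanly from (a) and (b) is the technical heart. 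By contrast, the van der Corput limit interchanges and the existence of the limits $a_h$ are routine.
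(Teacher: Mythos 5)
Your treatment of (i)$\Rightarrow$(ii) is correct, and the van der Corput computation together with the induction-on-$k$ skeleton for (ii)$\Rightarrow$(i) is the right machinery; but, as you yourself flag, the proof of (ii)$\Rightarrow$(i) has a genuine gap: the inductive step requires condition (ii)(a) for the reduced family, i.e.\ ergodicity of $(T_1^{-1}T_2)\times\cdots\times(T_1^{-1}T_k)$, and you only offer a heuristic in the Kronecker case. Since the entire inductive step rests on invoking the theorem for $S_2,\dots,S_k$, the implication is not established as written. The gap is real but entirely fillable, and no passage to a common group-rotation factor is needed. Suppose $\phi_i$ is a unimodular eigenfunction of $S_i:=T_1^{-1}T_i$ with eigenvalue $\lambda_i$, $i=2,\dots,k$, and $\prod_{i=2}^k\lambda_i=1$. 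Because $T_1$ commutes with $S_i$, the function $\phi_i\circ T_1$ is again a $\lambda_i$-eigenfunction of $S_i$; ergodicity of $S_i$ (condition (b)) makes the $\lambda_i$-eigenspace one-dimensional, so $\phi_i\circ T_1=c_i\phi_i$ for some unimodular constant $c_i$, and consequently $\phi_i\circ T_i=\phi_i\circ T_1S_i=\lambda_ic_i\phi_i$. Thus each $\phi_i$ is a \emph{simultaneous} eigenfunction of $T_1$ and $T_i$, and the function $\Phi(x_1,\dots,x_k):=\overline{\prod_{i=2}^k\phi_i(x_1)}\cdot\prod_{i=2}^k\phi_i(x_i)$ satisfies $\Phi\circ(T_1\times\cdots\times T_k)=\bigl(\prod_{i=2}^k\lambda_i\bigr)\Phi=\Phi$. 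Condition (a) forces $\Phi$ to be constant, and Fubini then forces each $\phi_i$ to be constant a.e., whence $\lambda_i=1$. Combined with condition (b) and Lemma \ref{prop2.1}, this gives ergodicity of $S_2\times\cdots\times S_k$; the pairwise condition for the $S_i$ is, as you note, immediate. With this lemma inserted, your induction closes and the proof is complete.

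For context: the paper does not reprove this statement but quotes it from \cite{BeBe1}; the route you chose is essentially the original Berend--Bergelson argument. The criterion the paper actually proves (Theorem \ref{criterion:JE}) replaces the quotient-map condition (ii)(b) by a condition on the averaged integrals, or equivalently by a $\limsup$ bound on $\frac{1}{N-M}\sum_n\mu(\bigcap_iT_i^{-n}A_i)$, and deduces $L^2$-convergence via a weak* compactness argument (any limit of the averaged measures is an absolutely continuous invariant measure for the product, hence equals the product measure) followed by a single application of van der Corput. That approach needs neither invertibility, nor commutativity, nor induction on $k$, which is precisely why the reduction you found delicate never arises there.
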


We list now some examples illustrating the phenomenon of joint ergodicity, which can be verified with the help of Theorem \ref{Intro:ThmJENS1}.

\begin{ex}
\label{ex:intro}
\mbox{}

\begin{enumerate}
\item  Any finite family of independent translation on $d$-dimensional torus $\mathbb{T}^d$ is uniformly jointly ergodic. More formally,
let $\alpha_1, \dots, \alpha_k \in \mathbb{T}^d$ be such that for any non-zero $k$-tuple $(m_1, \dots, m_k) \in \mathbb{Z}^k$, $\sum_{i=1}^k m_i \alpha_i \notin \mathbb{Q}^k$.  
Let $T_i: \mathbb{T}^d \rightarrow \mathbb{T}^d$ be defined by $T_i x = x+ \alpha_i$ for $1 \leq i \leq k$. Then $T_1, \dots, T_k$ are uniformly jointly ergodic on $(\mathbb{T}^d, \mathcal{B}, \lambda)$, where $\mathcal{B}$ is the Borel $\sigma$-algebra and $\lambda$ is the Lebesgue measure.
\item (cf.  \cite[Theorem 4.11]{Furbook} and \cite[Theorem 3.1]{FKO}) Let $T$ be a weakly mixing transformation\footnote{A measure preserving transformation $T$ on $(X, \mathcal{B}, \mu)$ is called
{\em weakly mixing} if $T \times T$ is ergodic on the product space $X \times X$. 
It is well-known and not hard to verify that this condition is equivalent to  
        \[ \lim_{N-M \rightarrow \infty}  \frac{1}{N-M} \sum_{n=M}^{N-1} | \mu(A \cap T^{-n} B) - \mu(A) \mu(B) | =0 \quad \text{for any } A, B \in \mathcal{B}.\]} on a probability space $(X, \mathcal{B}, \mu)$. Then $T^{a_1}, \dots, T^{a_k}$ are uniformly jointly ergodic on  $(X, \mathcal{B}, \mu)$ for any non-zero distinct integers $a_1, \dots, a_k$. 
\item (\cite[Theorem 1]{Berg}) Let $(T_t)_{t \in \mathbb{R}}$ be a continuous measure preserving ergodic flow on a probability space $(X, \mathcal{B}, \mu)$. Fix $k \in \mathbb{N} \, (:= \{1 ,2 , 3, \dots \})$. Then for almost every $k$-tuple $(a_1, \dots, a_k) \in \mathbb{R}^k$, $T_{a_1}, \cdots, T_{a_k}$ are uniformly jointly ergodic on $(X, \mathcal{B}, \mu)$.
\end{enumerate}
\end{ex}

The goal of this paper is to demonstrate that the phenomenon of joint ergodicity has a quite wide range and, in particular, takes place in certain situations when the involved measure preserving transformations are not necessarily commuting or invertible and have different invariant measures.

\begin{Definition}
\label{Int:Def1.1}
Let $(X, \mathcal{B}, \mu)$ be a probability space. Let $T_1, \dots, T_k$ be measurable transformations on $(X, \mathcal{B})$ such that for each $i$, there is a $T_i$-invariant probability measure $\mu_i$, which is equivalent to $\mu$.
Then transformations $T_1, \dots, T_k$ are said to be {\em uniformly jointly ergodic with respect to $(\mu; \mu_1, \dots, \mu_k)$} if for any $f_1, \dots, f_k \in L^{\infty}$,
\begin{equation}
\label{Int:eq1.2}
    \lim\limits_{N -M \rightarrow \infty} \frac{1}{N -M } \sum\limits_{n=M}^{N-1}  T_1^{n} f_1 \cdot T_2^n f_2 \cdots T_k^n f_k = \prod_{i=1}^k \int f_i \, d \mu_i \quad \text{ in } L^2(\mu).
\end{equation}
\end{Definition}

\begin{Remark}
\label{equivalentmeasures}
Let $(X, \mathcal{B}, \mu)$ be a probability space.
Let $\nu$ be a probability measure on $(X, \mathcal{B})$ such that $\nu$ is equivalent to $\mu$. 
Then $L^{\infty} (\mu) = L^{\infty} (\nu)$. 
Moreover, it is not hard to check that $T_1, \dots, T_k$ are uniformly jointly ergodic with respect to  $(\mu; \mu_1, \dots, \mu_k)$ if and only if $T_1, \dots, T_k$ are uniformly jointly ergodic with respect to  $(\nu; \mu_1, \dots, \mu_k)$ (see Lemma \ref{Lem:Conv:Mean} below).
\end{Remark}

We list now some classical examples of measure preserving transformations which will be used below in examples illustrating the phenomenon of joint ergodicity as manifested by formula \eqref{Int:eq1.2}. 

\begin{enumerate}
\item  For $\beta >1$, define $T_{\beta}: [0,1] \rightarrow [0,1]$ by $T_{\beta} x = \beta x \bmod \, 1$. It is a classical fact that there exists a unique $T_{\beta}$-invariant ergodic probability measure $\mu_{\beta}$ which is equivalent to the Lebesgue measure $\lambda$.  
Indeed, if $\beta$ is an integer $b \geq 2$, then $T_b x = bx \, \bmod \, 1$ preserves Lebesgue measure $\lambda$ and is ergodic. 
If $\beta \notin \mathbb{N}$, R\'{e}nyi (\cite{Re}) showed that  there exists a unique ergodic $T_{\beta}$-invariant probability measure $\mu_{\beta}$ which satisfies $1 - \frac{1}{\beta} \leq \frac{d \mu_{\beta}}{d \lambda} \leq \frac{1}{1 -1 / \beta}$. (See also \cite{Ge} and \cite{Pa0}.) For an integer $b \geq 2$, $T_b$ is called times $b$ map and for a non-integer $\beta > 1$, $T_{\beta}$ is called a beta transformation. (Sometimes $T_{\beta}$ is also called a R\'{e}nyi transformation.) 
\item  Gauss map $T_G: [0,1] \rightarrow [0,1]$ is defined by 
$$T_G(x) = \begin{cases} 
      \frac{1}{x} \, \bmod 1, & x\ne 0, \\
      0, & x=0.
   \end{cases}
$$  
Let $\mu_G$ denote the Gauss measure defined by $\mu_G (A) = \frac{1}{ \log 2} \int_A \frac{1}{1+x} dx$ for any measurable set $A \subset [0,1]$.
It is well-known that  $T_G$ is $\mu_G$-preserving and ergodic. (See \cite{Kno} and \cite{Ryll}.)
\item Interval exchange transformations. 
For a probability vector $\mathbf{a} = (a_1, \dots, a_n)$ ($a_i > 0$ and $\sum_{i=1}^n a_i = 1$), let 
$\beta_0(\mathbf{a}) = 0$ and $ \beta_i (\mathbf{a}) =  a_1 + \cdots + a_i$ for $1 \leq i \leq n.$  
Write $I_i =  [\beta_{i-1}(\mathbf{a}), \beta_{i} (\mathbf{a}) ) \, \text{ for } i = 1, 2, \dots, n.$
Let $\pi$ be a permutation of the symbols $\{1, 2, \dots, n\}$ and let
 $\mathbf{a}^{\pi} = (a_{\pi^{-1}(1)}, \dots, a_{\pi^{-1}(n) } )$. 
An $(\mathbf{a}, \pi)$-interval exchange transformation is defined by 
\[ T_{\mathbf{a}, \pi} (x) = x - \beta_{i-1} (\mathbf{a}) + \beta_{\pi(i-1)} (\mathbf{a}^{\pi}), \quad x \in I_i.\]
Thus, $T_{\mathbf{a}, \pi}$ rearranges intervals $I_1, \dots, I_n$ so that the subinterval at position $i$ is moved to position $\pi(i)$. Interval exchange transformations preserve Lebesgue measure since they are piecewise isometries.
Also, it is well known that interval exchange transformations have zero entropy. For example, it follows from Rokhlin's formula. (See Theorem \ref{Rokhlin} below).

\end{enumerate}

The following theorem, which is a rather special case of a more general result that we will prove in Section \ref{sec:jointergodicityPM} below, provides an example of uniformly jointly ergodic transformations in the sense of Definition \ref{Int:Def1.1}.

\begin{Theorem}
\label{Int:Thm:1.2}
Let $T_0$ be  an ergodic interval exchange transformation.  For any $\beta > 1$ with $\log \beta \ne \frac{\pi^2}{6 \log 2} $, the transformations $T_0, T_{\beta}, T_G$ are uniformly jointly ergodic: for any bounded measurable functions $ f_1, f_2, f_3 \in L^{\infty} (\lambda)$,
\begin{equation*}
\lim_{N - M \rightarrow \infty} \frac{1}{N-M} \sum_{n=M}^{N-1} T_0^n f_1 \cdot T_{\beta}^n f_2 \cdot T_G^n f_3 = \int f_1 \, d \lambda \, \int f_2 \,  d \mu_{\beta} \int f_3 \, d \mu_G \quad \text{ in } L^2 (\lambda)
\end{equation*}
\end{Theorem}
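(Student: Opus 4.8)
The plan is to derive Theorem~\ref{Int:Thm:1.2} from the general criterion for uniform joint ergodicity established in Section~\ref{sec:jointergodicityPM}, so that the real task is to verify its hypotheses for the three maps. As a preliminary normalization I would write each $f_i=\bar f_i+c_i$ with $c_i=\int f_i\,d\mu_i$ (so $c_1=\int f_1\,d\lambda$) and $\int\bar f_i\,d\mu_i=0$, and expand the triple product by multilinearity. The fully constant term reproduces the claimed limit, so it suffices to prove that the Ces\`aro averages converge to $0$ in $L^2(\lambda)$ whenever at least one factor has zero mean against its invariant measure. By Remark~\ref{equivalentmeasures} this $L^2$ convergence may be tested against any of the equivalent measures $\lambda$, $\mu_\beta$, $\mu_G$.

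Conceptually, uniform joint ergodicity asserts that the diagonal measure $\Delta_*\lambda$, $\Delta x=(x,x,x)$, equidistributes under $S=T_0\times T_\beta\times T_G$ to the independent coupling $\lambda\times\mu_\beta\times\mu_G$ when tested against product functions. Every weak-$*$ limit of $\frac{1}{N-M}\sum_{n=M}^{N-1}S^n\Delta_*\lambda$ is $S$-invariant and has marginals $\lambda$, $\mu_\beta$, $\mu_G$ — the latter two because $T_\beta$ and $T_G$ are ergodic (R\'enyi; Gauss--Kuzmin), the first by the hypothesis on $T_0$ — hence is a joining of the three systems, and one must show it is the product. The first reduction exploits that an ergodic interval exchange is a piecewise isometry, so by Rokhlin's formula (Theorem~\ref{Rokhlin}) it has zero entropy, whereas $T_\beta$ and $T_G$ are exact and their natural extensions, as well as that of $T_\beta\times T_G$, enjoy the $K$-property. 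Since a deterministic (zero-entropy) system is disjoint from any $K$-system, the $T_0$-coordinate decouples from the rest, and the three-fold statement collapses to the joint ergodicity of the pair $(T_\beta,T_G)$.

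I expect this remaining pair to be the main obstacle, and it is genuinely harder than the interval-exchange factor. By Sinai's theorem the lower-entropy of two Bernoulli-type maps is a factor of the higher, so $T_\beta$ and $T_G$ are \emph{not} disjoint and no soft joinings argument can apply; what one actually needs is the asymptotic independence of the $\beta$-expansion and continued-fraction codings, namely that the law of $(T_\beta^n x,T_G^n x)$ under $\lambda$ relaxes to $\mu_\beta\times\mu_G$. The essential input here is the distinctness of the entropies $h_{\mu_\beta}(T_\beta)=\log\beta$ and $h_{\mu_G}(T_G)=\frac{\pi^2}{6\log 2}$, each computed from Rokhlin's formula as $\int\log|T'|\,d\mu$: equal entropy is exactly the regime in which $T_\beta$ and $T_G$ could be isomorphic, producing a graph-type self-joining that the diagonal would detect and that would destroy joint ergodicity, and the hypothesis $\log\beta\ne\frac{\pi^2}{6\log 2}$ is precisely what excludes it. Finally, because the equidistribution argument controls only the mean, I would close with a van der Corput step to upgrade it to $L^2(\lambda)$ convergence, reducing the claim to the same decorrelation for the shifted functions $(f\circ T_\beta^h)\bar f$ and $(g\circ T_G^h)\bar g$, where the exponential decay of correlations supplied by the exactness of $T_\beta$ and $T_G$ is the quantitative workhorse.
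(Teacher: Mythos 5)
Your first reduction is sound and in fact coincides with a route the paper itself takes in Section \ref{sec:disjoint}: since an ergodic interval exchange has zero entropy while the natural extensions of $T_\beta$ and $T_G$ are $K$, disjointness of zero-entropy systems from $K$-systems decouples the $T_0$-coordinate (this is exactly Corollary \ref{Cor:dis} and Example \ref{ex:notThm4.1}). The problem is the remaining pair $(T_\beta, T_G)$, which you correctly identify as the real difficulty and for which you correctly observe that no joining/disjointness argument can work. But at that point your proposal stops short of an actual proof. Saying that the entropies are distinct, and that equal entropy is "the regime in which $T_\beta$ and $T_G$ could be isomorphic," does not produce the needed statement
\[
\limsup_{n\to\infty}\ \lambda\bigl(T_\beta^{-n}A\cap T_G^{-n}B\bigr)\ \leq\ C\,\mu_\beta(A)\,\mu_G(B),
\]
which (together with ergodicity of the product, via Theorem \ref{criterion:JE}(iv) or Corollary \ref{Cor:L2_joint}) is what joint ergodicity reduces to. Non-isomorphism rules out one particular graph joining; it does not show that the push-forwards of the diagonal measure converge to the product, and the paper explicitly leaves the equal-entropy case open (Question \ref{Que:sameent}), so the implication cannot run through a joinings classification.

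The missing mechanism — the heart of the paper's proof of Theorem \ref{thm:L^2_joint} — is the Shannon--McMillan--Breiman equipartition combined with property B/$\alpha$-mixing at a sublinear gap. Concretely: if $h(T_\beta)<h(T_G)$, then $T_\beta^{-n}A$ is, up to a set of small measure, a union of "good" cylinders of $\mathcal{A}_\beta^{\,n+l}$ of length roughly $e^{-n\log\beta}$; these are much longer than the good cylinders of $\mathcal{A}_G^{\,n-[\delta n]}$, of length roughly $e^{-(n-[\delta n])h(T_G)}$, so $T_\beta^{-n}A$ is well approximated by a set in $\sigma(\mathcal{A}_G^{\,n-[\delta n]})$ (Lemma \ref{sec4:lem2}); then $\alpha$-mixing of $T_G$ applied with gap $[\delta n]$ yields $\lambda(T_\beta^{-n}A\cap T_G^{-n}B)\leq c_0\,\lambda(T_\beta^{-n}A)\,\mu_G(B)+b_{[\delta n]}+O(\epsilon)$. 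This is where the hypothesis $\log\beta\neq\frac{\pi^2}{6\log 2}$ actually enters. Your proposed substitute — a van der Corput step plus "exponential decay of correlations supplied by the exactness of $T_\beta$ and $T_G$" — does not close this gap: decay of correlations controls $\int f\cdot(g\circ T^n)\,d\mu$ for a \emph{single} map $T$, whereas what is needed is decorrelation of $f\circ T_\beta^n$ against $g\circ T_G^n$ for two different maps evaluated at the same time $n$, and no amount of single-map mixing delivers that without the cylinder-length comparison above.
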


\begin{Remark}
Theorem \ref{Intro:thm:L^2_joint} (see below), of which Theorem \ref{Int:Thm:1.2} is a special case, guarantees joint ergodicity of transformations which have different values of entropy. 
This explains the constraint $h(T_{\beta}) =\log \beta \ne h(T_G) = \frac{\pi^2}{6 \log 2}$. (As we have already mentioned above, interval exchange transformations have zero entropy.) We do not know whether Theorem \ref{Int:Thm:1.2} holds when $\log \beta  = \frac{\pi^2}{6 \log 2}$. (See Question \ref{Que:sameent} below.)
\end{Remark}

The following theorem, which is a common generalization of Theorem 3.1 in \cite{BeBe1} and Theorem 2.1 in \cite{BeBe2}, provides convenient criteria for uniform joint ergodicity. 

\begin{Theorem}[Theorem \ref{criterion:JE}]
\label{Intro:ThmJENS}
Let $(X, \mathcal{B}, \mu)$ be a probability space. Let $T_1, \dots, T_k$ be measurable transformations on $(X, \mathcal{B})$ such that for each $i$, there is a $T_i$-invariant probability measure $\mu_i$, which is equivalent to $\mu$.
The following conditions are equivalent:
\begin{enumerate}[(i)]
\item $T_1, \dots, T_k$ are uniformly jointly ergodic with respect to $(\mu; \mu_1, \dots, \mu_k)$: \\
for any bounded measurable functions $f_1, \dots f_k$, 
\begin{equation*}
\lim_{N-M \rightarrow \infty} \frac{1}{N - M} \sum_{n=M}^{N-1} T_1^{n} f_1 \cdot T_2^n f_2 \cdots T_k^n f_k = \prod_{i=1}^k \int f_i \, d \mu_i \quad \text{ in } L^2(\mu). 
\end{equation*} 
\item for any bounded measurable functions $f_0, f_1, \dots f_k$, 
\begin{equation*}
\lim_{N-M \rightarrow \infty} \frac{1}{N - M} \sum_{n=M}^{N-1} \int f_0 \cdot T_1^{n} f_1 \cdot T_2^n f_2 \cdots T_k^n f_k \, d \mu = \int f_0 \, d \mu \cdot \prod_{i=1}^k \int f_i \, d \mu_i. 
\end{equation*} 
\item \begin{enumerate}
\item $T_1 \times T_2 \times \cdots \times T_k$ is ergodic with respect to $\mu_1 \times \mu_2 \times \cdots \times \mu_k$
\item for  any bounded measurable functions $f_1, \dots f_k$,
\begin{equation*}
\lim_{N-M \rightarrow \infty} \frac{1}{N-M} \sum_{n=M}^{N-1}  \int T_1^{n} f_1 \cdot T_2^n f_2 \cdots T_k^n f_k \, d \mu = \prod_{i=1}^k \int f_i \, d \mu_i .
\end{equation*}
\end{enumerate}
\item \begin{enumerate}
\item $T_1 \times T_2 \times \cdots \times T_k$ is ergodic with respect to $\mu_1 \times \mu_2 \times \cdots \times \mu_k$
\item there is $C>0$ such that for any $A_1, A_2, \dots, A_k \in \mathcal{B}$
\begin{equation*}
\limsup_{N-M \rightarrow \infty} \frac{1}{N-M} \sum_{n=M}^{N-1} \mu (T_1^{-n} A_1 \cap T_2^{-n} A_2 \cap \cdots \cap T_k^{-n} A_k) \leq C \prod_{i=1}^k \mu_i(A_i) .
\end{equation*}
\end{enumerate}
\end{enumerate} 
\end{Theorem}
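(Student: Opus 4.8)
The plan is to prove the equivalences by a cycle of implications organized around the product system. Write $T=T_1\times\cdots\times T_k$ acting on $(X^k,\nu)$ with $\nu=\mu_1\times\cdots\times\mu_k$, let $F=f_1\otimes\cdots\otimes f_k$, set $c=\prod_{i=1}^k\int f_i\,d\mu_i=\int F\,d\nu$, and let $\Delta:X\to X^k$ be the diagonal embedding $\Delta(x)=(x,\dots,x)$. Then the average in (i) is $A_N(x)=\frac{1}{N-M}\sum_{n=M}^{N-1}F(T^n\Delta(x))$, so the whole statement concerns the behaviour of the orbit of diagonal points under $T$. Two implications are immediate and I would record them first: (i)$\Rightarrow$(ii), since $L^2(\mu)$-convergence of $A_N$ to the constant $c$ yields, upon pairing with any $f_0\in L^\infty\subset L^2(\mu)$, the weak statement (ii); and (iii)$\Rightarrow$(iv), since inserting $f_i=\mathbf 1_{A_i}$ into (iii)(b) makes the limit exist and equal $\prod_i\mu_i(A_i)$, so (iv)(b) holds with $C=1$ while (iv)(a) is literally (iii)(a).

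The analytic heart is (iii)$\Rightarrow$(i), which I would prove by van der Corput. Put $u_n=T_1^nf_1\cdots T_k^nf_k$ and $v_n=u_n-c$ in $L^2(\mu)$. For fixed $h$ one has $\langle u_{n+h},u_n\rangle_\mu=\int\prod_iT_i^n g_i^{(h)}\,d\mu$ with $g_i^{(h)}=(T_i^hf_i)\overline{f_i}\in L^\infty$, so (iii)(b) applied to the tuple $(g_1^{(h)},\dots,g_k^{(h)})$ gives $\frac1{N-M}\sum_n\langle u_{n+h},u_n\rangle_\mu\to\prod_i\int g_i^{(h)}\,d\mu_i=\langle F\circ T^h,F\rangle_\nu$; combined with (iii)(b) for the first moments this yields $\lim_{N-M\to\infty}\frac1{N-M}\sum_n\langle v_{n+h},v_n\rangle_\mu=\langle F\circ T^h,F\rangle_\nu-|c|^2$. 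Averaging over $h$ and invoking the mean ergodic theorem for $(X^k,\nu,T)$, the Cesàro mean of the right-hand side is $\langle(P-P_0)F,F\rangle_\nu$, where $P$ is the orthogonal projection onto $T$-invariant functions and $P_0$ the projection onto constants; here condition (iii)(a) is exactly what forces $P=P_0$, so this mean is $0$. The van der Corput inequality then gives $\limsup_{N-M\to\infty}\|A_N-c\|_{L^2(\mu)}^2\le 0$, i.e. (i).

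For (iv)$\Rightarrow$(iii) I would argue with invariant measures. Along any Banach limit, $\Lambda(F):=\mathrm{LIM}\,\frac1{N-M}\sum_n\int F\circ T^n\Delta\,d\mu$ defines a positive linear functional with $\Lambda(1)=1$ that is $T$-invariant (the averages change by a boundary term $O(1/(N-M))$ under $F\mapsto F\circ T$), and by (iv)(b) it is dominated by $C\nu$ on product indicator sets. Hence $\Lambda$ is represented by a $T$-invariant probability measure absolutely continuous with respect to $\nu$; its Radon--Nikodym derivative is $T$-invariant, so ergodicity (iv)(a) forces it to be constant, whence $\Lambda(F)=\int F\,d\nu$. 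As the limit is independent of the Banach limit, the genuine limit exists and equals $\prod_i\int f_i\,d\mu_i$, which is (iii)(b); together with (iv)(a)=(iii)(a) this gives (iii). This closes the cycle together with the trivial (i)$\Rightarrow$(ii) and with (ii)$\Rightarrow$(iii), whose part (b) is the case $f_0\equiv1$.

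The step I expect to be the main obstacle is extracting product ergodicity (iii)(a) from the diagonal statement (ii) (equivalently, seeing why (ii) is not strictly weaker than (i)). The difficulty is genuine: the diagonal $\Delta(X)$ is $\nu$-null, so one cannot simply restrict an arbitrary $T$-invariant $L^2(\nu)$-function to the diagonal. The plan is to show that a non-constant $T$-invariant function produces a violation of (ii): if $P\neq P_0$, one locates an invariant function which is a (limit of) elementary tensors whose diagonal restriction is a well-defined nonzero element of $L^2(\mu)$, and testing (ii) against the conjugate of this restriction contradicts the predicted limit $\int F\,d\nu$ for its mean-zero part. Making this rigorous---controlling the diagonal restrictions of invariant functions using the equivalence $\mu_i\sim\mu$ together with uniform boundedness, so that $L^2(\mu)$-convergence, convergence in measure, and $L^2(\mu_i)$-convergence may be freely interchanged---is the technical crux, as is the uniformity in $h$ needed to justify the Cesàro/van der Corput interchange in the step (iii)$\Rightarrow$(i).
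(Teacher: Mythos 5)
Your architecture coincides with the paper's almost exactly: the trivial implications, the van der Corput argument driven by condition (iii)(b) together with the mean ergodic theorem for the product system, and the invariant-measure argument for (iv) $\Rightarrow$ (iii)(b). (The paper phrases the latter via weak* limit points of the measures $\nu_{M,N}(A_1\times\cdots\times A_k)=\frac{1}{N-M}\sum_{n=M}^{N-1}\mu\bigl(\bigcap_{i}T_i^{-n}A_i\bigr)$, which are invariant and, by (iv)(b), absolutely continuous with respect to $\mu_1\times\cdots\times\mu_k$, hence equal to it by ergodicity; your Banach-limit/Radon--Nikodym phrasing is the same argument. Also, your worry about ``uniformity in $h$'' is unnecessary: the van der Corput inequality used here has the inner $\limsup_{N-M\to\infty}$ taken for each fixed $h$ before the Ces\`aro average in $h$, so no interchange needs justification.) Those parts are correct.

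The genuine gap is exactly the step you flagged: (ii) $\Rightarrow$ (iii)(a), which you do need to close your cycle. Your sketch --- locate a non-constant invariant function that is a limit of elementary tensors and restrict it to the diagonal --- is not an argument: a general $T_1\times\cdots\times T_k$-invariant function has no usable restriction to the ($\nu$-null) diagonal, and it is unclear how approximation by elementary tensors would produce one. The paper closes this step with a structural input, Lemma~\ref{prop2.1} (Proposition 2.1 of Berend--Bergelson): a product of ergodic systems is ergodic if and only if the only multiplicative relation $\prod_{i}\lambda_i=1$ among eigenvalues $\lambda_i$ of the $T_i$ is the trivial one. This reduces a failure of (iii)(a) to an honest elementary tensor $f_1\otimes\cdots\otimes f_k$ of unimodular eigenfunctions, whose ``diagonal restriction'' $\prod_i f_i(x)$ is a perfectly well-defined unimodular function on $X$. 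Concretely: one first checks that (ii) forces each $T_i$ to be ergodic (test with $f_0=1_{A^c}$, $f_i=1_A$ for a $T_i$-invariant $A$ of intermediate measure); then, given $T_if_i=\lambda_if_i$ with $|f_i|=1$, $\prod_i\lambda_i=1$ and some $\lambda_j\neq1$ (whence $\int f_j\,d\mu_j=0$ by ergodicity), testing (ii) with $f_0=\overline{\prod_i f_i}$ yields $1=0$. Without this ingredient, or an equivalent substitute, your proof is incomplete.
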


Theorem \ref{Intro:ThmJENS} will allow us to prove joint ergodicity for a rather wide class of interval maps which we will presently introduce. This class, which we will denote by $\mathcal{T}$, contains, in particular, ergodic interval exchange transformations as well as $T_G$ and $T_{\beta}$ for $\beta > 1$. 

Let $X=[0,1]$ and let $\lambda$ be the Lebesgue measure on $(X, \mathcal{B})$, where $\mathcal{B}$ is the Borel $\sigma$-algebra.
The class $\mathcal{T}$ is comprised of measure preserving systems $(X, \mathcal{B}, \mu, T)$, where $\mu$ is a probability measure on $X = [0, 1]$ such that for some $c \geq 1$, $\frac{1}{c} \mu (A) \leq \lambda (A) \leq c \mu (A)\,  \text{for all } A \in \mathcal{B}$, and $T: [0,1] \rightarrow [0,1]$ is a $\mu$-ergodic\footnote{Clearly, in light of our assumptions, such $T$-invariant measure $\mu$ is unique.}, piecewise monotone map, which is naturally equipped with a (finite or countable) partition  $\mathcal{A}$ of $[0,1]$ into the intervals with disjoint interiors such that for any interval $I \in \mathcal{A}$, the map $T|_{\text{int}(I)}$ is continuous and strictly monotone.
In addition, we will assume that 
\begin{enumerate}
\item $\mathcal{B} = \bigvee_{j=0}^{\infty} T^{-j} \sigma( \mathcal{A}) \mod \lambda$, 
where $\sigma (\mathcal{A})$ denotes the sub-$\sigma$-algebra generated by $\mathcal{A}$. 
( i.e., partition $\mathcal{A}$ generates the $\sigma$-algebra $\mathcal{B}$.) 

\item Entropy of the partition $\mathcal{A}$ is finite: $H(\mathcal{A}) = - \sum_{I \in \mathcal{A}} \mu(I) \log \mu (I) < \infty$. 
\end{enumerate}
For convenience, we will be denoting members of $\mathcal{T}$ by $(T, \mu, \mathcal{A})$.


\begin{Remark}
If a measure preserving transformation $T$ on a Lebesgue space has zero entropy, then it is invertible (see Corollary 4.14.3 in \cite{Walt}.) On the other hand, an invertible transformation with $\mathcal{B} = \bigvee_{j=0}^{\infty} T^{-j} \sigma( \mathcal{A})$ and $H(\mathcal{A}) < \infty$ has zero entropy (see Corollary 4.18.1 and Remarks on p.96 in \cite{Walt}.) Thus $(T, \mu, \mathcal{A}) \in \mathcal{T}$ is invertible if and only if it has zero entropy.  
\end{Remark}



We provide now examples of transformations belonging (along with appropriate partitions) to the class $\mathcal{T}$. (Note that these examples include the transformations appearing in Theorem \ref{Int:Thm:1.2} above.)

\begin{enumerate}
\item  Ergodic interval exchange transformations. 
Recall that for a fixed permutation $\pi$ of $1, 2, \dots, n$ and a probability vector $\mathbf{a} = (a_1, \dots, a_n)$,
an $(\mathbf{a}, \pi)$-interval exchange transformation is given by 
\[ T_{\mathbf{a}, \pi} (x) = x - \beta_{i-1} (\mathbf{a}) + \beta_{\pi(i-1)} (\mathbf{a}^{\pi}), \quad x \in I_i,\]
where $I_i, (1 \leq i \leq n)$ forms a partition of the interval $[0,1]$.
Thus, any ergodic interval exchange transformation $T_{\mathbf{a}, \pi}$ has a natural partition $\mathcal{A}_{T_{\mathbf{a}, \pi}}$ such that $(T_{\mathbf{a}, \pi}, \lambda, \mathcal{A}_{T_{\mathbf{a}, \pi}}) \in \mathcal{T}$. 

\item Times $b$ maps $T_b x = bx \, \bmod 1$ ($b \in \mathbb{N}$, $b \geq 2$):
\begin{equation*}
\mathcal{A}_b = \left\{ \left[0, \frac{1}{b} \right), \cdots, \left[ \frac{b-1}{b}, 1 \right)  \right\}. 
\end{equation*}
\item $\beta$-transformations $T_{\beta} x = \beta x \, \bmod  \, 1$ ($\beta >1$, $\beta \notin \mathbb{N}$):
\begin{equation*} 
\mathcal{A}_{\beta} = \left\{ \left[ 0, \frac{1}{\beta} \right), \cdots, \left[ \frac{[\beta]-1}{\beta}, \frac{[\beta]}{\beta} \right),  \left[ \frac{[\beta]}{\beta}, 1 \right)  \right\}.
\end{equation*}

\item Gauss map $T_G x = \frac{1}{x} \, \bmod \, 1$:
\begin{equation*}
\mathcal{A}_G = \left\{  \left( \frac{1}{n+1}, \frac{1}{n} \right]: n \in \mathbb{N} \right\}.
\end{equation*}
\end{enumerate}


We are going now to formulate a general result, which provides convenient sufficient conditions for joint ergodicity of transformations belonging to class $\mathcal{T}$. But first we need to introduce one more definition.
\begin{Definition}
We will say that a system $(T, \mu, \mathcal{A})$ belonging to $\mathcal{T}$ has {\em property B} if there exist a constant $c > 0$ and a sequence $(b_n)$ of positive real numbers with $b_n \rightarrow 0$ such that  for any natural number $l$ and for any non-negative integer $n$,  
if $A \in \sigma\left(\bigvee_{i=0}^{l-1} T^{-i} \mathcal{A} \right), B \in \mathcal{B}$, then
\[ \mu (A \cap T^{-(n+l)} B) \leq c \mu(A) \mu(B) + b_n.\]
\end{Definition}


We are now in position to formulate one of the main results of this paper.

\begin{Theorem}[Theorem \ref{thm:L^2_joint}]
\label{Intro:thm:L^2_joint}
For $i = 0, 1, 2, \dots, k$, let $\mu_i$ be a probability measure on the measurable space $([0,1], \mathcal{B})$ and let  $T_i:[0,1] \rightarrow [0,1]$ be $\mu_i$-preserving ergodic transformations such that
\begin{enumerate}
\item for $i =0, 1, \dots, k$, there is a partition $\mathcal{A}_i$ with  $(T_i, \mu_i, \mathcal{A}_i) \in \mathcal{T}$,
\item for $i = 1, 2, \dots, k$, $(T_i, \mu_i, \mathcal{A}_i)$ satisfies property B,
\item $h(T_0) < h(T_1) < \cdots < h(T_k)$, where $h(T_i)$ denotes the entropy of $T_i$.
\end{enumerate} 
Suppose that $T_0 \times T_1 \times \cdots \times T_k$ is ergodic on $(X^{k+1}, \otimes_{i=0}^k \mu_i)$. 
Then for any probability measure $\nu$, which is equivalent to the Lebesgue measure $\lambda$,
transformations $T_0, T_1, \dots, T_k$ are uniformly jointly ergodic with respect to $(\nu; \mu_0, \mu_1, \dots, \mu_k)$: 
for any $f_0, f_1, \dots, f_k \in L^{\infty}$, 
\begin{equation*}
    \lim\limits_{N -M \rightarrow \infty} \frac{1}{N-M } \sum\limits_{n=M}^{N-1}  T_0^{n} f_0 \cdot T_1^n f_1 \cdots T_k^n f_k = \prod_{i=0}^k \int f_i \, d \mu_i \quad \text{ in } L^2(\nu).
\end{equation*}
\end{Theorem}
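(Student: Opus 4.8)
The plan is to reduce the statement, via the equivalence of reference measures in Remark \ref{equivalentmeasures}, to uniform joint ergodicity with respect to $(\lambda; \mu_0, \dots, \mu_k)$, and then to verify the criterion of Theorem \ref{Intro:ThmJENS} applied to the family $T_0, \dots, T_k$. The hypothesis that $T_0 \times \cdots \times T_k$ is ergodic on $\otimes_{i=0}^k \mu_i$ is exactly condition (iv)(a), so the whole task reduces to establishing the uniform correlation bound (iv)(b): there is $C > 0$ with
\[ \limsup_{N-M \to \infty} \frac{1}{N-M} \sum_{n=M}^{N-1} \lambda\Big( \bigcap_{i=0}^k T_i^{-n} A_i \Big) \le C \prod_{i=0}^k \mu_i(A_i) \]
for all $A_0, \dots, A_k \in \mathcal{B}$. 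The constant $C$ need not be sharp: once (iv)(a) and (iv)(b) hold, Theorem \ref{Intro:ThmJENS} upgrades this to the exact product $\prod_i \int f_i \, d\mu_i$. First I would reduce to the case where each $A_i$ is a finite union of atoms of $\bigvee_{j=0}^{L-1} T_i^{-j} \mathcal{A}_i$: if $\mu_i(A_i \triangle A_i') < \epsilon$, then, since $\lambda \le c\,\mu_i$ and $\mu_i$ is $T_i$-invariant, $\lambda\big(T_i^{-n}(A_i \triangle A_i')\big) \le c\,\mu_i(A_i \triangle A_i') < c\epsilon$ \emph{uniformly in} $n$, so the Cesàro averages differ by at most a fixed multiple of $\epsilon$, and letting $\epsilon \to 0$ preserves the bound with the same $C$.

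I would then prove (iv)(b) by \emph{induction on the number of maps, peeling off the transformation of highest entropy}. Using $\lambda \le c\,\mu_k$, bound $\lambda\big(\bigcap_{i=0}^k T_i^{-n} A_i\big) \le c\,\mu_k\big(C_n \cap T_k^{-n} A_k\big)$, where $C_n := \bigcap_{i=0}^{k-1} T_i^{-n} A_i$. To apply property B for $T_k$, whose hypotheses require the first set to be a $T_k$-cylinder, I invoke the key lemma below: the strict entropy gap $h(T_{k-1}) < h(T_k)$ lets one approximate $C_n$ in $\mu_k$-measure by a set $\tilde C_n \in \sigma\big(\bigvee_{j=0}^{l(n)-1} T_k^{-j} \mathcal{A}_k\big)$ with $l(n) < n$, $n - l(n) \to \infty$, and $\mu_k(C_n \triangle \tilde C_n) \to 0$. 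Writing $n = l(n) + m(n)$ with $m(n) = n - l(n) \to \infty$ and using property B,
\[ \mu_k\big( \tilde C_n \cap T_k^{-n} A_k \big) = \mu_k\big( \tilde C_n \cap T_k^{-(l(n)+m(n))} A_k \big) \le c\,\mu_k(\tilde C_n)\,\mu_k(A_k) + b_{m(n)} . \]
Averaging over $M \le n < N$, the contributions of $\mu_k(C_n \triangle \tilde C_n)$ and of $b_{m(n)}$ are Cesàro-negligible because both tend to $0$, leaving $\limsup \frac{1}{N-M} \sum_n \mu_k\big(C_n \cap T_k^{-n} A_k\big) \le c\,\mu_k(A_k)\,\limsup \frac{1}{N-M} \sum_n \mu_k(C_n)$. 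Since $\mu_k \le c'\lambda$, we have $\mu_k(C_n) \le c'\,\lambda\big(\bigcap_{i=0}^{k-1} T_i^{-n} A_i\big)$, which is precisely the correlation average for the smaller family $T_0, \dots, T_{k-1}$, whose top map $T_{k-1}$ again satisfies property B and the entropy gap. The induction therefore closes, the base case $\limsup \frac{1}{N-M} \sum_n \lambda(T_0^{-n} A_0) \le c\,\mu_0(A_0)$ being immediate from $\mu_0$-invariance of $T_0$.

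The main obstacle is the key lemma, i.e.\ approximating the low-entropy conjunction $C_n$ by a $T_k$-cylinder of length $l(n)$ with gap $n - l(n) \to \infty$; this is where the entropy hypothesis does the real work. Because $(T_k, \mu_k, \mathcal{A}_k) \in \mathcal{T}$, the atoms of $\bigvee_{j=0}^{l-1} T_k^{-j} \mathcal{A}_k$ are, up to a set of small measure (by Shannon--McMillan--Breiman together with bounded distortion), intervals of length $\asymp e^{-l\,h(T_k)}$; dually, by Rokhlin's formula (Theorem \ref{Rokhlin}) $h(T_i) = \int \log|T_i'| \, d\mu_i$, so $T_i^{n}$ expands typical scales by $\asymp e^{\,n\,h(T_i)}$. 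Hence a $T_k$-cylinder of length $l$ pins $x$ down finely enough to determine, for every $i < k$, the symbols of its $T_i$-itinerary in the window $[n, n+L)$ — and thus the indicator of $C_n$ — as soon as $l\,h(T_k) \gtrsim (n+L)\max_{i<k} h(T_i) = (n+L)\,h(T_{k-1})$. Taking $l(n) = \lceil \theta n \rceil$ with $\frac{h(T_{k-1})}{h(T_k)} < \theta < 1$ then forces $l(n) < n$ and $n - l(n) \to \infty$, as needed. Making this rigorous — controlling the non-uniformity of the distortion, the SMB exceptional sets, and upgrading the ``typical $x$'' statement to an $L^1(\mu_k)$ approximation valid for a full-density set of $n$ — is the technical heart of the argument, and it is exactly here that the \emph{strictness} of $h(T_{k-1}) < h(T_k)$ is indispensable. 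This also explains why equal-entropy coincidences (such as $\log\beta = \tfrac{\pi^2}{6\log 2}$) must be excluded.
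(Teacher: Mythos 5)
Your proposal is correct in outline and rests on the same two pillars as the paper's own proof: the reduction, via Corollary \ref{Cor:L2_joint}, to the single bound $\limsup_{n}\lambda\big(\bigcap_{i=0}^{k}T_i^{-n}A_i\big)\le C\prod_{i=0}^{k}\mu_i(A_i)$ for cylinder sets $A_i$, and an entropy-gap covering lemma that rewrites the intersection of the lower-entropy preimages as (approximately) a union of cylinders of the top map of rank $l(n)<n$ with $n-l(n)\to\infty$, so that property B can be applied. The organization is reversed, however: you peel off the highest-entropy map and recurse downward, whereas the paper's Lemma \ref{sec4:lem1} builds the intersection up from $T_0$, maintaining at stage $q$ a representation of $\bigcap_{i\le q}T_i^{-n}A_i$ as a union of ``good'' $T_q$-cylinders of rank $n+l$ plus an $O(\epsilon+e^{-\gamma n})$ error, and then converting these (Lemma \ref{sec4:lem2}) into good $T_{q+1}$-cylinders of rank $n-[\delta n]$. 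The bottom-up order has the mild advantage that the covering step only ever compares one partition against the next; your top-down version must cover $\bigcap_{i=0}^{k-1}T_i^{-n}A_i$, whose constituent intervals have endpoints coming from $k$ different partitions. This still works, because the number of such intervals is dominated by the highest-entropy factor below $k$, namely $O\big(e^{(n+L)(h_{k-1}+\epsilon)}\big)$, but that count needs to be stated and used explicitly.

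Two points in your sketch of the key lemma need repair. First, you justify the scale comparison via Rokhlin's formula and the heuristic that $T_i^{n}$ expands typical scales by $e^{nh(T_i)}$; members of $\mathcal{T}$ are only assumed piecewise continuous and strictly monotone, so no derivative (and no bounded distortion) is available in general. The correct and sufficient input is Shannon--McMillan--Breiman (Corollary \ref{entropy:intervals}) combined with $\frac{1}{c}\mu_i\le\lambda\le c\mu_i$: good atoms of $(\mathcal{A}_k)^{l}$ have length at most $ce^{-l(h_k-\epsilon)}$, the good part of $C_n$ consists of at most $O\big(e^{(n+L)(h_{k-1}+\epsilon)}\big)$ intervals, hence the $T_k$-cylinders straddling a boundary of $C_n$ have total measure $O(e^{-\gamma n})$ once $l(n)(h_k-\epsilon)-(n+L)(h_{k-1}+\epsilon)>\gamma n$; this is exactly the computation in Lemma \ref{sec4:lem2}. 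Second, the bad atoms only satisfy $\mu_i\big(\bigcup_{B\in\mathfrak{b}_i^{n}}B\big)\le\epsilon$ for $n$ large, not $o(1)$, so for a fixed construction you do not literally get $\mu_k(C_n\triangle\tilde C_n)\to 0$; you should instead carry an $O(\epsilon)$ term through the Ces\`aro $\limsup$ and send $\epsilon\to 0$ only at the very end (or diagonalize over $\epsilon$). Neither issue is fatal, but both must be addressed for the argument to close.
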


We illustrate Theorem \ref{Intro:thm:L^2_joint} by the following corollary, which includes Theorem \ref{Int:Thm:1.2} as a special case.
\begin{Theorem}
\label{Intro-Thm-JE-Entropy2}
Let $T_0$ be an ergodic interval exchange transformation. Let $\beta_1 , \cdots , \beta_k$ be distinct real numbers with $\beta_i >1$. We assume that $\log \beta_i \ne \frac{\pi^2}{6 \log 2}$ for all $i$. Then for any $f_{0}, f_1, f_2, \dots, f_{k+1} \in L^{\infty} (\lambda)$, 
\begin{equation*}
\begin{split} 
\lim\limits_{N -M \rightarrow \infty} \frac{1}{N -M } \sum\limits_{n=M}^{N-1} 
&  T_0^n f_{0} \cdot T_{\beta_1}^n f_{1} \cdots T_{\beta_k}^n f_{k}  \cdot T_G^n f_{k+1} \\
 &= \int f_{0} \, d \lambda  \cdot \prod_{i=1}^k \int f_{i} \, d \mu_{\beta_i} \cdot  \int f_{k+1} \, d \mu_G \quad \text{in } L^{2}(\nu),
\end{split}
\end{equation*}
where $\nu$ is any probability measure equivalent to the Lebesgue measure $\lambda$.
\end{Theorem}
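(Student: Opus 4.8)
The plan is to deduce Theorem \ref{Intro-Thm-JE-Entropy2} directly from the general joint ergodicity criterion in Theorem \ref{Intro:thm:L^2_joint}, so the real work is verifying the four hypotheses of that theorem for the specific family $T_0, T_{\beta_1}, \dots, T_{\beta_k}, T_G$. First I would relabel the transformations so that their entropies are strictly increasing, which is possible precisely because the hypotheses guarantee all the entropy values are distinct: the interval exchange $T_0$ has $h(T_0)=0$; each $\beta$-transformation has $h(T_{\beta_i})=\log\beta_i$ with the $\beta_i$ distinct and each $>1$, so these entropies are distinct and positive; and the Gauss map has $h(T_G)=\frac{\pi^2}{6\log 2}$, which by the standing assumption $\log\beta_i\ne\frac{\pi^2}{6\log 2}$ differs from every $\log\beta_i$. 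Reindexing the maps along a permutation that sorts these $k+2$ distinct entropy values yields exactly condition (3) of Theorem \ref{Intro:thm:L^2_joint}, and since the conclusion (a symmetric product of averages) is invariant under permuting the factors, proving the sorted version is equivalent to the stated one.

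Next I would check conditions (1) and (2). Condition (1) is already recorded in the list of examples following Theorem \ref{Intro:ThmJENS}: ergodic interval exchanges (with their natural partition), the maps $T_\beta$ (with $\mathcal{A}_\beta$), and the Gauss map (with $\mathcal{A}_G$) all belong to $\mathcal{T}$, with the required generating property and finite partition entropy. For condition (2), which asks for property B for every map of positive entropy (all of them except the interval exchange, whose role as the lowest-entropy map $T_0$ exempts it), I would establish property B for $T_\beta$ and for $T_G$. The natural route is to use the known strong mixing / exponential-type decay of correlations for these piecewise expanding maps: for a set $A$ measurable with respect to $\bigvee_{i=0}^{l-1}T^{-i}\mathcal{A}$ and any $B\in\mathcal{B}$, one bounds $\mu(A\cap T^{-(n+l)}B)$ by $c\,\mu(A)\mu(B)+b_n$ with $b_n\to 0$, exploiting the bounded-distortion and Renyi-type density bounds for $\mu_\beta$ and the classical decay estimates for the Gauss measure $\mu_G$. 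I expect these property-B verifications to be carried out once and for all at the level of the class $\mathcal{T}$ earlier in the paper, so here I would simply invoke them.

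The remaining and genuinely substantive hypothesis is the ergodicity of the full product system $T_0\times T_{\beta_1}\times\cdots\times T_{\beta_k}\times T_G$ on the product space with the product measure $\lambda\otimes\mu_{\beta_1}\otimes\cdots\otimes\mu_{\beta_k}\otimes\mu_G$. This is where I expect the main obstacle to lie, since ergodicity of a product is a joint, not individual, property. The strategy I would adopt is to exploit the sharp disparity of the entropies: weakly mixing or $K$-type mixing behavior of the higher-entropy maps should allow one to peel off factors one at a time. Concretely, the $\beta$-transformations and the Gauss map are exact (hence mixing of all orders and $K$-systems), while the interval exchange has zero entropy and discrete spectrum behavior; a product of a $K$-system with any ergodic system of strictly smaller entropy remains ergodic, and more generally a product of systems whose spectral/mixing properties are sufficiently independent is ergodic. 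The cleanest formulation is that mixing of the top-entropy factor against the joining of the lower-entropy factors forces triviality of invariant sets; one proves this inductively, using at each stage that $T_i$ is mixing and that the partial product below it is already ergodic, so that any invariant function for the full product is constant in the top coordinate, reducing to the product of the remaining factors. I anticipate that the paper supplies an abstract disjointness-type or mixing-based lemma (ergodicity of products under entropy gaps) that discharges this step; I would cite that lemma and verify its hypotheses for the concrete maps at hand, after which Theorem \ref{Intro:thm:L^2_joint} immediately delivers the desired convergence in $L^2(\nu)$ for every $\nu$ equivalent to $\lambda$.
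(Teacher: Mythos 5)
Your proposal is correct and follows the same route as the paper: the theorem is deduced directly from Theorem \ref{Intro:thm:L^2_joint} by checking membership in $\mathcal{T}$, property B (via the $\alpha$-mixing of $T_{\beta_i}$ and $T_G$ established in Section \ref{sec:examples}), the distinctness of the entropies $0$, $\log\beta_i$, $\frac{\pi^2}{6\log 2}$, and ergodicity of the product. One caution on the last step: the general principle you invoke, that a $K$-system is disjoint from (or has ergodic product with) any ergodic system of \emph{strictly smaller entropy}, is not true as stated --- $K$-systems are disjoint only from zero-entropy systems, and two Bernoulli factors of different entropies are certainly not disjoint. Fortunately no entropy-gap argument is needed here: each of $T_{\beta_1},\dots,T_{\beta_k},T_G$ is exact, hence weakly mixing, so the eigenvalue criterion of Lemma \ref{prop2.1} (equivalently, the standard fact that the product of a weakly mixing system with an ergodic system is ergodic, applied inductively) gives ergodicity of the full product at once; your inductive ``peel off the top factor'' argument is a correct instance of this, but the surrounding entropy/disjointness framing should be dropped.
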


Since many transformations belonging to $\mathcal{T}$ are strongly mixing\footnote{A measure preserving transformation $T$ is called
 {\em strongly mixing} if for any $A, B \in \mathcal{B}$, 
        \[ \lim_{n \rightarrow \infty}  \mu(A \cap T^{-n} B) = \mu(A) \mu(B).\]}, it makes sense  also to introduce the notion of joint mixing. 
\begin{Definition}
\label{Int:Def:JM}
Let $(X, \mathcal{B}, \mu)$ be a probability space. Let $T_1, \dots, T_k$ be measurable transformations on $(X, \mathcal{B})$ such that for each $i$, there is a $T_i$-invariant probability measure $\mu_i$, which is equivalent to $\mu$.
Transformations $T_1, \dots, T_k$ are said to be {\em jointly mixing with respect to $(\mu; \mu_1, \dots, \mu_k)$} if for any $A_0, \dots, A_k \in \mathcal{B}$,
\begin{equation*}
 \lim_{n \rightarrow \infty} \mu (A_0 \cap T_1^{-n}A_1 \cap \cdots \cap T_k^{-n}A_k) =  \mu (A_0) \prod_{i=1}^k \mu_i(A_i).
 \end{equation*}
\end{Definition}

\begin{Remark}
\label{rem:mixje}
Joint mixing implies uniform joint ergodicity.
Indeed it is not hard to see that if transformations $T_1, \dots, T_k$ are jointly mixing, then they satisfy the condition $(iv)$ of Theorem \ref{Intro:ThmJENS}. 
\end{Remark}

\begin{Remark}
It is not hard to see that if $\mu$ and $\nu$ are equivalent, then 
$T_1, \dots, T_k$ are jointly mixing with respect to  $(\mu; \mu_1, \dots, \mu_k)$ if and only if $T_1, \dots, T_k$ are jointly mixing with respect to  $(\nu; \mu_1, \dots, \mu_k)$ (see Lemma \ref{Lem:Conv:Weak}). 
\end{Remark}

In addition, we also obtain a sufficient condition for joint mixing of transformations in $\mathcal{T}$. To formulate this result, we need M. Rosenblatt's notion of $\alpha$-mixing {\cite{Ro}}, which is somewhat stronger than property B.

\begin{Definition}
$(T, \mu, \mathcal{A}) \in \mathcal{T}$ is {\em $\alpha$-mixing}
if there exists a sequence $\alpha(n), n=1, 2, \dots,$ of non-negative real numbers with $\lim\limits_{n \rightarrow \infty} \alpha(n) = 0$  such that for any natural number $l$, for any non-negative integer $n$, if $A \in \sigma \left( \bigvee_{j=0}^{l-1} T^{-j} \mathcal{A} \right)$ and  $B \in \mathcal{B}$,
\begin{equation*} 
| \mu (A \cap T^{-(n+l)} B) - \mu(A) \mu(B)| \leq \alpha(n).
\end{equation*}
\end{Definition}

Examples of transformations with $\alpha$-mixing include $T_G$ and $T_{\beta}$ $(\beta >1)$ (see Section  \ref{sec:examples}.)

\begin{Remark}
If $(T,\mu, \mathcal{A}) \in \mathcal{T}$ is $\alpha$-mixing, then it is {\em exact} (see Theorem \ref{exactness:new}). Thus $h(T_i) > 0$ for all $i = 1, 2, \dots, k$ in the following theorem.
\end{Remark}

\begin{Theorem}[Theorem \ref{Thm:JM2:new}]
\label{Int:Thm:JM2:new}
For $i = 0, 1, 2, \dots, k$, let $\mu_i$ be a probability measure on the measurable space $([0,1], \mathcal{B})$ and let  $T_i:[0,1] \rightarrow [0,1]$ be $\mu_i$-preserving ergodic transformations such that
\begin{enumerate}
\item for $i = 0, 1, 2, \dots, k$, there is a partition $\mathcal{A}_i$ with  $(T_i, \mathcal{A}_i, \mu_i) \in \mathcal{T}$,
\item for $i = 1, 2, \dots, k$, $(T_i, \mathcal{A}_i, \mu_i)$ is $\alpha$-mixing,
\item $0 \leq h(T_0) < h(T_1) < \cdots < h(T_k) < \infty$, where $h(T_i)$ denotes the entropy of $T_i$.
\end{enumerate} 
 Suppose that $T_0$ is mixing. Then for any probability measure $\nu$, which is equivalent to $\lambda$, $T_0, T_1, \dots, T_k$ are jointly mixing: for any $B, A_0, \dots, A_k \in \mathcal{B}$,
\begin{equation*}
 \lim_{n \rightarrow \infty} \nu ( B \cap T_0^{-n} A_0 \cap T_1^{-n}A_1 \cap \cdots \cap T_k^{-n}A_k) =   \nu (B) \prod_{i=0}^k \mu_i(A_i).
\end{equation*}
\end{Theorem}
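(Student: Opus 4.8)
The plan is to argue by induction on $k$, peeling off the highest-entropy transformation $T_k$ at each step and using the strict entropy gap $h(T_{k-1}) < h(T_k)$ together with the $\alpha$-mixing of $T_k$ to decouple the factor $T_k^{-n}A_k$ from the remaining sets. First I record two reductions. Since each $\mu_i$ is boundedly equivalent to $\lambda$, all the measures $\nu, \lambda, \mu_0, \dots, \mu_k$ are mutually boundedly equivalent, so a set is small in one of them iff it is small in all of them (uniformly), and any $L^1$/approximation estimate proved for one reference measure transfers to $\nu$. By Lemma \ref{Lem:Conv:Weak} it therefore suffices to work with whichever equivalent reference measure is convenient. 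For the base case $k=0$ the desired statement $\nu(B \cap T_0^{-n}A_0) \to \nu(B)\mu_0(A_0)$ is exactly the mixing of $T_0$ transported from $\mu_0$ to $\nu$ via Lemma \ref{Lem:Conv:Weak}.

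For the inductive step, assume the conclusion for $k-1$ transformations (the family $T_0,\dots,T_{k-1}$ satisfies all the hypotheses, with the same reference $\nu$). Put $E_n = B \cap \bigcap_{i=0}^{k-1} T_i^{-n}A_i$, so that by the induction hypothesis $\nu(E_n) \to \nu(B)\prod_{i=0}^{k-1}\mu_i(A_i)$. Thus everything reduces to the decoupling estimate
\[ \nu\bigl(E_n \cap T_k^{-n}A_k\bigr) - \mu_k(A_k)\,\nu(E_n) \longrightarrow 0 . \]
The whole weight of the argument sits in approximating $E_n$ by a set lying in boundedly many levels of the $T_k$-filtration, with the gap to level $n$ tending to infinity.

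The heart of the proof is the following approximation claim, which is where the strict entropy inequality is indispensable: \emph{if $(S,\mu_S,\mathcal{A}_S),(T,\mu_T,\mathcal{A}_T) \in \mathcal{T}$ with $h(S) < h(T)$, then for every $A \in \mathcal{B}$ and $\epsilon > 0$ there exist $\rho \in (0,1)$ and, for all large $n$, sets $W_n \in \sigma\bigl(\bigvee_{j=0}^{\lceil \rho n\rceil -1} T^{-j}\mathcal{A}_T\bigr)$ with} $\mu\bigl(S^{-n}A \,\triangle\, W_n\bigr) < \epsilon$. I would prove it as follows. Approximate $A$ by a finite union $A'$ of level-$m$ $S$-cylinders; since $S$ is piecewise monotone, every atom of $\bigvee_{j} S^{-j}\mathcal{A}_S$ is an interval, so $S^{-n}A'$ is a finite union of intervals. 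Using the Shannon--McMillan--Breiman theorem for $S$ (applicable because $\mathcal{A}_S$ generates $\mathcal{B}$ and $H(\mathcal{A}_S) < \infty$), discard the negligible mass outside the ``good'' level-$(n+m)$ $S$-cylinders; the number of good cylinders meeting $S^{-n}A'$ is at most $e^{(n+m)(h(S)+\delta)}$, so the remaining set is a union of at most that many intervals, with an endpoint set $P$ of comparable cardinality. Now approximate this union of intervals from inside by level-$L$ $T$-cylinders with $L = \lceil \rho n\rceil$: the symmetric-difference error is carried by the $T$-cylinders straddling $P$, whose total $\mu_T$-measure is at most $2|P|\,e^{-L(h(T)-\delta)} + o(1)$, the $o(1)$ being the (SMB-small) mass of the atypically large level-$L$ $T$-cylinders. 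This bound tends to $0$ as soon as
\[ \rho > \frac{h(S)+\delta}{h(T)-\delta}, \]
which is achievable with $\rho < 1$ precisely because $h(S) < h(T)$ (and $h(T) < \infty$). This counting/boundary estimate is the main obstacle, and the entropy gap is exactly what makes the number of low-entropy intervals grow slowly enough to be resolved by fewer than $n$ levels of the high-entropy partition.

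Finally I would assemble the decoupling. Applying the claim to each $T_i$ ($i < k$) against $T = T_k$ with a common $\rho < 1$ (the maximum of the finitely many thresholds), and approximating $B$ by a set in boundedly many $T_k$-levels, I obtain $E_n \approx E_n' \in \sigma\bigl(\bigvee_{j=0}^{L_n-1} T_k^{-j}\mathcal{A}_k\bigr)$ with $L_n = \lceil \rho n\rceil$, so the gap $n - L_n \to \infty$. The $\alpha$-mixing of $T_k$ then gives, for any nonnegative bounded past-measurable $f$ and any future set $G = T_k^{-n}A_k$, the covariance bound $\bigl|\int f\,\mathbf{1}_G\,d\mu_k - (\int f\,d\mu_k)\,\mu_k(A_k)\bigr| \le \|f\|_\infty\,\alpha(n-L_n)$ (by the layer-cake decomposition of $f$ into the sets $\{f>t\} \in \sigma(\bigvee_{j=0}^{L_n-1}T_k^{-j}\mathcal{A}_k)$ and the defining inequality of $\alpha$-mixing). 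Taking $f = \mathbf{1}_{E_n'}\,\phi_{L_n}$, where $\phi = d\nu/d\mu_k$ is bounded and $\phi_{L_n} = \mathbb{E}_{\mu_k}[\phi \mid \bigvee_{j=0}^{L_n-1}T_k^{-j}\mathcal{A}_k] \to \phi$ in $L^1(\mu_k)$ by the martingale theorem, absorbs the Radon--Nikodym density into the reference change and yields $\nu(E_n' \cap T_k^{-n}A_k) = \mu_k(A_k)\,\nu(E_n') + o(1)$. Combining with $\nu(E_n \,\triangle\, E_n') < C\epsilon$ and then letting $\epsilon \to 0$ proves the decoupling, which together with the induction hypothesis completes the proof; the density bookkeeping in this last paragraph is routine, the one genuinely new input being the entropy-counting approximation lemma above.
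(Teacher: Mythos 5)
Your proposal is correct and follows essentially the same route as the paper's proof: induction on $k$, a Shannon--McMillan--Breiman entropy-counting approximation that places the lower-entropy part of the intersection inside roughly $\rho n$ levels ($\rho<1$) of the $T_k$-filtration, and then $\alpha$-mixing across the diverging gap $n-\lceil \rho n\rceil$. The only differences are organizational: you approximate each $T_i^{-n}A_i$ separately and absorb $d\nu/d\mu_k$ by martingale convergence, whereas the paper first reduces to $\nu=\mu_{q+1}$ via Lemma \ref{Lem:Conv:Weak} and carries the whole intersection through a single cylinder decomposition (Lemma \ref{lem:sec5:new}).
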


The following theorem is an immediate corollary of Theorem \ref{Int:Thm:JM2:new}.
\begin{Theorem}
Let $\beta_1 , \cdots , \beta_s$ be distinct real numbers with $\beta_i >1$. We assume that $\log \beta_i \ne \frac{\pi^2}{6 \log 2}$ for all $i$. Then for any measurable sets $ B, A_0, A_1 \dots, A_s$, 
\begin{equation*}
\lim\limits_{n \rightarrow \infty} \nu (B \cap T_G^{-n} A_0 \cap T_{\beta_1}^{-n} A_1 \cap \cdots \cap T_{\beta_s}^{-n} A_s) = \nu (B) \, \mu_G (A_0) \, \prod_{i=1}^s \mu_{\beta_i} (A_i), 
\end{equation*}
where $\nu$ is any probability measure equivalent to the Lebesgue measure $\lambda$.
\end{Theorem}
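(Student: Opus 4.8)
The plan is to obtain this statement as a direct specialization of Theorem \ref{Int:Thm:JM2:new}, applied to the family consisting of the Gauss map together with the $\beta$-transformations $T_{\beta_1}, \dots, T_{\beta_s}$, after reindexing these $s+1$ maps according to increasing entropy. First I would collect the relevant structural facts. Each of $T_G, T_{\beta_1}, \dots, T_{\beta_s}$ acts on $[0,1]$, is ergodic with respect to an invariant probability measure ($\mu_G$ for the Gauss map, $\mu_{\beta_i}$ for $T_{\beta_i}$) equivalent to $\lambda$, and belongs, with its natural partition ($\mathcal{A}_G$, respectively $\mathcal{A}_{\beta_i}$), to the class $\mathcal{T}$, as recorded in the examples above. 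Moreover every one of these maps is $\alpha$-mixing (see Section \ref{sec:examples}), and every $\alpha$-mixing transformation is exact and hence in particular mixing.

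Next I would record the entropy values $h(T_G) = \frac{\pi^2}{6\log 2}$ and $h(T_{\beta_i}) = \log \beta_i$. Since the $\beta_i$ are distinct, the numbers $\log\beta_1, \dots, \log\beta_s$ are pairwise distinct, and the hypothesis $\log\beta_i \ne \frac{\pi^2}{6\log 2}$ guarantees that none of them equals $h(T_G)$. Hence the $s+1$ entropies are pairwise distinct, nonnegative, and finite, so I can relabel the transformations as $S_0, S_1, \dots, S_s$, with corresponding invariant measures $\nu_0, \dots, \nu_s$ and partitions, arranged so that $0 \le h(S_0) < h(S_1) < \cdots < h(S_s) < \infty$.

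With this labeling the hypotheses of Theorem \ref{Int:Thm:JM2:new} hold with $k = s$: each $(S_j, \mathcal{A}, \nu_j) \in \mathcal{T}$; for $j = 1, \dots, s$ the map $S_j$ is $\alpha$-mixing; the entropies are strictly increasing and finite; and $S_0$, being $\alpha$-mixing and therefore mixing, satisfies the mixing requirement imposed on the smallest-entropy map. Applying Theorem \ref{Int:Thm:JM2:new} yields, for any $B, C_0, \dots, C_s \in \mathcal{B}$ and any $\nu$ equivalent to $\lambda$, the identity $\lim_{n\to\infty} \nu\bigl(B \cap \bigcap_{j=0}^s S_j^{-n} C_j\bigr) = \nu(B)\prod_{j=0}^s \nu_j(C_j)$. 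Undoing the relabeling merely permutes the factors on the right-hand side and reproduces exactly the asserted formula, the set $B$ playing the role of the unshifted factor. As the result is a straightforward specialization, there is no genuine obstacle; the only points demanding care are the verification that all $s+1$ entropies are distinct (which is precisely where the conditions on the $\beta_i$ enter) and the remark that $\alpha$-mixing already supplies the mixing of the lowest-entropy map, so that the asymmetric roles of $T_0$ and $T_1, \dots, T_k$ in Theorem \ref{Int:Thm:JM2:new} present no difficulty.
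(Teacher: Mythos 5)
Your proposal is correct and follows exactly the route the paper intends: the paper presents this statement as an immediate corollary of Theorem \ref{Int:Thm:JM2:new}, obtained by noting that $T_G$ and the $T_{\beta_i}$ all belong to $\mathcal{T}$ with their natural partitions, are $\alpha$-mixing (hence exact and mixing), have entropies $\frac{\pi^2}{6\log 2}$ and $\log\beta_i$ which are pairwise distinct under the stated hypotheses, and can therefore be reordered by increasing entropy to satisfy the hypotheses of that theorem. The only details worth spelling out are the ones you already flag, namely the distinctness of all $s+1$ entropies and that the lowest-entropy map is mixing because it is $\alpha$-mixing.
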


The structure of this paper is as follows.
In Section \ref{sec:joint ergodicity}, we obtain necessary and sufficient conditions for uniform joint ergodicity, which are later used in Section \ref{sec:jointergodicityPM}.  
In Section \ref{sec:PM}, we provide some background material, which is needed for handling systems belonging to the class $\mathcal{T}$. 
In Section \ref{sec:examples}, we present numerous examples of transformations belonging to the class $\mathcal{T}$.
Section \ref{sec:jointergodicityPM} is devoted to the proof of Theorem \ref{Intro:thm:L^2_joint}, which is one of the main results in this paper.  
In Section \ref{sec:jointmixingPM}, we establish useful criteria for joint mixing (and joint weak mixing) for transformations belonging to the class $\mathcal{T}$ and obtain some interesting applications. 
In Section \ref{unifvsnon}, we introduce the non-uniform variant of joint ergodicity and provide some pertinent results and examples.
In Section \ref{sec:disjoint}, we establish additional results on joint ergodicity which utilize the notion of disjointness. 
In Section \ref{sec:final remark}, we discuss possible extensions of the results obtained in this paper, provide some relevant examples, and formulate some natural open questions. Finally, in Section \ref{Sec:App}, we provide a descriptive definition of rank one transformations and show that some of the well-known examples of rank one transformations belong to the class $\mathcal{T}$.

\begin{Remark}
Throughout this paper, we will be tacitly assuming that the probability spaces $(X, \mathcal{B}, \mu)$ that we are working with are regular meaning that $X$ is a compact metric space and $\mathcal{B}$ is the Borel $\sigma$-algebra of $X$. Note that this assumption can be made without loss of generality since every separable measure preserving system is equivalent to a regular one (see for instance, \cite[Proposition 5.3]{Furbook} ).
\end{Remark}

\section{Uniform joint ergodicity}
\label{sec:joint ergodicity}

The main goal of this section is to prove Theorem \ref{Intro:ThmJENS} from the Introduction, which establishes general necessary and sufficient conditions for uniform joint ergodicity. 
For reader's convenience, we formulate this theorem below.
Note that uniform joint ergodicity, which was defined in Definition \ref{Int:Def1.1} above, corresponds to condition (i).
\begin{Theorem}[Theorem \ref{Intro:ThmJENS} in Introduction]
\label{criterion:JE}
Let $(X, \mathcal{B}, \mu)$ be a probability space. Let $T_1, \dots, T_k$ be measurable transformations on $(X, \mathcal{B})$ such that for each $i$, there is a $T_i$-invariant probability measure $\mu_i$, which is equivalent to $\mu$.
The following conditions are equivalent:
\begin{enumerate}[(i)]
\item $T_1, \dots, T_k$ are uniformly jointly ergodic with respect to $(\mu; \mu_1, \dots, \mu_k)$: \\
for any bounded measurable functions $f_1, \dots f_k$, 
\begin{equation}
\label{JE:condition1} 
\lim_{N-M \rightarrow \infty} \frac{1}{N - M} \sum_{n=M}^{N-1} T_1^{n} f_1 \cdot T_2^n f_2 \cdots T_k^n f_k = \prod_{i=1}^k \int f_i \, d \mu_i \quad \text{ in } L^2(\mu). 
\end{equation} 
\item for any bounded measurable functions $f_0, f_1, \dots f_k$, 
\begin{equation}
\label{JE:cond2}
\lim_{N-M \rightarrow \infty} \frac{1}{N - M} \sum_{n=M}^{N-1} \int f_0 \cdot T_1^{n} f_1 \cdot T_2^n f_2 \cdots T_k^n f_k \, d \mu = \int f_0 \, d \mu \cdot \prod_{i=1}^k \int f_i \, d \mu_i. 
\end{equation} 
\item \begin{enumerate}
\item $T_1 \times T_2 \times \cdots \times T_k$ is ergodic with respect to $\mu_1 \times \mu_2 \times \cdots \times \mu_k$
\item for  any bounded measurable functions $f_1, \dots f_k$,
\begin{equation}
\lim_{N-M \rightarrow \infty} \frac{1}{N-M} \sum_{n=M}^{N-1}  \int T_1^{n} f_1 \cdot T_2^n f_2 \cdots T_k^n f_k \, d \mu = \prod_{i=1}^k \int f_i \, d \mu_i .
\end{equation}
\end{enumerate}
\item \begin{enumerate}
\item $T_1 \times T_2 \times \cdots \times T_k$ is ergodic with respect to $\mu_1 \times \mu_2 \times \cdots \times \mu_k$
\item there is $C>0$ such that for any $A_1, A_2, \dots, A_k \in \mathcal{B}$
\begin{equation}
\label{JE:condition2} 
\limsup_{N-M \rightarrow \infty} \frac{1}{N-M} \sum_{n=M}^{N-1} \mu (T_1^{-n} A_1 \cap T_2^{-n} A_2 \cap \cdots \cap T_k^{-n} A_k) \leq C \prod_{i=1}^k \mu_i(A_i).
\end{equation}
\end{enumerate}
\end{enumerate} 
\end{Theorem}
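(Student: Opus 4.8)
The plan is to prove the four conditions equivalent by first disposing of the routine implications and then isolating the single genuine difficulty, namely the upgrade from averaged (weak) convergence to convergence in $L^2(\mu)$-norm. Throughout I will use that each $\mu_i$ is equivalent to $\mu$, so that $L^\infty(\mu)=L^\infty(\mu_i)$ and, by Lemma \ref{Lem:Conv:Mean}, I may replace $\mu$ by any convenient equivalent reference measure; I will also reduce to indicator or simple functions by multilinearity, density and polarization whenever that is harmless. The implication (i) $\Rightarrow$ (ii) is immediate: writing $u_n:=T_1^nf_1\cdots T_k^nf_k$, $A_N:=\frac{1}{N-M}\sum_{n=M}^{N-1}u_n$ and $c:=\prod_i\int f_i\,d\mu_i$, if $A_N\to c$ in $L^2(\mu)$ then pairing against a fixed bounded $f_0$ and using continuity of the inner product gives \eqref{JE:cond2}. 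Likewise (ii) $\Rightarrow$ (iii)(b) is the case $f_0\equiv 1$, and taking $f_i=\mathbf 1_{A_i}$ there turns $\int\prod_iT_i^nf_i\,d\mu$ into $\mu(\bigcap_iT_i^{-n}A_i)$, which yields (iv)(b) (in fact with $C=1$ and with the $\limsup$ a genuine limit); so (iii) $\Rightarrow$ (iv) as well.

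The role of the extra test function $f_0$ in (ii) is exactly to encode ergodicity of the product system, and this is the content of (ii) $\Rightarrow$ (iii)(a). Writing $\hat T:=T_1\times\cdots\times T_k$, $\nu:=\mu_1\times\cdots\times\mu_k$ and $F:=f_1\otimes\cdots\otimes f_k$ on $X^k$, and letting $\Delta$ be the push-forward of $\mu$ under the diagonal $x\mapsto(x,\dots,x)$, one checks that \eqref{JE:cond2} says precisely that $\frac{1}{N-M}\sum_{n}\int_{X^k}(f_0\circ p_1)(F\circ\hat T^{\,n})\,d\Delta\to\big(\int f_0\,d\mu\big)\int_{X^k}F\,d\nu$ for all tensors $F$ and all first-coordinate test functions $f_0\circ p_1$, where $p_1$ is the projection onto the first coordinate. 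If $\hat T$ were not ergodic on $(X^k,\nu)$ there would be a non-constant $\hat T$-invariant $\Psi\in L^2(\nu)$, and testing this relation against tensor approximants of $\Psi$ (using $\Psi\circ\hat T=\Psi$) produces off-diagonal resonance terms that violate the factorized right-hand side, exactly as one sees on the model $T_1=R_\alpha,\ T_2=R_{2\alpha}$; this forces (iii)(a). Conversely, for (iii) $\Rightarrow$ (ii) I would start from the mean ergodic theorem on the product, $\frac{1}{N-M}\sum_nF\circ\hat T^{\,n}\to\int F\,d\nu$ in $L^2(\nu)$, and combine it with (iii)(b), promoted from tensors to a dense class by multilinearity, to recover the general $f_0$-statement. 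The equivalence (iii) $\Leftrightarrow$ (iv) then follows by using (iv)(a)$=$(iii)(a) and the product mean ergodic theorem to identify the limit in (iii)(b), with the uniform domination (iv)(b) squeezing the $\limsup$ and $\liminf$ to the common value $\prod_i\mu_i(A_i)$.

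The heart of the matter is the remaining implication (iii) $\Rightarrow$ (i), the passage from averaged to norm convergence of $A_N$ in $L^2(\mu)$. By (ii) we already have $A_N\rightharpoonup c$ weakly, so it suffices to prove $\|A_N\|_{L^2(\mu)}^2\to|c|^2$. Expanding the square and using that each $T_i$ acts by composition, $\|A_N\|^2$ is a Fej\'er-weighted double average of the correlations $\langle u_{n+h},u_n\rangle_{L^2(\mu)}=\int\prod_iT_i^n\big((T_i^hf_i)\overline{f_i}\big)\,d\mu$, whose $n$-average, for each fixed $h$, converges by (iii)(b) to $\gamma(h):=\prod_i\langle T_i^hf_i,f_i\rangle_{L^2(\mu_i)}=\langle \hat T^{\,h}F,F\rangle_{L^2(\nu)}$. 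On the other hand the product mean ergodic theorem gives $\frac1H\sum_{h<H}\gamma(h)\to|c|^2$, equivalently that the Fej\'er average of $\gamma$ converges to $|c|^2$. Thus the goal reduces to showing that the Fej\'er average of the empirical correlations has the same limit as the Fej\'er average of $\gamma$.

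The main obstacle is exactly this last matching. Although for each fixed $h$ the $n$-averaged correlation tends to $\gamma(h)$, the Fej\'er weights put mass of order one on shifts $h$ comparable to the window length, where that convergence is unavailable; a term-by-term comparison fails, and the crude van der Corput inequality only controls $\frac1H\sum_{h<H}|\gamma(h)|$, which would erroneously demand weak mixing of $\hat T$ rather than the mere ergodicity assumed. The resolution I would pursue exploits the full strength of the hypotheses, that the convergence in (ii)/(iii)(b) is uniform over all windows $[M,N)$ as $N-M\to\infty$: this uniform-window convergence controls the empirical correlations simultaneously for all shifts up to a small multiple of the window, while the uniform bound in (iv)(b) (available since (iii) $\Rightarrow$ (iv)) dominates the contribution of the remaining large shifts, so that a two-parameter $\varepsilon$-$H$ argument squeezes $\|A_N\|^2$ between quantities both tending to $|c|^2$. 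Once $\|A_N\|^2\to|c|^2$ is established, weak convergence upgrades to norm convergence, completing (iii) $\Rightarrow$ (i) and closing the cycle. I expect this uniform-in-shift control of the diagonal second moment, reconciling the diagonal average on $(X,\mu)$ with the product mean ergodic theorem on $(X^k,\nu)$, to be the crux of the whole argument.
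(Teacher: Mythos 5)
Your reduction of the hard implication to showing $\|A_N\|_{L^2(\mu)}^2\to|c|^2$ is fine in principle, but the argument you propose for it does not close, and the obstacle you raise against the van der Corput route rests on a misstatement of that lemma. The uniform van der Corput inequality used in the paper (Lemma \ref{vdC}) bounds $\limsup_{N-M\to\infty}\|A_N-c\|^2$ (after centering one $f_i$ so that $c=0$) by
$\limsup_{H}\frac1H\sum_{h=1}^H\limsup_{N-M\to\infty}\frac{1}{N-M}\sum_n\mathrm{Re}\langle u_{n+h},u_n\rangle$ --- real parts, \emph{not} absolute values. For each fixed $h$ the inner limit equals $\mathrm{Re}\,\gamma(h)=\mathrm{Re}\langle\hat T^{\,h}F,F\rangle_{L^2(\nu)}$ by (iii)(b) applied to the functions $T_i^hf_i\cdot\overline{f_i}$, and the Ces\`aro average over $h$ of $\gamma(h)$ tends to $|\int F\,d\nu|^2=0$ by the mean ergodic theorem on the ergodic product system; no weak mixing is required, and no shift comparable to the window length ever appears, because $H$ is sent to infinity only after $N-M$. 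By contrast, your direct expansion of $\|A_N\|^2$ as a Fej\'er average over shifts $|h|<N-M$ genuinely places mass of order one on shifts $h\asymp N-M$, where the convergence of the empirical correlation to $\gamma(h)$ is not uniform in $h$ (the rate depends on $h$ through the functions $T_i^hf_i\cdot\overline{f_i}$, and the hypotheses supply no such uniformity), while (iv)(b) only gives the trivial bound $C\prod_i\|f_i\|_\infty^2$ on each such term, contributing $O(1)$ rather than $o(1)$. The proposed squeeze therefore fails; the correct tool is exactly Lemma \ref{vdC}.

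Two further steps are underargued. First, to derive the identification of the limit in (iii)(b) from (iv) you cannot ``squeeze the $\limsup$ and $\liminf$'' using only the one-sided bound \eqref{JE:condition2} with an unknown constant $C>1$. The paper's argument considers the measures $\nu_{M,N}$ on $X^k$ defined on rectangles by $\nu_{M,N}(A_1\times\cdots\times A_k)=\frac{1}{N-M}\sum_n\mu\bigl(\bigcap_iT_i^{-n}A_i\bigr)$, notes that every weak$^*$ limit point is $T_1\times\cdots\times T_k$-invariant and, by \eqref{JE:condition2}, absolutely continuous with respect to $\mu_1\times\cdots\times\mu_k$, and then invokes Lemma \ref{equivalent_measure} (an invariant measure absolutely continuous with respect to an ergodic invariant measure equals it); this is where the constant $C$ disappears, and it is absent from your outline. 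Second, in (ii) $\Rightarrow$ (iii)(a), testing \eqref{JE:cond2} against tensor approximants of a general non-constant invariant $\Psi\in L^2(\nu)$ does not produce a contradiction: \eqref{JE:cond2} only sees the diagonal, and a general invariant function on the product has no useful diagonal restriction. One must first reduce, via the eigenvalue criterion for ergodicity of products (Lemma \ref{prop2.1}), to eigenfunctions $T_if_i=\lambda_if_i$ with $|f_i|=1$, $\prod_i\lambda_i=1$ and some $\lambda_j\ne1$; then $\prod_if_i(T_i^nx)=\prod_if_i(x)$ for all $n$, and taking $f_0=\overline{\prod_if_i}$ in \eqref{JE:cond2} gives $1=0$.
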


For the proof of the above theorem, we will need the following three useful facts.
The first one is a lemma, which is well known to aficionados. We include the proof for the convenience of the reader.
\begin{Lemma}[cf. Theorem 1 in \cite{PoPy}]
\label{equivalent_measure}
Let $(X, \mathcal{B}, \mu, T)$ be an ergodic measure preserving system. Suppose that $\nu$ is $T$-invariant probability measure on $(X, \mathcal{B})$ with $\nu \ll \mu$. Then $\nu = \mu$.
\end{Lemma}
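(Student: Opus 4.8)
The goal is to prove the following: if $(X, \mathcal{B}, \mu, T)$ is ergodic and $\nu$ is a $T$-invariant probability measure with $\nu \ll \mu$, then $\nu = \mu$.

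The plan is to exploit the Radon–Nikodym derivative $g = \frac{d\nu}{d\mu}$, which exists as a nonnegative element of $L^1(\mu)$ since $\nu \ll \mu$, and to show that the $T$-invariance of both measures forces $g$ to be $T$-invariant as a function, hence constant by ergodicity, hence identically $1$. First I would record what invariance of $\nu$ says in terms of $g$: for every bounded measurable $f$,
\[
\int f \, d\nu = \int f \circ T \, d\nu,
\]
which upon substituting $d\nu = g\, d\mu$ becomes
\[
\int f \, g \, d\mu = \int (f \circ T)\, g \, d\mu.
\]
The aim is to compare this with the transfer (Perron–Frobenius) relation for $\mu$, or more directly to deduce that $g \circ T = g$ $\mu$-a.e.

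The cleanest route I would take is through the transfer operator. Let $P$ denote the transfer operator (dual of composition) on $L^1(\mu)$, characterized by $\int (f\circ T)\, h \, d\mu = \int f \cdot (Ph)\, d\mu$ for all bounded $f$ and $h \in L^1(\mu)$; here $\mu$-invariance of $T$ gives $P\mathbf{1} = \mathbf{1}$. Then the displayed invariance of $\nu$ reads $\int f \, g\, d\mu = \int f \cdot (Pg)\, d\mu$ for all bounded $f$, so $Pg = g$ in $L^1(\mu)$. The key analytic step is to show that a fixed point of $P$ that lies in $L^1(\mu)$ must be $T$-invariant, i.e. $g \circ T = g$ a.e. This follows from the standard contraction property of $P$ for the $L^1$ norm together with the fact that equality in the relevant inequality forces $g$ to be constant along fibers of $T$; concretely, $P|g - g\circ T \circ \ldots|$ type estimates, or the observation that $\int |g|\,d\mu \ge \int |Pg|\,d\mu$ with equality, pin down the a.e. invariance. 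Once $g \circ T = g$ $\mu$-a.e., ergodicity of $(X,\mathcal{B},\mu,T)$ forces $g$ to be $\mu$-a.e. equal to a constant $c$. Since $\nu$ and $\mu$ are both probability measures, $1 = \nu(X) = \int g\, d\mu = c$, so $g = 1$ $\mu$-a.e. and therefore $\nu = \mu$.

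An alternative and perhaps more elementary route that avoids invoking the transfer operator formalism would be to apply the Birkhoff pointwise ergodic theorem directly. By ergodicity of $\mu$, for any $A \in \mathcal{B}$ the averages $\frac{1}{N}\sum_{n=0}^{N-1}\mathbf{1}_A(T^n x)$ converge $\mu$-a.e. (hence $\nu$-a.e., since $\nu \ll \mu$) to $\mu(A)$. On the other hand, since $\nu$ is $T$-invariant, integrating these averages against $\nu$ and passing to the limit via the dominated convergence theorem gives
\[
\nu(A) = \lim_{N\to\infty} \frac{1}{N}\sum_{n=0}^{N-1} \int \mathbf{1}_A(T^n x)\, d\nu(x) = \int \mu(A)\, d\nu = \mu(A).
\]
Since this holds for every $A \in \mathcal{B}$, we conclude $\nu = \mu$. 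I find this second argument the more transparent one and would likely present it as the main proof.

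The main obstacle, and the point deserving care, is the justification of passing the limit inside the integral against $\nu$ in the Birkhoff approach: the ergodic averages converge $\mu$-a.e., but one must ensure convergence $\nu$-a.e., which is exactly where $\nu \ll \mu$ enters (a $\mu$-null set is $\nu$-null), and then invoke boundedness of the averages by $1$ to apply dominated convergence. In the transfer-operator approach the delicate step is instead deducing genuine a.e. invariance $g\circ T = g$ from the fixed-point equation $Pg = g$; this is standard but requires the contraction inequality for $P$ and an equality-case analysis rather than being immediate. Neither obstacle is severe, but both are the places where the hypothesis $\nu \ll \mu$ (as opposed to mere absolute continuity in the reverse direction) and the precise meaning of invariance must be used honestly.
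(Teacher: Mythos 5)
Your second (Birkhoff) argument is correct and is essentially the paper's own proof: the paper likewise writes $\nu(A)=\frac1N\sum_{n=1}^N\nu(T^{-n}A)$, uses ergodicity to get $\mu$-a.e.\ (hence $\nu$-a.e., by $\nu\ll\mu$) convergence of the averages to $\mu(A)$, and concludes by dominated convergence (the paper integrates against $\mu$ with the dominating function $\frac{d\nu}{d\mu}$, you integrate against $\nu$ with the bound $1$ --- the same computation). Since you designate that argument as your main proof, the proposal matches the paper; the transfer-operator sketch is superfluous.
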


\begin{proof}
Let $A \in \mathcal{B}$. Note that 
\begin{align*}
\nu(A) &= \nu(T^{-n} A) = \frac{1}{N} \sum_{n=1}^N \nu(T^{-n}A) \\
           &= \frac{1}{N} \sum_{n=1}^N \int 1_A (T^n x) \, d \nu (x)
           = \int  \left( \frac{1}{N}  \sum_{n=1}^N 1_A (T^n x) \right) \, \frac{d \nu}{ d \mu} (x) \, d \mu (x)
\end{align*}
where $\frac{d \nu}{ d \mu} \in L^1(\mu)$ by Radon-Nikodym theorem.
Since $T$ is ergodic with respect to $\mu$, one has $\frac{1}{N}  \sum_{n=1}^N 1_A (T^n x) \rightarrow \mu(A)$ for a.e. $x$. Then we apply dominated convergence theorem to the sequence of functions $ \left( \frac{1}{N}  \sum_{n=1}^N 1_A (T^n x) \right) \, \frac{d \nu}{ d \mu}(x)$ to derive that
\begin{align*} 
\nu(A) &= \lim_{N \rightarrow \infty}  \int  \left( \frac{1}{N}  \sum_{n=1}^N 1_A (T^n x) \right) \, \frac{d \nu}{ d \mu}(x) \, d \mu(x) \\
&= \int \mu(A) \, \frac{d \nu}{ d \mu}(x) \, d \mu(x) = \mu(A) \int \frac{d \nu}{ d \mu}(x) \, d \mu(x)  = \mu(A). 
\end{align*}
\end{proof}

We also will use the following uniform version of the van der Corput lemma. (See Lemma on p.446 in \cite{BeLe}.)
\begin{Lemma}[van der Corput trick] \label{vdC}
Let $(u_n)_{n \in \mathbb{N}}$ be a bounded sequence in a Hilbert space $\mathcal{H}$. 
Then 
\[
\limsup_{N - M \rightarrow \infty} \left\| \frac{1}{N - M} \sum_{n=M}^{N-1} u_n \right\| ^2 \leq  \limsup_{H \rightarrow \infty} \frac{1}{H} \sum_{h=1}^H \limsup_{N - M \rightarrow \infty} \frac{1}{N -M } \sum_{n=M}^{N-1} \mathrm{Re} \langle u_{n+h}, u_n \rangle .
\]
\end{Lemma}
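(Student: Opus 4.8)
The plan is to run the standard van der Corput smoothing argument, while making sure that every error term decays uniformly in the left endpoint $M$, so that the \emph{uniform} (over intervals $[M,N)$) limsups are respected. Fix $H \in \NN$ and put $C = \sup_n \|u_n\|^2 < \infty$. For each $n$ introduce the smoothed vector $v_n = \frac{1}{H}\sum_{h=0}^{H-1} u_{n+h}$. The first step is a boundary estimate: since $\frac{1}{N-M}\sum_{n=M}^{N-1} u_{n+h}$ and $\frac{1}{N-M}\sum_{n=M}^{N-1} u_n$ differ only in at most $2h \le 2H$ terms, their difference has norm at most $2H\sqrt{C}/(N-M)$, which tends to $0$ as $N-M \to \infty$ uniformly in $M$. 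Averaging over $0 \le h < H$ shows that replacing $u_n$ by $v_n$ leaves the left-hand limsup unchanged, so it suffices to bound $\limsup_{N-M\to\infty}\left\|\frac{1}{N-M}\sum_{n=M}^{N-1} v_n\right\|^2$.

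Next I would apply convexity of $\|\cdot\|^2$ (Cauchy--Schwarz in the $n$-average), giving $\left\|\frac{1}{N-M}\sum_n v_n\right\|^2 \le \frac{1}{N-M}\sum_n \|v_n\|^2$, and then expand. Because $\|v_n\|^2$ is real, its expansion equals $\frac{1}{H^2}\sum_{h,h'=0}^{H-1}\mathrm{Re}\langle u_{n+h}, u_{n+h'}\rangle$, so subadditivity of $\limsup$ over these finitely many \emph{real} summands may be used. The $H$ diagonal terms contribute at most $C/H$. For an off-diagonal pair with $d = h - h' \ge 1$, a second boundary estimate of exactly the same type shows that $\frac{1}{N-M}\sum_n \mathrm{Re}\langle u_{n+h}, u_{n+h'}\rangle$ differs from $\frac{1}{N-M}\sum_n \mathrm{Re}\langle u_{n+d}, u_n\rangle$ by an $O(H/(N-M))$ term; hence its limsup equals $\gamma(d) := \limsup_{N-M\to\infty}\frac{1}{N-M}\sum_{n=M}^{N-1}\mathrm{Re}\langle u_{n+d}, u_n\rangle$. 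Counting the $2(H-d)$ pairs with $|h-h'| = d$ yields, for every fixed $H$,
\[
\limsup_{N-M\to\infty}\left\|\frac{1}{N-M}\sum_{n=M}^{N-1} u_n\right\|^2 \le \frac{C}{H} + \frac{2}{H^2}\sum_{d=1}^{H-1}(H-d)\,\gamma(d).
\]

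It remains to let $H \to \infty$, and this is the step I expect to be the most delicate, since the right-hand side carries Fej\'er-type weights $\frac{2(H-d)}{H^2}$ rather than the flat Ces\`aro weights $\frac{1}{H}$ appearing in the statement; the two agree only in the limit, not term by term. I would reconcile them by Abel summation: writing $S_J = \sum_{d=1}^{J}\gamma(d)$, one checks the identity $\frac{2}{H^2}\sum_{d=1}^{H-1}(H-d)\gamma(d) = \frac{2}{H^2}\sum_{j=1}^{H-1} S_j$, and then a routine Ces\`aro comparison (using only that $\gamma$ is bounded) gives $\limsup_H \frac{2}{H^2}\sum_{j=1}^{H-1} S_j \le \limsup_H \frac{S_H}{H} = \limsup_H \frac{1}{H}\sum_{h=1}^{H}\gamma(h)$. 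Since $C/H \to 0$, taking $\limsup_{H\to\infty}$ of the displayed inequality produces exactly the asserted bound. The only genuine subtlety beyond bookkeeping is guaranteeing that each boundary correction is $O(H/(N-M))$ and therefore vanishes uniformly in $M$; this uniformity is precisely what upgrades the classical van der Corput inequality to the uniform form stated here.
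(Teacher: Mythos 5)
Your proof is correct and complete: the smoothing by $v_n$, the uniform-in-$M$ boundary estimates, the limsup subadditivity over the finitely many real terms, and the Abel/Ces\`aro passage from Fej\'er to flat weights all check out. The paper does not prove this lemma but cites it from Bergelson--Leibman, and your argument is essentially the standard proof of that cited result, so there is nothing to reconcile.
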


Finally, we will need the following lemma, which provides a convenient characterization of ergodicity of products of measure preserving transformations.
\begin{Lemma}[Proposition 2.1 in \cite{BeBe1}]
\label{prop2.1}
Let $(X_i, \mathcal{B}_i, \mu_i, T_i), 1 \leq i \leq k,$ be measure preserving systems. Their product is ergodic if and only if the following conditions are satisfied:
\begin{enumerate}[(1)]
\item Each $T_i$ is ergodic.
\item  If $\lambda_i$ is an eigenvalue of $T_i$ with $\prod_{i=1}^k \lambda_i =1$, then $\lambda_i =1$ for all $i= 1, 2, \dots, k$. (In case $T_i$ is weakly mixing, we put $\lambda_i =1$.)
\end{enumerate} 
\end{Lemma}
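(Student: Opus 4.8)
The plan is to reformulate ergodicity spectrally and then compute a single Ces\`aro limit. Recall that a measure preserving system $(X,\mathcal{B},\mu,T)$ is ergodic if and only if for all $F,G\in L^2(\mu)$ one has $\frac1N\sum_{n=0}^{N-1}\langle U_T^nF,G\rangle\to\langle F,\mathbf 1\rangle\langle\mathbf 1,G\rangle$, where $U_Tf=f\circ T$ and $\mathbf 1$ is the constant function; this is the mean ergodic theorem together with the fact that for an ergodic $T$ the projection onto $U_T$-invariant vectors is $f\mapsto\int f\,d\mu$. Applying this to the product system, and using that the Koopman operator of $T_1\times\cdots\times T_k$ on $\bigotimes_i L^2(\mu_i)$ is $U_{T_1}\otimes\cdots\otimes U_{T_k}$, it suffices to verify the convergence on the dense family of product functions $F=\bigotimes_i f_i$, $G=\bigotimes_i g_i$, since the averages $\frac1N\sum_{n<N}U_T^n$ are uniformly bounded. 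For such functions the correlation factorizes, $\langle U_T^nF,G\rangle=\prod_{i=1}^k\langle U_{T_i}^nf_i,g_i\rangle$, and by the spectral theorem each factor is a Fourier coefficient $\langle U_{T_i}^nf_i,g_i\rangle=\int_{\mathbb T}z^n\,d\sigma_i(z)$ of a complex spectral measure $\sigma_i$ on the unit circle $\mathbb T=\{z\in\mathbb C:|z|=1\}$, whose atoms are supported on the eigenvalues of $U_{T_i}$.

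For the sufficiency direction I would write $\prod_i\langle U_{T_i}^nf_i,g_i\rangle=\int_{\mathbb T^k}(z_1\cdots z_k)^n\,d(\sigma_1\times\cdots\times\sigma_k)$ and pass the Ces\`aro average inside: since $\frac1N\sum_{n<N}(z_1\cdots z_k)^n\to\mathbf 1[z_1\cdots z_k=1]$ boundedly, dominated convergence gives that the limit equals the product-measure mass $(\sigma_1\times\cdots\times\sigma_k)(S)$ of the hypersurface $S=\{z_1\cdots z_k=1\}$. A routine iterated-integral computation, using that a complex measure charges a single point only through its atomic part, shows $(\sigma_1\times\cdots\times\sigma_k)(S)=\sum_{w_1\cdots w_k=1}\prod_i\sigma_i(\{w_i\})$. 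Now condition (1) guarantees each $T_i$ is ergodic, so the eigenvalue-$1$ spectral projection $E_i(\{1\})$ is the projection onto constants and $\sigma_i(\{1\})=\langle E_i(\{1\})f_i,g_i\rangle=(\int f_i)(\overline{\int g_i})$; condition (2), applied to the atoms (which are eigenvalues of $U_{T_i}$), forces every term with some $w_i\ne1$ to vanish. Hence the limit collapses to $\prod_i(\int f_i)(\overline{\int g_i})=\langle F,\mathbf 1\rangle\langle\mathbf 1,G\rangle$, which is exactly what ergodicity of the product requires.

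For necessity, assume the product is ergodic. Testing invariance against functions depending on a single coordinate shows that a nonconstant $T_i$-invariant function would yield a nonconstant product-invariant function, so each $T_i$ is ergodic, giving (1). For (2), suppose $U_{T_i}f_i=\lambda_if_i$ with $f_i\ne0$ and $\prod_i\lambda_i=1$; since each $\lambda_i$ is a unimodular eigenvalue of an ergodic transformation, $|f_i|$ is invariant and hence constant, so I may normalize $|f_i|\equiv1$. Then $F=\bigotimes_if_i$ satisfies $U_TF=(\prod_i\lambda_i)F=F$, so by ergodicity $F$ is a constant of modulus one; fixing typical values of all but one coordinate shows each $f_i$ is constant, whence $\lambda_i=1$. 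The weak mixing convention is consistent with this: a weakly mixing $T_i$ has no nonconstant eigenfunctions, so its only eigenvalue is $1$ and condition (2) imposes nothing at that coordinate. The main obstacle, and the only genuinely technical step, is the measure-theoretic reduction of $(\sigma_1\times\cdots\times\sigma_k)(S)$ to the atomic sum, which is precisely where the continuous spectral components are shown to contribute nothing and where condition (2) is brought to bear; the remainder is bookkeeping with the spectral theorem.
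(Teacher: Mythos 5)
The paper itself offers no proof of this lemma; it is quoted from \cite{BeBe1}, so there is nothing internal to compare against, and your argument must be judged on its own. On its own terms it follows the standard spectral route and is almost complete: the Ces\`aro reformulation of ergodicity via the mean ergodic theorem, the reduction to product functions by density and uniform boundedness of the averaging operators, the identity $(\sigma_1\times\cdots\times\sigma_k)(S)=\sum_{w_1\cdots w_k=1}\prod_i\sigma_i(\{w_i\})$ (legitimate: Fubini holds for products of complex measures, the atoms of each $\sigma_i$ are countable with $\sum_w|\sigma_i(\{w\})|\le\|\sigma_i\|$, so the sum converges absolutely), and the necessity direction (including the normalization $|f_i|\equiv 1$ and the Fubini argument showing each $f_i$ is constant) are all correct.

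There is, however, one genuine soft spot, and it matters precisely in the generality this paper needs: the lemma is stated for measure preserving systems with no invertibility assumption, and it is applied in the proof of Theorem \ref{criterion:JE} to non-invertible maps such as $T_\beta$ and $T_G$. For such $T_i$ the Koopman operator $U_{T_i}$ is an isometry, not a unitary, so ``by the spectral theorem'' does not literally apply, and your key assertion that the atoms of $\sigma_i$ are supported on the eigenvalues of $U_{T_i}$ is exactly the point at risk: one can define $\sigma_i$ through the minimal unitary dilation of $U_{T_i}$ (equivalently, the Koopman operator of the natural extension), but a priori its atoms sit on the point spectrum of the \emph{dilation}, while condition (2) speaks only of eigenvalues of $T_i$. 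The same issue affects your evaluation $\sigma_i(\{1\})=(\int f_i\,d\mu_i)(\overline{\int g_i\,d\mu_i})$. The patch is short: for $|w|=1$ set $\phi=\lim_{N\to\infty}\frac1N\sum_{n=0}^{N-1}\bar w^{\,n}U_{T_i}^nf_i$, which exists in $L^2(\mu_i)$ by the mean ergodic theorem applied to the contraction $\bar w\,U_{T_i}$ and satisfies $U_{T_i}\phi=w\phi$; on the dilation space the same averages converge to $E_i(\{w\})f_i$, and since each average lies in the (closed) subspace $L^2(\mu_i)$, one gets $E_i(\{w\})f_i=\phi\in L^2(\mu_i)$ and $\sigma_i(\{w\})=\langle\phi,g_i\rangle$. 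Hence a nonzero atom at $w$ forces $w$ to be an eigenvalue of $T_i$ itself (so condition (2) applies), and for $w=1$ ergodicity of $T_i$ gives $\phi=\int f_i\,d\mu_i$, recovering the formula you used. With this insertion your proof is complete in the required generality.
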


\begin{proof}[Proof of Theorem \ref{criterion:JE}]
It is obvious that $(i) \Rightarrow (ii) \Rightarrow (iii)(b) \Rightarrow (iv)(b)$. 

Let us prove $(ii) \Rightarrow (iii)(a)$.
First, we will show that  the condition $(ii)$ implies that each $T_i$ is ergodic. Indeed, if this is not the case, then, for some $i \in \{1, 2, \dots, k\}$, there is a $T_i$-invariant set $A$ with $0 < \mu(A) < 1$. Let $f_0 = 1_{A^c}$, $f_i = 1_A$ and $f_j =1$ for $j \geq 1, j \ne i$. 
Then substituting into formula \eqref{JE:cond2} gives $\mu(A) \mu(A^c) = 0$, which is a contradiction.  
In light of condition (2) in Lemma \ref{prop2.1}, in order to finish this part of proof, it is sufficient to show that if $\lambda_i$ is an eigenvalue of $T_i$ with $\prod_{i=1}^k \lambda_i =1$, then $\lambda_i =1$ for all $i= 1, 2, \dots, k$. Assume that $T_i f_i = \lambda_i f_i$ for $i \in \{1, 2, \dots, k\}$. (In case $T_i$ is weak mixing, we put $f_i = 1$ and $\lambda_i =1$.) Since each $T_i$ is ergodic, we can assume that $|f_i| =1$. Suppose that $\lambda_j \ne 1$ for some $j$. Also, note that, by ergodicity, if $\lambda_j \ne 1$, then $\int f_j d \mu_j = 0$. If we let $f_0 := \overline{\prod_i f_i}$, then substituting $f_0, f_1, \dots, f_k$ into formula \eqref{JE:cond2} gives $1 = 0$, a contradiction. 

It remains to show $(iv) \Rightarrow (i)$. Let us first show that for any $f_1, \dots, f_k \in L^{\infty}$,
\begin{equation}
\label{refinement_condition(ii)(b)}
\lim_{N-M \rightarrow \infty} \frac{1}{N-M} \sum_{n=M}^{N-1} \int T_1^n f_1 \cdots T_k^n f_k \, d \mu = \prod_{i=1}^k \int f_i \, d \mu_i.
\end{equation}
Let $\mathcal{M}$ be the space of probability Borel measures on $X^k$.
Define a doubly indexed sequence $\nu_{M,N} \in \mathcal{M}, \, M < N,$ by the formula
\[ \nu_{M, N} (A_1 \times A_2 \times \cdots \times A_k) 
:= \frac{1}{N - M} \sum_{n=M}^{N-1} \mu \left(\bigcap_{i=1}^k T_i^{-n} A_i \right), \]
where $A_1, \dots, A_k \in \mathcal{B}$.
Let $\nu$ be any weak* limit point of $\nu_{M,N}$ (as $N-M \rightarrow \infty$). 
It is obvious that $\nu$ is a probability measure, which is invariant under $ T_1 \times T_2 \times \cdots \times T_k$.
Also, condition ($iv$)(b) implies that $\nu$ is absolutely continuous with respect to $ \mu_1 \times \mu_2 \times \cdots \times \mu_k$.
Thus, Lemma \ref{equivalent_measure} implies that  $\nu = \mu_1 \times \mu_2 \times \cdots \times \mu_k$ and so formula \eqref{refinement_condition(ii)(b)} holds for any bounded measurable functions $f_1, \dots, f_k$. 

To prove \eqref{JE:condition1}, in view of the identity $f_1 = (f_1 - \int f_1 \, d \mu_1) + \int f_1 \, d \mu_1 $, it is enough to show that for any real-valued functions $f_1, \dots, f_k \in L^{\infty}$  with $\int f_1 \, d \mu_1 = 0$,
\begin{equation}
\label{eq2.6}
    \lim_{N-M \rightarrow \infty} \frac{1}{N - M} \sum_{n=M}^{N-1} T_1^{n} f_1 \cdot T_2^n f_2 \cdots T_k^n f_k = 0. 
\end{equation}
To show \eqref{eq2.6}, we apply van der Corput trick to $u_n = T_1^n f_1 T_2^n f_2 \cdots T_k^n f_k$.
Set $F(x_1, \dots, x_k) := \prod_{i=1}^k f_i(x_i)$. We have 
\begin{align*}
&\lim_{H \rightarrow \infty} \frac{1}{H} \sum_{h=1}^H \, \lim_{N-M \rightarrow \infty} \frac{1}{N-M} \sum_{n=M}^{N-1} \langle u_{n+h}, u_n \rangle_{L^2(\mu)} \\
&\quad= \lim_{H \rightarrow \infty} \frac{1}{H} \sum_{h=1}^H \, \lim_{N-M \rightarrow \infty} \frac{1}{N-M} \sum_{n=M}^{N-1}  \int T_1^n(f_1 T_1^h f_1) \cdots T_k^n(f_kT_k^h f_k) \, d \mu \\
&\quad=\lim_{H \rightarrow \infty} \frac{1}{H} \sum_{h=1}^H  \prod_{i=1}^k \int f_i T_i^h f_i \, d \mu_i  \quad (\text{due to } \eqref{refinement_condition(ii)(b)}) \\
&\quad = \lim_{H \rightarrow \infty}  \frac{1}{H} \sum_{h=1}^H \int  F(x_1, \dots, x_k) \, ( T_1 \times T_2 \times \cdots \times T_k)^h  F(x_1, \dots, x_k) \, d(\mu_1 \times \mu_2 \times \cdots \times \mu_k) \\
&\quad = \left( \int F(x_1, \dots, x_k)  \, d(\mu_1 \times \mu_2 \times \cdots \times \mu_k) \right)^2  = \prod_{i=1}^k \left( \int f_i \, d \mu_i \right)^2 = 0.
\end{align*}
\end{proof}

The following result, which was obtained in \cite{BeBe2}, is an immediate consequence of the previous theorem.
\begin{Theorem}
\label{JE:BeBe}
Let $T_1, \dots, T_k$ be a measure preserving transformations on a probability space $(X, \mathcal{B}, \mu)$.
The following conditions are equivalent:
\begin{enumerate}[(i)]
\item $T_1, \dots, T_k$ are uniformly jointly ergodic: for any $f_1, \dots f_k \in L^{\infty} (\mu)$, 
\[
\lim_{N -M \rightarrow \infty} \frac{1}{N -M} \sum_{n=M}^{N-1} T_1^{n} f_1 \cdot T_2^n f_2 \cdots T_k^n f_k = \prod_{i=1}^k \int f_i \, d \mu \quad \text{ in } L^2(\mu). 
\]
\item \begin{enumerate}[(a)]
\item $T_1 \times T_2 \times \cdots \times T_k$ is ergodic with respect to $\mu \times \mu \times \cdots \times \mu$
\item for any $f_1, \dots, f_k \in L^{\infty}(\mu)$, 
$$\lim\limits_{N -M \rightarrow \infty} \frac{1}{N -M} \sum\limits_{n=M}^{N-1} \int T_1^{n} f_1 \cdot T_2^n f_2 \cdots T_k^n f_k \, d \mu= \prod_{i=1}^k \int f_i \, d \mu.$$
\end{enumerate}
\end{enumerate} 
\end{Theorem}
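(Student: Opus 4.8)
The plan is to obtain this theorem as the specialization of Theorem \ref{criterion:JE} to the situation in which all of the invariant measures coincide with the ambient measure $\mu$. First I would observe that, since each $T_i$ is by hypothesis $\mu$-preserving, the choice $\mu_i := \mu$ furnishes a $T_i$-invariant probability measure that is (trivially) equivalent to $\mu$. Hence the standing hypotheses of Theorem \ref{criterion:JE} are satisfied with this choice, and the theorem applies verbatim.

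Next I would check that, under the identification $\mu_1 = \mu_2 = \cdots = \mu_k = \mu$, the two conditions listed here are literally the corresponding conditions of Theorem \ref{criterion:JE}. Condition (i) above asks that $\frac{1}{N-M}\sum_{n=M}^{N-1} T_1^n f_1 \cdots T_k^n f_k \to \prod_{i=1}^k \int f_i \, d\mu$ in $L^2(\mu)$; since $\int f_i \, d\mu_i = \int f_i \, d\mu$, this is precisely condition (i) of Theorem \ref{criterion:JE}. Likewise, condition (ii) above is exactly condition (iii) of Theorem \ref{criterion:JE}: part (a), ergodicity of $T_1 \times \cdots \times T_k$ with respect to $\mu_1 \times \cdots \times \mu_k = \mu \times \cdots \times \mu$, is unchanged, and part (b), the averaged correlation identity with limit $\prod_{i=1}^k \int f_i \, d\mu$, matches after the same substitution.

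Finally, I would invoke the equivalence $(i) \Leftrightarrow (iii)$ proved in Theorem \ref{criterion:JE}, which immediately gives the equivalence $(i) \Leftrightarrow (ii)$ asserted here. There is no genuine obstacle in this argument: all of the substance resides in the more general Theorem \ref{criterion:JE}, and the only task is the routine bookkeeping of verifying that the specialized statements agree term by term. This is why the result can be recorded as an immediate consequence of the preceding theorem.
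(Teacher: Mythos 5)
Your proposal is correct and matches the paper's approach exactly: the paper records this theorem as an immediate consequence of Theorem \ref{criterion:JE}, obtained precisely by taking $\mu_1 = \cdots = \mu_k = \mu$ and matching condition (i) with condition (i) and condition (ii) with condition (iii) of that theorem. The bookkeeping you carry out is the full content of the paper's (omitted) argument.
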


The following corollary of Theorem \ref{criterion:JE} will be instrumental for dealing with joint ergodicity of transformations from class $\mathcal{T}$. See Theorem \ref{thm:L^2_joint} in Section \ref{sec:jointergodicityPM}.

\begin{Corollary}
\label{Cor:L2_joint}
Let $(X, \mathcal{B}, \mu)$ be a probability space. Let $\mu_1, \dots, \mu_k$ be probability measures such that $\mu_i$ is equivalent to $\mu$. Let $T_i$ be a measure preserving transformation on $(X, \mathcal{B}, \mu_i)$ for $1 \leq i \leq k$.
Suppose that 
\begin{enumerate}[(1)]
\item $T_1 \times T_2 \times \cdots \times T_k$ is ergodic with respect to $\mu_1 \times \mu_2 \times \cdots \times \mu_k$,
\item there is $C>0$ such that for all $A_1, A_2, \dots, A_k \in \mathcal{B}$
\begin{equation}
\label{2ndcondition} 
\limsup_{n \rightarrow \infty} \mu (T_1^{-n} A_1 \cap T_2^{-n} A_2 \cap \cdots \cap T_k^{-n} A_k) \leq C \prod_{i=1}^k \mu_i(A_i) .
\end{equation}
\end{enumerate} 
Then, $T_1, \dots, T_k$ are uniformly jointly ergodic with respect to $\mu$. 
\end{Corollary}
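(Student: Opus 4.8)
The plan is to deduce Corollary \ref{Cor:L2_joint} directly from Theorem \ref{criterion:JE} by verifying that its hypotheses are exactly condition $(iv)$ of that theorem, after which conclusion $(i)$—uniform joint ergodicity—follows at once. Indeed, hypothesis $(1)$ is verbatim condition $(iv)(a)$, so the only genuine content is to pass from the single-limit estimate $(2)$, namely
\begin{equation*}
\limsup_{n \rightarrow \infty} \mu (T_1^{-n} A_1 \cap \cdots \cap T_k^{-n} A_k) \leq C \prod_{i=1}^k \mu_i(A_i),
\end{equation*}
to the averaged estimate $(iv)(b)$,
\begin{equation*}
\limsup_{N-M \rightarrow \infty} \frac{1}{N-M} \sum_{n=M}^{N-1} \mu (T_1^{-n} A_1 \cap \cdots \cap T_k^{-n} A_k) \leq C \prod_{i=1}^k \mu_i(A_i).
\end{equation*}

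The key observation is an elementary Cesàro fact: for any bounded real sequence $(c_n)$, one has $\limsup_{N-M \rightarrow \infty} \frac{1}{N-M}\sum_{n=M}^{N-1} c_n \leq \limsup_{n \rightarrow \infty} c_n$. First I would set $c_n := \mu(T_1^{-n}A_1 \cap \cdots \cap T_k^{-n}A_k)$, which is bounded (by $1$). Writing $L := \limsup_{n \to \infty} c_n$, fix $\varepsilon > 0$ and choose $n_0$ so that $c_n \leq L + \varepsilon$ for all $n \geq n_0$. For any window $[M, N)$ with $M \geq n_0$, every term of the average is at most $L + \varepsilon$, so the average itself is at most $L + \varepsilon$; since the uniform Cesàro limsup is taken over windows with $N - M \to \infty$, the contribution of the finitely many indices below $n_0$ is washed out, giving $\limsup_{N-M\to\infty}\frac{1}{N-M}\sum_{n=M}^{N-1} c_n \leq L + \varepsilon$, and letting $\varepsilon \to 0$ yields the claim. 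Combining this with hypothesis $(2)$ gives precisely the averaged bound with the same constant $C$, which is condition $(iv)(b)$.

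Having verified $(iv)(a)$ and $(iv)(b)$ of Theorem \ref{criterion:JE}, the implication $(iv) \Rightarrow (i)$ proved there immediately yields that $T_1, \dots, T_k$ are uniformly jointly ergodic with respect to $(\mu; \mu_1, \dots, \mu_k)$, which is the desired conclusion (the phrase ``with respect to $\mu$'' in the statement being shorthand for this, since the $\mu_i$ are the fixed invariant measures). I do not anticipate any serious obstacle here: the entire argument is a short reduction, and the only nontrivial step—the Cesàro domination lemma—is completely standard. The one point deserving a line of care is confirming that the uniform limsup (over all windows with $N - M \to \infty$, not merely $M = 0$, $N \to \infty$) is still dominated by the ordinary limsup; this is exactly where one uses that $M \to \infty$ along with $N - M \to \infty$ forces the averaging windows eventually into the region $n \geq n_0$, and the boundedness of $(c_n)$ ensures the initial segment is negligible.
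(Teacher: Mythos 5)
Your proposal is correct and matches the paper's (implicit) argument: the paper states this as an immediate consequence of Theorem \ref{criterion:JE}, and the only content is exactly the elementary Cesàro domination you supply, which correctly reduces hypothesis (2) to condition $(iv)(b)$.
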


\section{Some Preparatory Material}
\label{sec:PM}

In this section, we review some useful facts about systems $(T, \mu, \mathcal{A})$ belonging to the family $\mathcal{T}$. 
First, we formulate the entropy equipartition property. 
Then we discuss two technical conditions - property B and $\alpha$-mixing, which were introduced in Section \ref{sec:Intro}. 
The material reviewed in this section will be instrumental for providing the proofs of Theorem \ref{Intro:thm:L^2_joint} (in Section \ref{sec:jointergodicityPM}) and Theorem \ref{Int:Thm:JM2:new} (in Section \ref{sec:jointmixingPM}).

Let $X=[0,1]$ and let $\lambda$ be the Lebesgue measure on $(X, \mathcal{B})$, where $\mathcal{B}$ is the Borel $\sigma$-algebra.
By a partition of the interval $[0,1]$, we mean a finite or countable collection $\mathcal{A} = \{A_i\}$ of intervals such that $\text{int} (A_i) \ne \emptyset$, $\text{int} (A_i) \cap \text{int} (A_j) = \emptyset$ for $i \ne j$ and $[0,1] = \bigcup_i A_i \, \bmod \, 1$. Intervals may be open, closed, and half open on either end. 

\begin{Definition}
Let $(X, \mathcal{B}, \mu, T)$ be an ergodic measure preserving system such that $\frac{1}{c} \leq \frac{ d \lambda}{d \mu} \leq c$ for some $c \geq 1$ and let $\mathcal{A}$ be a partition of $[0,1]$. We will say that $(T, \mu, \mathcal{A}) \in \mathcal{T}$ if 
\begin{enumerate}
\item 
for any interval $I \in \mathcal{A}$, $T|_{\text{int}(I)}$ is continuous and strictly monotone,
\item  partition $\mathcal{A}$ generates the $\sigma$-algebra $\mathcal{B}$, meaning that $\mathcal{B} = \bigvee_{j=0}^{\infty} T^{-j} \sigma( \mathcal{A}) \mod \lambda$, where $\sigma (\mathcal{A})$ denotes the sub-$\sigma$-algebra generated by $\mathcal{A}$,
\item entropy of the partition $\mathcal{A}$ is finite: $H(\mathcal{A}) = - \sum \mu(A_i) \log \mu (A_i) < \infty$. 
\end{enumerate}
\end{Definition}

\subsection{Some basic facts about Kolmogorov-Sinai entropy}
For a measure preserving system $(X, \mathcal{B}, \mu, T)$, $h(T)$ denotes Kolmogorov-Sinai entropy of $T$. 
\subsubsection{Entropy equipartition}
 \begin{Theorem}[Shannon-McMillan-Breiman, see for example Section 6.2 in \cite{Pe}]
\label{entropy:SMB}
Let $(X, \mathcal{B}, \mu, T)$ be a measure preserving system.
Let $\mathcal{A}$ be a  countable partition of $X$ with $- \sum \mu(A_j) \log \mu (A_j) < \infty$ and $  \bigvee_{n=0}^{\infty} T^{-n} \sigma( \mathcal{A} ) = \mathcal{B}$. 
For $x \in X$, let $\mathcal{A}(x)$ be the element of $\mathcal{A}$ to which $x$ belongs.
Denote the information function associated to $\mathcal{A}$ by  $I_{\mathcal{A}} (x) = - \log \mu(\mathcal{A}(x))$. Then
\[ \frac{1}{n} I_{\mathcal{A}^n} (x) \rightarrow h(T) \,\, \text{a.e. and in } L^1. \]
\end{Theorem}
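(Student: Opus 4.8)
The plan is to reduce the statement to the asymptotics of a Birkhoff-type average of conditional information functions, combined with the increasing-martingale convergence theorem. Write $\mathcal{A}^n = \bigvee_{j=0}^{n-1} T^{-j}\mathcal{A}$ and, for $1 \le k \le \infty$, set $\mathcal{A}_1^k = \bigvee_{j=1}^{k} T^{-j}\mathcal{A}$. Let $g_k = I_{\mathcal{A}\mid \mathcal{A}_1^k}$ denote the conditional information of $\mathcal{A}$ given the first $k$ future refinements (with $g_0 = I_{\mathcal{A}}$), and let $g_\infty = I_{\mathcal{A}\mid \mathcal{A}_1^\infty}$. The first step is to record the chain rule $I_{\mathcal{P}\vee\mathcal{Q}} = I_{\mathcal{P}} + I_{\mathcal{Q}\mid\mathcal{P}}$ for information functions and to iterate it, using $\mathcal{A}^n = \mathcal{A}\vee T^{-1}\mathcal{A}^{n-1}$ together with $I_{T^{-1}\mathcal{P}}(x) = I_{\mathcal{P}}(Tx)$, to obtain the telescoping identity
\begin{equation*}
I_{\mathcal{A}^n}(x) = \sum_{k=0}^{n-1} g_k\big(T^{\,n-1-k}x\big).
\end{equation*}
Dividing by $n$ then converts the problem into the asymptotics of an average of the functions $g_k$ along the orbit of $x$.

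Next I would invoke martingale convergence: since the $\mathcal{A}_1^k$ form an increasing sequence of $\sigma$-algebras with $\bigvee_k \mathcal{A}_1^k = \mathcal{A}_1^\infty$, for each atom $A \in \mathcal{A}$ the conditional probabilities $\mu(A\mid\mathcal{A}_1^k)$ converge a.e. and in $L^1$ to $\mu(A\mid\mathcal{A}_1^\infty)$, whence $g_k \to g_\infty$ a.e. The crux of the argument — and the step I expect to be the main obstacle — is to produce an integrable dominating function for the $g_k$. This is the classical maximal inequality for conditional information (Chung's lemma): setting $g^{*} = \sup_{k} g_k$, one shows that $\mu\{g^{*} > t\}$ decays fast enough that $g^{*} \in L^1$. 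The standard route is, for each atom $A$, to estimate the distribution of $\big(-\log \mu(A\mid\mathcal{A}_1^k)\big)\mathbf{1}_A$ uniformly in $k$ by a stopping-time/stacking argument against the conditional measures, and then to sum over $A \in \mathcal{A}$; the hypothesis $H(\mathcal{A}) < \infty$ is exactly what makes the resulting series converge.

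With $g_k \to g_\infty$ a.e. and $g^{*} \in L^1$ in hand, I would conclude by applying Maker's generalized ergodic theorem to the reindexed average
\begin{equation*}
\frac{1}{n}\, I_{\mathcal{A}^n}(x) = \frac{1}{n}\sum_{j=0}^{n-1} g_{\,n-1-j}\big(T^{\,j}x\big),
\end{equation*}
which is precisely of the form Maker's theorem handles: a Birkhoff average whose summands $g_{n-1-j}$ converge (as $n\to\infty$ for each fixed $j$) to $g_\infty$ while staying dominated by $g^{*} \in L^1$. Maker's theorem then delivers convergence a.e. and in $L^1$ to $\mathbb{E}[g_\infty \mid \mathcal{I}]$, where $\mathcal{I}$ is the $\sigma$-algebra of $T$-invariant sets. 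If one prefers to avoid Maker's theorem, the same conclusion follows by writing $g_{n-1-j} = g_\infty - (g_\infty - g_{n-1-j})$, treating the main term by Birkhoff's theorem and controlling the error term through the tail bound $\sup_{k \ge N}|g_\infty - g_k| \downarrow 0$ together with the dominated ergodic theorem.

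It remains to identify the limit. By definition of conditional entropy, $\int g_\infty \, d\mu = H(\mathcal{A}\mid\mathcal{A}_1^\infty) = h(T,\mathcal{A})$, and the hypothesis that $\mathcal{A}$ generates $\mathcal{B}$ lets the Kolmogorov--Sinai theorem identify $h(T,\mathcal{A}) = h(T)$. For the ergodic systems under consideration $\mathcal{I}$ is trivial, so $\mathbb{E}[g_\infty\mid\mathcal{I}] = \int g_\infty\,d\mu = h(T)$ is the asserted constant; in the general case the limit is the $T$-invariant function $\mathbb{E}[g_\infty\mid\mathcal{I}]$, whose integral equals $h(T)$.
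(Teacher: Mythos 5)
Your proposal is correct: the paper itself gives no proof of this classical theorem, citing Section 6.2 of Petersen's book, and your argument — the telescoping identity $I_{\mathcal{A}^n}(x)=\sum_{k=0}^{n-1}g_k(T^{n-1-k}x)$ via the chain rule, martingale convergence of the conditional information functions, Chung's maximal inequality to get $\sup_k g_k\in L^1$ (this is indeed where $H(\mathcal{A})<\infty$ is used), and Maker's generalized ergodic theorem to pass to the limit $\mathbb{E}[g_\infty\mid\mathcal{I}]=h(T)$ in the ergodic case — is precisely the standard proof given in that reference. You are even slightly more careful than the paper's statement in noting that the constant limit $h(T)$ requires ergodicity, which holds for all systems in the class $\mathcal{T}$ considered here.
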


Given $(T, \mu, \mathcal{A}) \in \mathcal{T}$, we say that an interval $A \in \mathcal{A}^n :=  \bigvee\limits_0^{n-1} T^{-j} \mathcal{A}$, $n \in \mathbb{N}$, is a cylinder of rank $n$. The following corollary of Shannon-McMillan-Breiman theorem, which is often referred to as the entropy equipartition property, states roughly that if a cylinder $A$ of rank $n$ belongs to the class of good atoms, then $\mu(A)$ is approximately $e^{-  n h(T)}$. 
\begin{Corollary}
\label{entropy:intervals}
Given $\epsilon > 0$, there is $n_0$ such that for $n \geq n_0$ the elements of $\mathcal{A}^n$ can be divided into two classes 
$\mathfrak{g}^n = \mathfrak{g}^{n} (\epsilon)$ and $\mathfrak{b}^n= \mathfrak{b}^n (\epsilon)$ (classes of ``good" and ``bad" atoms) such that 
\begin{enumerate}
\item $\mu \left( \bigcup_{B \in \mathfrak{b}^n} B \right) < \epsilon$
\item For $A \in \mathfrak{g}^n$,
\begin{equation*}
e^{- n (h(T) + \epsilon)} < \mu(A) < e^{-n (h(T) - \epsilon)}.
\end{equation*}
\end{enumerate}
\end{Corollary}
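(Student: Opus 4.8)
The plan is to read off the two classes directly from the Shannon--McMillan--Breiman theorem (Theorem \ref{entropy:SMB}), since the corollary is essentially a restatement of it. First I would define the good atoms to be precisely those whose measure lies in the prescribed range,
\[
\mathfrak{g}^n := \left\{ A \in \mathcal{A}^n : e^{-n(h(T)+\epsilon)} < \mu(A) < e^{-n(h(T)-\epsilon)} \right\},
\]
and set $\mathfrak{b}^n := \mathcal{A}^n \setminus \mathfrak{g}^n$. With this choice condition (2) holds tautologically, so the entire content of the corollary reduces to verifying condition (1).

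The key observation is that the information function $I_{\mathcal{A}^n}(x) = -\log \mu(\mathcal{A}^n(x))$ is constant on each atom of $\mathcal{A}^n$, which lets me convert the probabilistic statement about points $x$ into a combinatorial statement about atoms $A$. Taking logarithms in the defining inequalities shows that, for $x \in A$, the condition $A \in \mathfrak{g}^n$ is equivalent to $\left| \frac{1}{n} I_{\mathcal{A}^n}(x) - h(T) \right| < \epsilon$. Consequently the union of bad atoms is exactly the exceptional level set
\[
\bigcup_{B \in \mathfrak{b}^n} B = \left\{ x \in X : \left| \frac{1}{n} I_{\mathcal{A}^n}(x) - h(T) \right| \geq \epsilon \right\}.
\]

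To bound the measure of this set I would invoke the convergence $\frac{1}{n} I_{\mathcal{A}^n} \to h(T)$ (a.e.\ and in $L^1$) guaranteed by Theorem \ref{entropy:SMB}. Since $\mu$ is a finite measure, either form of convergence yields convergence in measure (from $L^1$ directly by Chebyshev's inequality, or from the a.e.\ statement by the standard implication for finite measure spaces), whence $\mu\big( \{ x : | \frac{1}{n} I_{\mathcal{A}^n}(x) - h(T) | \geq \epsilon \} \big) \to 0$ as $n \to \infty$. Choosing $n_0$ so that this measure is below $\epsilon$ for all $n \geq n_0$ gives condition (1). There is no serious obstacle here: the only point requiring (routine) care is this passage from convergence of $\frac{1}{n} I_{\mathcal{A}^n}$ to smallness of the exceptional set, i.e.\ the implication ``convergence in $L^1$ (or a.e.\ on a finite measure space) $\Rightarrow$ convergence in measure,'' together with the remark that $I_{\mathcal{A}^n}$ is atom-wise constant.
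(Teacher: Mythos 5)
Your proof is correct and is exactly the standard argument the paper has in mind: the corollary is stated without proof as an immediate consequence of Shannon--McMillan--Breiman, and your reduction (good atoms defined by the measure bounds, bad set identified with the level set $\{x : |\tfrac{1}{n}I_{\mathcal{A}^n}(x) - h(T)| \geq \epsilon\}$, then convergence in measure) is the intended derivation. Nothing is missing.
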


\begin{Remark}
Note that  for $(T, \mu, \mathcal{A}) \in \mathcal{T}$, $\frac{1}{C} \mu \leq \lambda \leq  C \mu$ for some $C \geq 1$, so
\[ \frac{\log \mu(\mathcal{A}^n(x)) - \log C}{n} \leq \frac{\log \lambda (\mathcal{A}^n(x))}{n} \leq \frac{\log \mu(\mathcal{A}^n(x)) + \log C}{n}. \]
So if a cylinder $A$ of rank $n$ belongs to the class of good atoms, $\lambda(A)$ is approximately $e^{-  n h(T)}$.
\end{Remark} 

\subsubsection{Rokhlin formula}
The following theorem, which goes back to \cite{Rok1}, is useful for computing the entropy $h(T)$ of $(T, \mu, \mathcal{A}) \in \mathcal{T}$.  
\begin{Theorem}[cf. Theorem 2 in \cite{Bu}]
\label{Rokhlin}
Let $T$ be a measure preserving transformation on a probability space $(X, \mu)$. Assume that there is a generating countable measurable partition of $X$ 
into pieces $X_i$ on each of which the restriction $T|X_i$ is one-to-one and non-singular with respect to $\mu$ and $- \sum \mu(X_i) \log \mu (X_i) < \infty$. Let $JT = \frac{d \mu \circ (T|X_i)}{d \mu}$ if $x \in X_i$. Then 
\[ h(T) = \int \log JT d \mu.\]  
\end{Theorem}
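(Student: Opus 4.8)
The plan is to reduce Rokhlin's formula to the computation of a single conditional expectation. Write $\mathcal{P} = \{X_i\}$ for the given generating partition. Since $\mathcal{P}$ generates $\mathcal{B}$ and $H(\mathcal{P}) = -\sum_i \mu(X_i)\log\mu(X_i) < \infty$, the Kolmogorov--Sinai theorem gives $h(T) = h(T, \mathcal{P})$. The first step is the standard conditional-entropy identity
\[
h(T, \mathcal{P}) = H\!\left(\mathcal{P} \;\Big|\; \bigvee_{n=1}^{\infty} T^{-n}\sigma(\mathcal{P})\right),
\]
which I would derive by applying the chain rule for entropy together with the $T$-invariance of $\mu$ to the telescoping identity $H\!\big(\bigvee_{k=0}^{n-1}T^{-k}\sigma(\mathcal{P})\big) = \sum_{j=0}^{n-1} H\big(\mathcal{P} \mid \bigvee_{k=1}^{j}T^{-k}\sigma(\mathcal{P})\big)$, and then invoking the (decreasing) martingale convergence theorem for conditional entropies to pass to the limit. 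Since $\mathcal{P}$ generates, one has $\bigvee_{n=1}^{\infty} T^{-n}\sigma(\mathcal{P}) = T^{-1}\mathcal{B}$, so the task reduces to computing $H(\mathcal{P} \mid T^{-1}\mathcal{B})$.

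The heart of the argument is to show that, for $x \in X_i$,
\[
\mathbb{E}\big[\mathbf{1}_{X_i} \,\big|\, T^{-1}\mathcal{B}\big](x) = \frac{1}{JT(x)}.
\]
To see this, observe that every $T^{-1}\mathcal{B}$-measurable function has the form $\phi\circ T$, so I write the conditional expectation as $\phi_i\circ T$ and pin down $\phi_i$ via its defining property $\int_{T^{-1}C}\phi_i\circ T\,d\mu = \mu(X_i \cap T^{-1}C)$ for every $C\in\mathcal{B}$. The left-hand side equals $\int_C \phi_i\,d\mu$ because $T$ preserves $\mu$. For the right-hand side I use that $T|_{X_i}$ is a non-singular bijection onto $T(X_i)$ with inverse $S_i$, whose Jacobian is the reciprocal $1/(JT\circ S_i)$; the change-of-variables formula attached to $JT$ then yields $\mu(X_i\cap T^{-1}C) = \int_{C\cap T(X_i)} \tfrac{1}{JT(S_i y)}\,d\mu(y)$. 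Comparing the two expressions identifies $\phi_i(y) = \mathbf{1}_{T(X_i)}(y)/JT(S_i y)$, and evaluating at $y = Tx$ for $x\in X_i$ (where $S_i(Tx)=x$) gives the claim.

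Finally, substituting into the formula for the conditional entropy of a countable partition,
\[
H(\mathcal{P}\mid T^{-1}\mathcal{B}) = -\int \log \mathbb{E}\big[\mathbf{1}_{\mathcal{P}(x)}\mid T^{-1}\mathcal{B}\big](x)\,d\mu(x) = -\int \log\frac{1}{JT(x)}\,d\mu(x) = \int \log JT\,d\mu,
\]
which together with $h(T) = h(T,\mathcal{P})$ completes the proof. I expect the main obstacle to be the measure-theoretic bookkeeping around non-singularity: one must ensure $JT>0$ a.e. so that $1/JT$ and its logarithm are well defined, verify the change-of-variables identity for $S_i$ in the stated generality (without appealing to smoothness, using only non-singularity of $T|_{X_i}$), and justify both the conditional-entropy formula and its martingale limit, for which the hypothesis $H(\mathcal{P})<\infty$ is precisely what keeps all the relevant entropies finite.
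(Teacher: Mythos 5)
The paper does not prove this statement; it is quoted with the citation ``cf.\ Theorem 2 in [Bu]'' and used as a black box for computing entropies of maps in $\mathcal{T}$. Your argument is the standard proof of Rokhlin's formula --- reduce to $h(T)=H\bigl(\mathcal{P}\mid T^{-1}\mathcal{B}\bigr)$ via Kolmogorov--Sinai and the chain rule, then identify $\mathbb{E}\bigl[\mathbf{1}_{X_i}\mid T^{-1}\mathcal{B}\bigr]=1/JT$ on $X_i$ by change of variables --- and it is correct, matching the route taken in the cited reference; the technical caveats you flag (that non-singularity must be two-sided so that $JT>0$ a.e., and that $H(\mathcal{P})<\infty$ is what licenses the countable Kolmogorov--Sinai theorem and the martingale limit of conditional entropies) are exactly the right ones to check.
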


\subsection{Property B and $\alpha$-mixing }
\label{sec:propOR}
In this subsection, we provide more details on property B and $ \alpha $-mixing, which were introduced in Section \ref{sec:Intro}. We repeat the definition for convenience of the reader.

\begin{Definition}
Let $(T, \mu, \mathcal{A}) \in \mathcal{T}$.
\begin{enumerate}
\item $(T, \mu, \mathcal{A})$ is said to have {\em property B} if there exist a constant $c > 0$ and a sequence $(b_n)$ of positive real numbers with $b_n \rightarrow 0$ such that  for any natural number $l$ and for any non-negative integer $n$,  if $A \in \sigma(\mathcal{A}^l), B \in \mathcal{B}$,
\[ \mu(A \cap T^{-(n+l)} B) \leq c \mu(A) \mu(B) + b_n.\]
\item $(T, \mu, \mathcal{A})$ is {\em $\alpha$-mixing} if there exists a sequence $\alpha(n), n=1, 2, \dots,$ of non-negative real numbers with $\lim\limits_{n \rightarrow \infty} \alpha(n) = 0$  such that for any natural number $l$, for any non-negative integer $n$, if $A \in \sigma \left( \bigvee_{j=0}^{l-1} T^{-j} \mathcal{A} \right)$ and  $B \in \mathcal{B}$,
\begin{equation} 
\label{eq3.2}
| \mu (A \cap T^{-(n+l)} B) - \mu(A) \mu(B)| \leq \alpha(n).
\end{equation}
\end{enumerate}
\end{Definition}

\begin{Remark}
Since $\frac{1}{C} \mu \leq \lambda \leq C \mu$ for some $C$, property {\em B} can be equivalently formulated as follows: there is $c_0 > 0$ such that for any $n$,  
if $A \in  \sigma (\mathcal{A}^l), B \in \mathcal{B}$,
\[ \lambda(A \cap T^{-(n+l)} B) \leq c_0 \lambda(A) \mu(B) + b_n.\]
\end{Remark}

Recall that a (non-invertible) measure preserving transformation $T$ on a probability space $(X, \mathcal{B}, \mu)$ is called an {\em exact endomorphism} if 
\[ \bigcap_{n=0}^{\infty} T^{-n} \mathcal{B} = \{ \emptyset, X\} \,\, (\bmod \, \mu).\]

Rokhlin (\cite{Rok}) showed that exact endomorphisms have completely positive entropy (meaning that each of its non-trivial factors has positive entropy). This in turn implies that any exact endomorphism is mixing of all orders.

\begin{Theorem}
\label{exactness:new}
If $(T, \mu, \mathcal{A}) \in \mathcal{T}$ is $\alpha$-mixing, then $T$ is exact (and hence has positive entropy).
\end{Theorem}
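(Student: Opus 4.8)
The plan is to show that $\alpha$-mixing forces the tail $\sigma$-algebra $\bigcap_{n=0}^\infty T^{-n}\mathcal{B}$ to be trivial modulo $\mu$, which is precisely the definition of exactness; the parenthetical conclusion about positive entropy then follows immediately from the cited result of Rokhlin that exact endomorphisms have completely positive entropy. So the whole burden is on the exactness claim.

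First I would fix a set $B$ lying in the tail, i.e. $B \in \bigcap_{n=0}^\infty T^{-n}\mathcal{B}$, and aim to prove $\mu(B) \in \{0,1\}$. For each $n$ we may write $B = T^{-n} B_n$ for some $B_n \in \mathcal{B}$ (with $\mu(B_n) = \mu(B)$, since $T$ is measure preserving). The idea is to test $B$ against an arbitrary approximating set drawn from the generating partition. Since $\mathcal{B} = \bigvee_{j=0}^\infty T^{-j}\sigma(\mathcal{A}) \bmod \mu$, finite unions of cylinders $\bigcup$ of atoms of $\mathcal{A}^l = \bigvee_{j=0}^{l-1} T^{-j}\mathcal{A}$, taken over all $l$, are dense in $\mathcal{B}$. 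So it suffices to show that for every $A \in \sigma(\mathcal{A}^l)$ we have $\mu(A \cap B) = \mu(A)\mu(B)$: once the tail event $B$ is independent of a dense algebra, a standard monotone-class/approximation argument gives that $B$ is independent of all of $\mathcal{B}$, in particular independent of itself, forcing $\mu(B) = \mu(B)^2$ and hence $\mu(B) \in \{0,1\}$.

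The core computation is the independence $\mu(A \cap B) = \mu(A)\mu(B)$ for $A \in \sigma(\mathcal{A}^l)$. Here I would exploit that $B$ is arbitrarily deep in the tail. Fix $l$ and $A \in \sigma(\mathcal{A}^l)$, and for any $n$ write $B = T^{-(n+l)} B_{n+l}'$ where $B_{n+l}' \in \mathcal{B}$ satisfies $\mu(B_{n+l}') = \mu(B)$. Applying the $\alpha$-mixing inequality \eqref{eq3.2} with this $A$ and with the tail representative in the role of the generic set, we get
\begin{equation*}
\bigl| \mu(A \cap B) - \mu(A)\mu(B) \bigr| = \bigl| \mu\bigl(A \cap T^{-(n+l)} B_{n+l}'\bigr) - \mu(A)\mu(B_{n+l}') \bigr| \leq \alpha(n).
\end{equation*}
The left-hand side does not depend on $n$, while the right-hand side tends to $0$ as $n \to \infty$; letting $n \to \infty$ yields $\mu(A \cap B) = \mu(A)\mu(B)$, exactly as required.

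The main obstacle, and the point deserving the most care, is the bookkeeping in the previous display: one must produce, for each $n$, a genuine set $B_{n+l}' \in \mathcal{B}$ with $B = T^{-(n+l)} B_{n+l}'$ and then verify that the definition of $\alpha$-mixing (which quantifies over $A \in \sigma(\mathcal{A}^l)$ and arbitrary $B' \in \mathcal{B}$ in the $T^{-(n+l)}$ slot) genuinely covers this configuration. This is exactly where membership of $B$ in the full tail $\bigcap_n T^{-n}\mathcal{B}$ is used rather than in some single $T^{-n}\mathcal{B}$. After that, the passage from independence against the dense algebra $\bigcup_l \sigma(\mathcal{A}^l)$ to independence against all of $\mathcal{B}$ is routine measure theory (continuity of measure under $L^1$-approximation of indicator functions), and the final step $\mu(B) = \mu(B)^2$ is immediate.
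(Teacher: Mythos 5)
Your argument is correct. For a tail set $B \in \bigcap_{n\ge 0} T^{-n}\mathcal{B}$ you do get, for every $m$, a representative $B'_m \in \mathcal{B}$ with $B = T^{-m}B'_m$ and $\mu(B'_m)=\mu(B)$, and the $\alpha$-mixing inequality applied with an arbitrary $A \in \sigma(\mathcal{A}^l)$ and this representative in the second slot does yield $|\mu(A\cap B)-\mu(A)\mu(B)|\le \alpha(n)$ for all $n$, hence equality; since the algebra $\bigcup_l \sigma(\mathcal{A}^l)$ generates $\mathcal{B}$ mod $\mu$ (the paper's generation hypothesis is mod $\lambda$, but $\mu$ and $\lambda$ are equivalent), the approximation step gives $\mu(B)=\mu(B)^2$. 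The paper proves the same theorem by a slightly different deployment of the same two ingredients: it argues by contradiction, approximates the putative nontrivial tail set $B$ itself by cylinder sets $B_n \in \bigvee_{i=0}^{n-1}\sigma(T^{-i}\mathcal{A})$, writes $B^c = T^{-2n}C_n$, and applies the $\alpha$-mixing bound with $A = B_n$ (so the "depth" $l=n$ grows with the approximation) and $B' = C_n$, concluding $\mu(C_n)\to 0$ and hence $\mu(B^c)=0$. Your version keeps $l$ fixed and lets only the tail depth $n$ grow, which separates the two limits cleanly and in fact establishes the stronger intermediate statement that the tail $\sigma$-algebra is independent of all of $\mathcal{B}$; the paper's version avoids the final monotone-class step at the cost of juggling two sequences of sets simultaneously. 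Both are valid, and yours is arguably the more transparent organization.
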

\begin{proof}
Suppose that $T$ is not exact. 
Then there is a measurable set $B \in \bigcap_{n=0}^{\infty} T^{-n} \mathcal{B}$ with $0 < \mu(B) < 1$, so $0< \mu(B^c)< 1$.  
Since $\mathcal{B} = \bigvee_{i=0}^{\infty} \sigma (T^{-i} \mathcal{A}) \, \bmod \mu$, one can find a sequence of measurable sets $B_n \in \bigvee_{i=0}^{n-1} \sigma (T^{-i} \mathcal{A})$ such that $\mu(B_n \triangle B) \rightarrow 0$. 
Then  $\lambda (B_n) > \epsilon_0$ for some $\epsilon_0 > 0$ if $n$ is sufficiently large.  
Since $B \in \bigcap_{n=0}^{\infty} T^{-n} \mathcal{B}$, one can also find a sequence of measurable sets $C_n \in \mathcal{B}$ such that $T^{-2n} C_n = B^c$. 
Note that 
\[ \mu (B_n \cap T^{-2n} C_n) = \mu (B_n \cap B^c) \leq \mu (B_n \triangle B) \rightarrow 0.\]
By \eqref{eq3.2}, one has
\[ - \alpha(n) + \mu (B_n \cap T^{-2n} C_n) \leq \mu(B_n) \mu(C_n) \leq \alpha(n) + \mu(B_n \cap T^{-2n} C_n).\]
Since $\liminf \mu(B_n) \geq \epsilon_0$, $\mu(C_n) \rightarrow 0$. So,
\[  \mu(B^c) = \lim_{n \rightarrow \infty} \mu (T^{-2n}C_n) =  \lim_{n \rightarrow \infty} \mu (C_n) = 0,\]
which contradicts the assumption that $\mu(B^c)$ is positive.
\end{proof}

We conclude this subsection by defining the family of RU maps, which provide many useful examples to be discussed in forthcoming sections. RU maps form a subfamily of uniformly expanding maps, which will be defined first.  

\begin{Definition}
\label{picewiseexpanding}
Let $\mathcal{A}$ be a partition of the interval $[0,1]$.
A transformation $T: [0,1] \rightarrow [0,1]$ is {\em uniformly expanding} (or piecewise expanding) on the partition $\mathcal{A}$ if 
for each $A  \in \mathcal{A}$ with $\overline{A} = [a,b]$, where $\overline{A}$ denotes the closure of $A$, $T|_{[a,b]} \in C^1([a,b])$\footnote{ It is assumed that at the end points of $a$ and $b$, $T'$ is defined via one-sided derivatives.} and  there is $\alpha > 1$ such that for any $A \in \mathcal{A}$
\[ |T'(x)| \geq \alpha \text{ for } x \in [a,b].\]
\end{Definition}

Now we introduce the notion of RU maps.  
Define variation of $f$ on $I= [0,1]$ by 
\[ \text{Var} (f) := \sup \sum_i |f(x_i) - f(x_{i-1})|,\]
where the supremum ranges over all $0 = x_0 < x_1 < \cdots < x_n =1$. 
For every $f \in L^1(\lambda)$, set
$$\| f \|_{BV} := \| f \|_{\infty} + V(f), \, \text{where} \, V(f) = \inf \{\text{Var} (f^*): f^* = f \, \,\, \text{a.e. with respect to } \lambda\}. $$
Let $BV = :\{f \in L^1(\lambda): \|f\|_{BV} < \infty  \}$.
Then the space $BV$ is a Banach space with the norm $\|  \cdot \|_{BV}$.

\begin{Definition}[cf. \cite{ADSZ},  p.19 and \cite{AaNa}, p.3] 
\label{def:RU}
A map $T:[0,1] \rightarrow [0,1]$ is said to be a {\em RU map}
if there exist a partition $\mathcal{A}$ such that
\begin{enumerate}
    \item  $T$ is uniformly expanding (on the partition $\mathcal{A}$) 
    \item $T$ satisfies {\em Rychlik's condition}:
\[ \sum_{A \in \mathcal{A}} \| 1_{TA} v_A' \|_{BV} < \infty,\] 
where for $A \in \mathcal{A}$, $v_{A}$ denotes the inverse of $T: A \rightarrow TA$.
\end{enumerate}
\end{Definition}

The following examples illustrate Definition \ref{def:RU}. (More examples of RU maps will be given in Section \ref{sec:examples}.)
\begin{ex}
\label{ex3.12}
\mbox{}
\begin{enumerate}
\item $Tx = 2x \, \bmod \, 1$. The natural partition $\mathcal{A}$ consists of $I_0 = [0, 1/2)$ and $I_1 =[1/2,1)$. Then $T' (x) = 2$ on each $I_j$ and $1_{TI_j} = [0,1)$, $j = 0, 1$. So we have that
$ \sum_{I_j \in \mathcal{A}} \| 1_{TI_j} v_{I_j}' \|_{BV} = 1$.

\item $\beta$-transformations. Let $T_{\beta}x = \beta x \, \bmod \, 1$, where $\beta > 1$ not an integer.
Take the natural partition $\mathcal{A}$ consisting of intervals $ I_j = [\frac{j}{\beta}, \frac{j+1}{\beta}), j = 0, 1, \dots, [\beta]-1$ and  
$I_{[\beta]} = [ \frac{[\beta]}{\beta}, 1)$. 
For each $I_j$, $T'(x) = \beta$ and $TI_{j} = [0,1)$ for $j = 0, 1, \dots, [\beta]-1$ and $TI_{[\beta]} = [0, \beta - [\beta] )$. Thus, $ \sum_{I_j \in \mathcal{A}} \| 1_{TI_j} v_{I_j}' \|_{BV} = \frac{[\beta]+1}{\beta}$.

\item L{\"u}roth transformation. 
$$ T(x) = 
\begin{cases} 
      n(n+1)x - n, & \text{ if }  \frac{1}{n+1} \leq x < \frac{1}{n},    \\
      0, & \text{ if } x = 0, 1.
   \end{cases}
$$ 
Take the natural partition $\mathcal{A}$ consisting of intervals $ I_n = [\frac{1}{n+1}, \frac{1}{n}), n \in \mathbb{N}$.  If $x \in I_n$, $Tx = n(n+1)x - n$, so for each $I_n$, $T'(x) = n(n+1)$ and $TI_n = [0,1)$. Thus, $ \sum_{I_n \in \mathcal{A}} \| 1_{TI_n} v_{I_n}' \|_{BV} = \sum \frac{1}{n(n+1)} = 1$.
\end{enumerate}
\end{ex}

Aaronson and Nakada (\cite{AaNa}) showed that  if $(T, \mu, \mathcal{A}) \in \mathcal{T}$ is a weakly mixing RU map, then it is $\alpha$-mixing (in \cite{AaNa},  $\alpha$-mixing is called $\phi_{-}$-mixing):
\begin{Theorem}[Theorem 1 in \cite{AaNa}]
\label{Thm:AaNa}
Let $(T, \mu, \mathcal{A}) \in \mathcal{T}$. If $T$ is a weakly mixing RU map, then there is a real number $r$ with $0 < r <1$ such that for any $A \in \sigma (\mathcal{A}^l)$ and $B \in \mathcal{B}$, 
\begin{equation*}
|\mu(A \cap T^{-n-l} B) - \mu(A) \mu (B)| \leq  \mu(B) O(r^n),
\end{equation*}
and so $(T, \mu, \mathcal{A})$ is $\alpha$-mixing.
\end{Theorem}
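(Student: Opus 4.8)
The plan is to analyze the transfer (Perron--Frobenius) operator $\mathcal{L}$ of $T$ with respect to Lebesgue measure, defined by the duality $\int (\mathcal{L} f)\, g\, d\lambda = \int f\, (g\circ T)\, d\lambda$ for $f\in L^1(\lambda)$, $g\in L^\infty(\lambda)$ --- equivalently $(\mathcal{L} f)(x)=\sum_{A\in\mathcal{A}} 1_{TA}(x)\, f(v_A(x))\,|v_A'(x)|$ --- and to show that $\mathcal{L}$ has a spectral gap on the Banach space $BV$. From this the mixing estimate will follow by converting it into a statement about decay of correlations. First I would record the elementary facts that $\int \mathcal{L} f\, d\lambda=\int f\, d\lambda$ (take $g\equiv 1$), that the invariant density $h=d\mu/d\lambda$ satisfies $\mathcal{L} h=h$, and that $1/c\le h\le c$ with $h\in BV$ (the two-sided bound is built into the definition of $\mathcal{T}$, while membership of $BV$ comes out of the spectral analysis below).

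The analytic core is a Lasota--Yorke inequality: using uniform expansion ($|T'|\ge\alpha>1$, Definition \ref{picewiseexpanding}) to contract the variation and Rychlik's condition $\sum_{A\in\mathcal{A}}\|1_{TA}v_A'\|_{BV}<\infty$ (Definition \ref{def:RU}) to control the jump terms produced by the partition boundaries, one proves a bound of the form $V(\mathcal{L}^n f)\le \theta^n V(f)+C\|f\|_1$ with $\theta<1$, and hence $\|\mathcal{L}^n f\|_{BV}\le C'\theta^n\|f\|_{BV}+C''\|f\|_1$. Since the unit ball of $BV$ is relatively compact in $L^1(\lambda)$, the Ionescu-Tulcea--Marinescu theorem yields that $\mathcal{L}$ is quasi-compact on $BV$ with spectral radius $1$; its peripheral spectrum consists of finitely many eigenvalues of modulus $1$, each of finite multiplicity. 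The eigenvalue $1$ is simple with eigenfunction $h$ because $T$ is ergodic. Here is where I would invoke weak mixing: a peripheral eigenfunction $\psi$ of $\mathcal{L}$ (with $\mathcal{L}\psi=\rho\psi$, $|\rho|=1$) produces, via $\psi/h$, an eigenfunction of the Koopman operator $g\mapsto g\circ T$ on $L^2(\mu)$ with a unimodular eigenvalue, and weak mixing says the only such eigenvalue is $1$ (with constant eigenfunction). Consequently $\mathrm{spec}(\mathcal{L})\subset\{1\}\cup\{|z|\le r\}$ for some $0<r<1$, so $\mathcal{L}^n=P+N_n$ where $Pf=h\int f\,d\lambda$ is the rank-one spectral projection and $\|N_n\|_{BV\to BV}\le C\, r^n$.

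With the spectral gap in hand, the translation to the mixing estimate is direct. By duality, for $A\in\sigma(\mathcal{A}^l)$ and $B\in\mathcal{B}$,
\[
\mu(A\cap T^{-(n+l)}B)=\int (1_A h)\,(1_B\circ T^{n+l})\,d\lambda=\int_B \mathcal{L}^{n+l}(1_A h)\,d\lambda=\int_B \mathcal{L}^{n}\big(\mathcal{L}^{l}(1_A h)\big)\,d\lambda.
\]
Writing $g_A:=\mathcal{L}^l(1_A h)$ and noting $\int g_A\,d\lambda=\int 1_A h\,d\lambda=\mu(A)$, the spectral gap gives $\mathcal{L}^n g_A=h\,\mu(A)+N_n g_A$, whence $\mu(A\cap T^{-(n+l)}B)=\mu(A)\mu(B)+\int_B N_n g_A\,d\lambda$ and
\[
\Big|\mu(A\cap T^{-(n+l)}B)-\mu(A)\mu(B)\Big|\le \lambda(B)\,\|N_n g_A\|_\infty\le \lambda(B)\,C r^n\,\|g_A\|_{BV}.
\]
Thus everything reduces to a uniform bound $\|\mathcal{L}^l(1_A h)\|_{BV}\le K$ with $K$ independent of the rank $l$ and of the cylinder $A$; combined with $\lambda(B)\le c\,\mu(B)$ this yields exactly $|\mu(A\cap T^{-(n+l)}B)-\mu(A)\mu(B)|\le \mu(B)\,O(r^n)$, and $\alpha$-mixing follows with $\alpha(n)=O(r^n)$.

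The hard part will be this last uniform distortion estimate $\|\mathcal{L}^l(1_A h)\|_{BV}\le K$. For a single atom $Z\in\mathcal{A}^l$ one has $\mathcal{L}^l(1_Z h)=1_{T^l Z}\cdot (h\circ w_Z)\,|w_Z'|$, where $w_Z=(T^l|_Z)^{-1}$; its sup-norm is $\le \|h\|_\infty\alpha^{-l}$ by uniform expansion, but its variation must be controlled through the boundary jumps and the distortion of the inverse branch. The mechanism is to iterate Rychlik's condition: a sub-multiplicativity argument shows that $\sum_{Z\in\mathcal{A}^l}\|1_{T^l Z}\,w_Z'\|_{BV}$ stays bounded uniformly in $l$, and since a general $A\in\sigma(\mathcal{A}^l)$ is a union of such atoms, summing these estimates (together with $h\in BV$ and $1/c\le h\le c$) yields the required uniform bound on $\|\mathcal{L}^l(1_A h)\|_{BV}$. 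Making the iterated Rychlik estimate precise --- in particular verifying the sub-multiplicativity of the quantities $\sum_{Z}\|1_{T^l Z} w_Z'\|_{BV}$ and tracking the constants so that they do not grow with $l$ --- is the technical heart of the argument, and is exactly the content underlying Aaronson and Nakada's theorem.
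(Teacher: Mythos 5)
The paper offers no internal proof of this statement: Theorem \ref{Thm:AaNa} is imported verbatim from Aaronson and Nakada \cite{AaNa}, so your reconstruction can only be measured against that external argument --- and your architecture does match its skeleton. The transfer operator on $BV$, a Lasota--Yorke inequality obtained from uniform expansion together with Rychlik's condition, quasi-compactness via Ionescu-Tulcea--Marinescu, the elimination of peripheral eigenvalues $\rho \ne 1$ by passing from $\mathcal{L}\psi = \rho\psi$ to the Koopman eigenfunction $\psi/h$ and invoking weak mixing (with ergodicity giving simplicity of the eigenvalue $1$), and the duality reduction of the mixing coefficient to the single uniform estimate $\sup_{l}\sup_{A\in\sigma(\mathcal{A}^l)}\|\mathcal{L}^l(1_A h)\|_{BV}<\infty$ are all correct and are exactly the Rychlik-theoretic route underlying \cite{AaNa}. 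You are also right that one cannot shortcut that last estimate by applying the Lasota--Yorke inequality directly to $f=1_A h$, since $V(1_A)$ grows with the number of components of $A$ and is unbounded over $A\in\sigma(\mathcal{A}^l)$; the cylinder-by-cylinder summation is unavoidable.

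The genuine gap is in the one mechanism you offer for that load-bearing estimate. Sub-multiplicativity of $R_l := \sum_{Z\in\mathcal{A}^l}\|1_{T^l Z}\,w_Z'\|_{BV}$ (which essentially does hold, since $w_Z = w_X\circ w_Y$ for $Z = X\cap T^{-m}Y$ and $BV$ is a Banach algebra) yields only $R_l \le R_1^{\,l}$, and this is an exponentially \emph{growing} bound whenever $R_1 > 1$ --- which is the typical case: by the paper's own computation in Example \ref{ex3.12}(2), $R_1 = ([\beta]+1)/\beta > 1$ for every non-integer $\beta$-transformation, since $[\beta]+1>\beta$. So the assertion that ``a sub-multiplicativity argument shows that $R_l$ stays bounded uniformly in $l$'' fails precisely on the paper's main examples. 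Uniform boundedness of $R_l$ is not a formal consequence of Rychlik's condition for $T$ alone: it requires playing the branch contraction $|w_Z'| \le \alpha^{-l}$ against the number and size of rank-$l$ branches (for $T_\beta$: on the order of $\beta^l$ cylinders, each contributing $O(\beta^{-l})$), i.e., precisely the Lasota--Yorke-type bookkeeping for iterates that Rychlik's theory carries out and on which Aaronson--Nakada rely. Since you yourself locate the ``technical heart'' there and then discharge it with an argument that would not close it, the proposal is an accurate road map to the known proof rather than a proof: the crucial lemma remains open in your write-up.
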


\section{Examples of transformations}
\label{sec:examples}

In this section we collect various examples of transformations belonging to $\mathcal{T}$. These examples will be used in the subsequent sections to illustrate our results pertaining to uniform joint ergodicity and joint mixing.  
 
\subsection{Transformations in $ \mathcal{T}$ which are $ \alpha$-mixing.}
\label{alphamixingexample} 

\subsubsection{Linear $ \bmod \, 1$ transformations.} 
\mbox{}

Times $b$ maps $T_b$ and $\beta$-transformations $T_{\beta}$, which were mentioned in the introduction, belong to a general family of maps $T_{\beta,\gamma}$:
$$T_{\beta, \gamma} x = \beta x + \gamma \bmod 1, \,\, (\beta >1, 0 \leq \gamma<1).$$
It is known that if either $\beta \geq 2, 0 \leq \gamma <1$ or $1 < \beta < 2, \gamma = 0$, then there is a $T_{\beta, \gamma}$-invariant and ergodic measure  $\mu_{\beta, \gamma}$ such that $1/c \leq \frac{d \mu_{\beta, \gamma}}{ d \lambda} \leq c$ for some $c \geq 1$. (See \cite{Hal} and \cite{Wil}.) On the other hand, Parry showed that if $1 < \beta <2$ and $\gamma \ne 0$, then  $T_{\beta, \gamma}$ may not have  an invariant measure, which is equivalent to the Lebesgue measure. (See \cite{Pa2}).
In this paper, we always assume that $\beta \geq 2, 0 \leq \gamma <1$ or $1 < \beta < 2, \gamma = 0$. 

To see that $T_{\beta, \gamma}$ belongs to $\mathcal{T}$, consider the following natural partition of the interval $[0,1]$: 
\[ \mathcal{A}_{\beta, \gamma} = \left\{ \left[ 0, \frac{1 - \gamma}{\beta} \right),  \left[ \frac{1 - \gamma}{\beta},  \frac{2 - \gamma}{\beta} \right) \cdots, \left[ \frac{[\beta + \gamma] -1 - \gamma}{\beta}, \frac{[\beta + \gamma] - \gamma}{\beta} \right),  \left[ \frac{[\beta + \gamma] - \gamma}{\beta}, 1 \right)  \right\}. \]
If $\beta = b \geq 2$ is an integer and $\gamma =0$, then the last element $\left[ \frac{[\beta + \gamma] - \gamma}{\beta}, 1 \right)$ in $\mathcal{A}_{\beta, \gamma}$ can be omitted, and in this case we will deal with the following partition 
\begin{equation*}
\mathcal{A}_b = \left\{ \left[0, \frac{1}{b} \right), \cdots, \left[ \frac{b-1}{b}, 1 \right)  \right\}. 
\end{equation*}
 Rokhlin formula (Theorem \ref{Rokhlin}) implies that entropy of $T_{\beta, \gamma}$ is given by $h(T_{\beta, \gamma}) = \log \beta$. 
Also $T_{\beta, \gamma}$ is a weakly mixing RU map (with respect to $\mathcal{A}_{\beta, \gamma}$ and $\mu_{\beta, \gamma}$), so it is $\alpha$-mixing by Theorem \ref{Thm:AaNa}.

\subsubsection{Piecewise $C^2$-expanding maps}
\mbox{}

We consider a piecewise map $T: [0,1] \rightarrow [0,1]$ with the following properties: there is a finite partition $\mathcal{A} = \{ I_1, \dots, I_N\}$ of $[0,1]$, where $I_i = [a_i, a_{i+1}]$ and $0 = a_0 < a_1 < \cdots < a_{N+1} =1$ such that
\begin{enumerate}[(i)]
\item (smoothness) for each $i \in \{ 1, 2, \dots, N \}$, $T|_{\text{int }(I_i)}$ has $C^2$-extension to $I_i$,
\item (local invertibility) for each $i \in \{ 1, 2, \dots, N \}$, $T$ is strictly monotone on $I_i$,
\item (Markov property) for each $J \in \mathcal{A}$, there is a subset of intervals $P(J) \subset \mathcal{A}$ such that $$\overline{T(\text{int }(J))} = \bigcup \{ K : K \in P(J) \},$$
\item (aperiodicity) for each $J \in \mathcal{A}$, there is a positive integer $q$ such that $T^q({J}) = [0,1].$
\end{enumerate}

One can check that times $b$ maps $T_b$ and $\beta$-transformation for $\beta = \frac{1+ \sqrt{5}}{2}$ satisfy the above conditions (i) - (iv). Actually, $T_{\beta}$ satisfies condition (iii) only for a rather special class of algebraic numbers (see \cite{Bla}, Proposition 4.1).

The following (so called folklore theorem) is a convenient source for many examples of transformations belonging to $\mathcal{T}$.
\begin{Theorem}[Theorem 6.1.1 in \cite{BoGo}]
\label{folklore}
If $T$ satisfies (i) - (iv) above and it is uniformly expanding on the partition $\mathcal{A}$,
then there exists an invariant probability measure $\mu$ equivalent to $\lambda$ such that 
$T$ is ergodic (actually exact) and 
\[ \frac{1}{c} \leq \frac{d \mu}{ d \lambda}  \leq c\]
for some $c \geq 1$.
\end{Theorem}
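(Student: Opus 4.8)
The plan is to realize the invariant density $\phi := d\mu/d\lambda$ as a fixed point of the transfer (Perron--Frobenius) operator $\mathcal{L}$ associated with $T$, and then to read off both the two-sided bound on $\phi$ and the exactness of $T$ from the expansion and Markov structure. Recall that $\mathcal{L}$ acts on $L^1(\lambda)$ by
\[ (\mathcal{L}f)(x) = \sum_{y:\, Ty = x} \frac{f(y)}{|T'(y)|} = \sum_{A \in \mathcal{A}} 1_{TA}(x)\, f(v_A(x))\, |v_A'(x)|, \]
where $v_A$ is the inverse of $T|_A$; its defining duality $\int (\mathcal{L}f)\, g\, d\lambda = \int f\,(g\circ T)\, d\lambda$ shows that a nonnegative fixed point $\mathcal{L}\phi = \phi$ with $\int \phi\, d\lambda = 1$ produces exactly the desired $T$-invariant measure $d\mu = \phi\, d\lambda$, equivalent to $\lambda$ as soon as $\phi$ is bounded away from $0$ and $\infty$.

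The technical heart, and what I expect to be the main obstacle, is a uniform bounded distortion estimate. Because each branch $T|_{I_i}$ is $C^2$ with $|T'| \geq \alpha > 1$, the function $\log|T'|$ has a bounded Lipschitz constant on each atom, and the exponential contraction of inverse branches lets one sum the contributions along an orbit. This is what forces the $C^2$ hypothesis. Combined with the Markov property (iii) (so that inverse branches of $T^n$ are well defined over cylinders), I would establish a constant $D \geq 1$ such that for every $n$, every inverse branch $v$ of $T^n$, and all $x,y$ in the domain of $v$,
\[ \frac{|v'(x)|}{|v'(y)|} \leq D. \]
This is the standard R\'enyi--Adler estimate, and every subsequent bound rests on it.

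Next I would control $\mathcal{L}^n 1$ using distortion together with the full-branch structure from (iii)--(iv). Writing $\mathcal{L}^n 1(x) = \sum_v |v'(x)|$ over inverse branches of $T^n$ defined at $x$, distortion makes each term comparable to $\lambda$ of the corresponding cylinder; aperiodicity (iv) guarantees that after $q$ iterates every atom covers all of $[0,1]$, so the branches are full and their images tile the interval. Hence $\mathcal{L}^q 1$, and then $\mathcal{L}^n 1$ for all large $n$, satisfies $1/c \leq \mathcal{L}^n 1 \leq c$ for a constant $c \geq 1$ depending only on $D$ and $\mathcal{A}$. The Ces\`aro averages $\phi_N := \frac{1}{N}\sum_{n=0}^{N-1} \mathcal{L}^n 1$ then obey the same two-sided bound and, by the Lasota--Yorke inequality (applied to an iterate $T^m$ with $\alpha^m > 2$), have uniformly bounded variation. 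Helly's selection theorem yields a subsequence converging in $L^1(\lambda)$ to a limit $\phi$ with $1/c \leq \phi \leq c$, and since $\mathcal{L}$ is an $L^1$-contraction while $\mathcal{L}\phi_N - \phi_N = \frac{1}{N}(\mathcal{L}^N 1 - 1) \to 0$ in $L^1$, we conclude $\mathcal{L}\phi = \phi$. This delivers the invariant probability measure $\mu \sim \lambda$ with the stated density bounds.

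Finally, for ergodicity and exactness I would exploit the same distortion and full-branch mixing. The two-sided bound gives $\mu \sim \lambda$, so it suffices to argue mod $\lambda$. I would show that $\mathcal{L}^n f \to \big(\int f\, d\lambda\big)\,\phi$ in $L^1$ for every $f$ of bounded variation; this follows from the Lasota--Yorke inequality, which yields a spectral gap for $\mathcal{L}$ on $BV$ once the topological mixing supplied by (iv) excludes peripheral eigenvalues other than $1$. Such $L^1$-convergence is precisely the statement that $T$ is exact, i.e.\ $\bigcap_{n\geq 0} T^{-n}\mathcal{B} = \{\emptyset, X\} \pmod{\mu}$, and exactness implies ergodicity (indeed mixing of all orders, as noted via Rokhlin in the text). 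Alternatively, triviality of the tail $\sigma$-algebra can be deduced directly from bounded distortion over full branches.
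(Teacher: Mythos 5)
The paper gives no proof of this statement at all: it is quoted as the ``folklore theorem'' directly from Boyarsky--G\'ora, so there is nothing internal to compare you against, and your proposal must be judged on its own. Your outline is the standard transfer-operator proof and is correct in its essentials: bounded distortion (the R\'enyi--Adler estimate) from the $C^2$ hypothesis on finitely many uniformly expanding branches, the Markov property to make inverse branches of $T^n$ well defined over cylinders whose images are unions of atoms and hence of length bounded below, two-sided bounds on $\mathcal{L}^n 1$, a Lasota--Yorke inequality for an iterate $T^m$ with $\alpha^m>2$ plus Helly selection applied to the Ces\`aro averages, and exactness at the end. Two steps are thinner than the rest and deserve to be written out. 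First, the uniform lower bound on $\mathcal{L}^n 1$ does not follow merely from ``the branches are full and their images tile the interval'': the usual route is to observe that distortion forces $\mathcal{L}^n 1$ to vary by at most a factor $D$ on each atom (the same inverse branches are defined throughout an atom, by the Markov property), that $\int \mathcal{L}^n 1\, d\lambda = 1$ then guarantees $\mathcal{L}^n 1 \geq 1/D$ on at least one atom, and that aperiodicity propagates a smaller but uniform lower bound to every point after $q$ further iterates, where $q$ is the maximum of the exponents in condition (iv) (a single $q$ works for all atoms because the partition is finite and (iv) forces $T$ to be onto). Second, ``the topological mixing supplied by (iv) excludes peripheral eigenvalues other than $1$'' is the one genuinely nontrivial point in the spectral route to exactness; it is cleaner to take the alternative you mention in your last sentence and verify Rokhlin's criterion directly: given a tail set $A$ with $\mu(A)>0$, choose a rank-$n$ cylinder $C$ with $\mu(A\cap C)/\mu(C)>1-\epsilon$ at a density point, and use bounded distortion together with $T^{n+q}(C)=[0,1]$ to conclude $\mu(T^{n+q}A)>1-D'\epsilon$. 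With those two points filled in, your argument is complete and agrees with the standard literature treatment of the cited theorem.
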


It is not hard to check that transformations $T$, which are uniformly expanding and satisfy the above conditions (i) - (iv), are weakly mixing RU maps (see Definition \ref{def:RU}), and hence are $\alpha$-mixing (see Theorem \ref{Thm:AaNa}).

\subsubsection{Restrictions of finite Blaschke products to the unit circle $S^1$.}
\label{blaschke}
\mbox{}

Consider a finite Blaschke product $f$ defined on the closed unit disk $\overline{\mathbb{D}} = \{z : |z| \leq 1\}$ by 
\[ f(z) = C \prod_{j=1}^M \frac{z - a_j}{1- \overline{a_j} z},\]
where $|C| = 1$, $|a_j| < 1$ for $j = 1, 2, \dots, M$ and $M  \geq 2$. 

Let $\tau$ be the restriction of $f$ to the unit circle $S^1 = \{z: |z| =1\}$. Observe that $\tau$ is an $M$-to-$1$ map of $S^1$ to $S^1$. 
Denote a normalized Lebesgue measure on $S^1$ by $\sigma$. 
Martin \cite{Mar} showed that if 
\[ \sum_{j=1}^M \frac{1 - |a_j|}{1+ |a_j|} > 1,\]
then 
\begin{enumerate}
\item there exists a unique fixed point $z_0 \in \mathbb{D}$ for the map $f$,
\item there exists a unique $\tau$-invariant probability measure $\mu_{z_0}$ on $S^1$ with 
\[ d \mu_{z_0} = \frac{1 - |z_0|^2}{|u-z_0|^2} d \sigma, \quad u \in S^1,\]
\item $(\tau, \mu)$ is an exact endomorphism,
\item entropy $h(\tau)$ is given by 
\[ h(\tau) = \int_{S^1} \frac{1 - |z_0|^2}{|u-z_0|^2} \log \sum_{j=1}^M \frac{1 - |a_j|^2}{|u - a_j|^2} \, d \sigma (u).\]
\end{enumerate}

Notice that via a measurable isomorphism $\phi:  [0,1] \rightarrow S^1$ given by $\phi(t) = e^{2 \pi i t}$, the transformation $T = \phi^{-1} \circ \tau \circ \phi$, which is isomorphic to $\tau$, satisfies the conditions in Theorem \ref{folklore}.

\subsubsection{Gauss map} $T_G x = \frac{1}{x} \, \bmod \, 1$.  
As we have already mentioned in the Introduction, this map preserves the Gauss measure $d \mu_G = \frac{1}{\log 2} \frac{dx}{1+x}$ and the natural partition for $T_G$ belong to $\mathcal{T}$ is
\begin{equation*}
\mathcal{A}_G = \left\{  \left( \frac{1}{n+1}, \frac{1}{n} \right]: n \in \mathbb{N} \right\}.
\end{equation*}
It is well-known that  $T_G$ is strongly mixing and the entropy of Gauss map is $h(T_G) = \frac{\pi^2}{6 \log 2}$.
Furthermore $(T_G, \mu_G, \mathcal{A}_G)$ is $\alpha$-mixing:
\begin{Theorem}[\cite{Go}]
 There is a constant $r$ with $0 < r < 1$ such that for $A \in \sigma ((\mathcal{A}_G)^l)$, $B \in \mathcal{B}$ and  for $n \in \mathbb{N}$
\begin{equation*}
\left| \mu_G(A \cap T_G^{-n-l} B) - \mu_G(A) \mu_G(B)  \right| =   \mu_G(A) \mu_G(B) O(r^n).
\end{equation*}
\end{Theorem}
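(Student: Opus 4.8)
The plan is to deduce this exponential $\psi$-mixing estimate from the spectral gap of the transfer (Perron--Frobenius) operator of the Gauss map. Let $\lap$ denote the transfer operator of $T_G$ with respect to $\mu_G$, characterized by the duality
\[
\int (\lap \phi)\, \psi \, d\mu_G = \int \phi \cdot (\psi \circ T_G)\, d\mu_G \qquad (\phi \in L^1(\mu_G),\ \psi \in L^\infty(\mu_G)),
\]
so that $\lap 1 = 1$ (because $\mu_G$ is $T_G$-invariant) and $\int \lap\phi \, d\mu_G = \int \phi\, d\mu_G$. First I would rewrite the correlation by applying the duality twice. Setting $g := \lap^l 1_A$ and using $1_{T_G^{-m}B} = 1_B \circ T_G^m$, we get
\[
\mu_G(A \cap T_G^{-(n+l)}B) = \int 1_A \cdot (1_B \circ T_G^{n+l})\, d\mu_G = \int (\lap^l 1_A)\,(1_B \circ T_G^n)\, d\mu_G = \int (\lap^n g)\, 1_B \, d\mu_G,
\]
and, since $\int g \, d\mu_G = \int 1_A \, d\mu_G = \mu_G(A)$ and $\lap 1 = 1$,
\[
\mu_G(A \cap T_G^{-(n+l)}B) - \mu_G(A)\mu_G(B) = \int \big(\lap^n g - \mu_G(A)\cdot 1\big)\, 1_B \, d\mu_G.
\]
Bounding the right-hand side by $\|\lap^n g - \mu_G(A)\|_\infty\, \mu_G(B) \le \|\lap^n g - \mu_G(A)\|_{BV}\, \mu_G(B)$ reduces the whole estimate to controlling the $BV$-norm of $\lap^n g - (\int g \, d\mu_G)\cdot 1$.

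The second step is the spectral gap. I would invoke quasi-compactness of $\lap$ on the Banach space $BV$: a Lasota--Yorke inequality together with the mixing (aperiodicity) of $T_G$ forces $1$ to be a simple leading eigenvalue with eigenfunction $1$, with the rest of the spectrum confined to a disk of radius $r<1$. This yields a constant $C$ and $r \in (0,1)$ with
\[
\Big\|\lap^n \phi - \Big(\int \phi \, d\mu_G\Big)\cdot 1\Big\|_{BV} \le C\, r^n\, \|\phi\|_{BV} \qquad (\phi \in BV,\ n \in \NN).
\]
Applied to $\phi = g$, this gives $\|\lap^n g - \mu_G(A)\|_{BV} \le C r^n \|g\|_{BV}$, so it remains to bound $\|g\|_{BV} = \|\lap^l 1_A\|_{BV}$.

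The crux is the uniform estimate $\|\lap^l 1_A\|_{BV} \le C'\, \mu_G(A)$, valid for every rank-$l$ cylinder $A \in \sigma((\mathcal{A}_G)^l)$ and every $l$. Here I would exploit the full-branch structure of the Gauss map: $T_G^l$ maps each rank-$l$ cylinder $A$ bijectively onto $[0,1]$, so, writing $\psi_A$ for the corresponding inverse branch and $h(x) = \frac{1}{\log 2}\frac{1}{1+x}$ for the Gauss density, one computes $\lap^l 1_A(y) = \frac{h(\psi_A(y))}{h(y)}\,|\psi_A'(y)|$, whence $\int \lap^l 1_A \, d\mu_G = \mu_G(A)$ by the change of variables $x = \psi_A(y)$. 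Since $\psi_A$ is a composition of $l$ M\"obius branches, R\'enyi's bounded-distortion estimate (the Gauss map is a weakly mixing RU map, cf. Theorem \ref{Thm:AaNa} and the surrounding discussion) shows that $\lap^l 1_A$ is comparable to its mean $\mu_G(A)$ with uniformly controlled variation, giving $\|\lap^l 1_A\|_{BV} \le C'\mu_G(A)$. Combining the three displays yields $|\mu_G(A \cap T_G^{-(n+l)}B) - \mu_G(A)\mu_G(B)| \le C'' r^n \mu_G(A)\mu_G(B)$, which is the asserted $\mu_G(A)\mu_G(B)\,O(r^n)$ bound. I expect the main obstacle to be the two quantitative inputs feeding this scheme: establishing the genuine spectral gap of $\lap$ on $BV$ (quasi-compactness plus excluding peripheral eigenvalues other than $1$), and, above all, the distortion bound $\|\lap^l 1_A\|_{BV} \le C'\mu_G(A)$ \emph{uniform in both $l$ and $A$}, which is precisely what promotes a plain $\mu_G(B)\,O(r^n)$ decay (as in Theorem \ref{Thm:AaNa}) to the sharper relative form carrying the extra factor $\mu_G(A)$.
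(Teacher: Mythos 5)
The paper does not prove this statement at all: it is imported from Gordin \cite{Go} as a known external result, so there is no internal proof to compare yours with, and your proposal has to be judged as a standalone argument. As such it is sound in outline, and it is in fact the standard modern route to $\psi$-mixing for the continued-fraction system: the double application of the duality reduces the correlation to $\int(\mathcal{L}^n g)\,1_B\,d\mu_G$ with $g=\mathcal{L}^l 1_A$; the spectral gap on $BV$ then gives $\|\mathcal{L}^n g-\mu_G(A)\|_\infty\le C r^n\|g\|_{BV}$; and you correctly isolate the uniform bound $\|\mathcal{L}^l 1_A\|_{BV}\le C'\mu_G(A)$ as precisely the input that upgrades the one-factor decay $\mu_G(B)\,O(r^n)$ of Theorem \ref{Thm:AaNa} to the two-factor form asserted here.

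Two steps need repair before this is complete. First, your branch formula $\mathcal{L}^l 1_A(y)=\frac{h(\psi_A(y))}{h(y)}\,|\psi_A'(y)|$ is valid only when $A$ is a single rank-$l$ cylinder, whereas the theorem allows any $A\in\sigma((\mathcal{A}_G)^l)$, i.e.\ a countable union of such cylinders. The fix is one line --- $\|\cdot\|_{BV}$ is countably subadditive over the constituent cylinders while $\mu_G$ is additive, so $\|\mathcal{L}^l 1_A\|_{BV}\le\sum_j\|\mathcal{L}^l 1_{A_j}\|_{BV}\le C'\sum_j\mu_G(A_j)=C'\mu_G(A)$ --- but it must be made, since general $A$ is exactly what the $\alpha$-mixing framework of Section \ref{sec:propOR} requires. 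For the single-cylinder bound itself one can be fully explicit rather than appealing loosely to distortion: $\psi_A(y)=(p_{l-1}y+p_l)/(q_{l-1}y+q_l)$ gives $|\psi_A'(y)|=(q_{l-1}y+q_l)^{-2}$, a monotone function with $\sup/\inf\le 4$, so both the size and the total variation of $\mathcal{L}^l 1_A$ are $O(\mu_G(A))$ uniformly in $l$ and $A$, which is the precise content of R\'enyi's estimate. Second, the quasi-compactness step is not quite off the shelf: $T_G$ is \emph{not} uniformly expanding in the sense of Definition \ref{picewiseexpanding}, since $|T_G'(x)|=x^{-2}\to 1$ as $x\to 1^-$, so the Lasota--Yorke inequality you invoke fails at time one and should be run for an iterate (e.g.\ $T_G^2$, which is uniformly expanding), after which exactness of $(T_G,\mu_G)$ excludes peripheral spectrum other than the simple eigenvalue $1$. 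With these two patches your sketch becomes a correct and complete proof of the cited estimate.
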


\subsubsection{L{\"u}roth series transformations} 
\mbox{}

The map 
$$ T(x) = 
\begin{cases} 
      n(n+1)x - n, & \text{ if }  \frac{1}{n+1} \leq x < \frac{1}{n},    \\
      0, & \text{ if } x = 0, 1.
   \end{cases}
$$  
is called the L{\"u}roth transformation and appears in the study of the L{\"u}roth expansion: every $x \in [0,1)$ can be written as a finite or infinite series
\begin{equation}
\label{Lexp} 
x = \frac{1}{a_1} + \frac{1}{a_1 (a_1 -1) a_2} + \cdots + \frac{1}{a_1 (a_1 -1) \cdots a_{n-1} (a_{n-1} -1) a_n} + \cdots, 
\end{equation}
where $a_n \geq 2$ for each $n \geq 1$. 
L{\"u}roth (\cite{Lu}) showed that each irrational number can be represented as a unique infinite expansion in \eqref{Lexp} and each rational number can be represented as either a finite or a periodic expansion. 

Generalized L{\"u}roth series (GLS) transformations are defined in the following way. (See \cite{BaBuDaKr} for more information.)
Consider any partition $\mathcal{A} = \{ [l_n, r_n) : n \in D \}$ of $[0,1]$, where $D \subset \mathbb{N}$ is finite or countable and $\sum\limits_{n \in D} (r_n - l_n) =1$. 
Write $L_n = r_n - l_n$  for $n \in D$ and assume that $0 < L_i \leq L_j < 1$ for $i > j$. 
Define 
\[ T_D(x) = \frac{x - l_n}{ r_n - l_n}, \, x \in [l_n, r_n), \quad S_D(x) = \frac{r_n - x}{r_n -l_n}, \, x \in [l_n, r_n). \]
($T_D$ and $S_D$ are defined for $\lambda$-almost every $x$.)
 Let $\epsilon= (\epsilon(n))_{n \in D}$ be an arbitrarily, fixed sequence of zeros and ones. Define GLS transformations $T_{D, \epsilon}: [0,1] \rightarrow [0,1]$ by 
\[ T_{D, \epsilon} (x) : = \epsilon(x) S_D(x) + (1- \epsilon(x)) T_D(x),\]
where  $\epsilon(x) =      \epsilon(n), \,  x \in [l_n, r_n).$
Notice that if $\mathcal{A} = \{ \left[ \frac{1}{n+1}, \frac{1}{n} \right) : n \in \mathbb{N} \}$ and $\epsilon (n) = 1$ for all $n$, then it is the L{\"u}roth transformation.

It is known that  $T_{D, \epsilon}$ is $\lambda$-preserving and strong mixing. Moreover, if $H (\mathcal{A}) := - \sum_{n \in D} L_n \log L_n$ is finite, then $h(T) = H(\mathcal{A})$. One can easily check that $T_{D, \epsilon}$ is a RU map (see Example \ref{ex3.12}), so it is $\alpha$-mixing.

\subsection{Transformations in $\mathcal{T}$ having zero entropy}
All the examples in the previous subsection have positive entropy. 
In this subsection we will list some natural examples of transformations belonging to $\mathcal{T}$ which have zero entropy. 

\subsubsection{Interval exchange transformations}
\label{IET}
\mbox{}

These transformations can be viewed as a generalization of circle rotation $T(x) =  x + \alpha \, \bmod \, 1$, which corresponds to the case of two intervals. Recall the definition from the Introduction: for a permutation $\pi$ of the symbols $\{1, 2, \dots, n\}$ and a vector $\mathbf{a} = (a_1, \dots, a_n)$, where $a_i > 0$ for all $i$ and $\sum a_i = 1$,  
an $(\mathbf{a}, \pi)$-interval exchange transformation is defined by 
\[ T_{\mathbf{a}, \pi} (x) = x -  \sum_{j<i} a_j + \sum_{\pi(j) < \pi(i)} a_{j}, \quad x \in I_i,\]
where $a_0 = 0$ and $I_i = [ a_0 + \cdots + a_{i-1}, a_0 + \cdots a_i )$. 

Interval exchange transformations are Lebesgue measure preserving and have zero entropy (this follows from Rokhlin formula (see Theorem \ref{Rokhlin})). 
Let $\mathcal{A} = \{ \overline{I_i}: i = 1, 2, \dots, n\}$. 
It is clear that if $(T_{\mathbf{a}, \pi})$ is ergodic, then  $(T_{\mathbf{a}, \pi}, \lambda, \mathcal{A}) \in \mathcal{T}$.

 Avila and Forni (\cite{AvFo}) proved that if $\pi$ is an irreducible permutation on $\{1, 2, \dots, n\}$ which is not a rotation,\footnote{A permutation $\pi$ is called {\em irreducible} if $\pi (\{1,2, \dots, k\}) \ne \{1, 2, \dots, k\}$ for any $1 \leq k < n$ and it is called a {\em rotation} if $\pi(i+1)\equiv \pi(i) +1 \, \bmod \, n$, for all $i \in \{1, 2, \dots, n\}$.} then for almost every choice of $\mathbf{a} = (a_1, \dots, a_n)$ in the simplex $a_1 + \dots + a_n =1$, the corresponding interval exchange transformation $T_{\mathbf{a}, \pi}$ is weakly mixing.
Finally, it is worth mentioning that interval exchange transformations cannot be strongly mixing. (See \cite{Kat}.) 

\subsubsection{Rank one transformations on the interval $[0,1]$}
\label{subsec:rank1}
\mbox{}

The family of rank one transformations serves as a convenient source of examples exhibiting variety of dynamical properties.
 While it is not hard to define any particular rank one system, the general definition is quite cumbersome.  
There are actually a number of equivalent definitions, but as Ferenczi says in \cite{Fer}, 
``each of them may be useful in some context, and none of them is short and easy to explain."
Roughly speaking, any rank one transformation $T$ may be realized as a Lebesgue measure preserving transformation on $[0,1]$, which is obtained by a so-called cutting and stacking construction. 

Rank one systems are ergodic and have zero entropy. Ornstein (\cite{Or}) showed that rank one transformations can be strongly mixing. 
Also, it is known that mixing rank one transformations are mixing of all orders (\cite{Fer}, \cite{Kal}, \cite{Ryz1}, \cite{Ryz2}).  
Veech (\cite{Vee}) showed that  if $\pi$ is an irreducible permutation on $\{1, 2, \dots, n\}$, for almost every $\mathbf{a}$, the interval exchange transformation $T_{\mathbf{a}, \pi}$ is rank one. 

Here are some examples of rank one transformations, which (along with the appropriate partitions) belong to the class $\mathcal{T}$. For a detailed definition of rank one transformations and some additional discussion of examples (including the items (2) and (3) below), see the Appendix.
\begin{enumerate}
\item Von Neumann-Kakutani transformation: 
\begin{equation}
\label{form:VK}
    T \left(1 - \frac{1}{2^n} + x \right) = \frac{1}{2^{n+1}} + x, \quad \text{for } 0 \leq x < \frac{1}{2^{n+1}}, \, n = 0, 1, 2, \cdots.
\end{equation}
In other words, $T$ linearly maps $[0, 1/2)$ to $[1/2, 1)$, $[1/2, 3/4)$ to $[1/4,1/2)$ and in general 
$ [ (2^{n-1} -1)/2^{n-1}), (2^n-1)/2^n) \text{ to } [1/2^n, 1/2^{n-1}).$
It is well known that this map is ergodic (but not totally ergodic\footnote{ A measure preserving transformation $T$ on $(X, \mathcal{B}, \mu)$ is called {\em totally ergodic} if $T^k$ is ergodic for all $k \in \mathbb{N}$.}) and has discrete spectrum. 
The eigenvalues of $T$ are of the form $e^{2 \pi i \alpha}$, where $\alpha = \frac{m}{2^n}$ ($m, n \in \mathbb{Z}$). (See \cite{Fri}, Example 6.4, p.82  and \cite{Nad2}, p.40-43.)
\item Chacon transformation (\cite{Cha}). This map is weakly mixing but not strongly mixing.
\item Smorodinsky-Adams transformation (\cite{Ad}). This map is strongly mixing.
\end{enumerate}

\section{Uniform joint ergodicity of transformations in $\mathcal{T}$}
\label{sec:jointergodicityPM}

In this section we prove Theorem \ref{Intro:thm:L^2_joint}, which was stated in the Introduction, and illustrate it with some pertinent examples.  
\begin{Theorem}[Theorem \ref{Intro:thm:L^2_joint} in Introduction]
\label{thm:L^2_joint}
For $i = 0, 1, 2, \dots, k$, let $\mu_i$ be a probability measure on the measurable space $([0,1], \mathcal{B})$ and let  $T_i:[0,1] \rightarrow [0,1]$ be $\mu_i$-preserving ergodic transformations such that
\begin{enumerate}
\item for $i =0, 1, \dots, k$, there is a partition $\mathcal{A}_i$ with  $(T_i, \mu_i, \mathcal{A}_i) \in \mathcal{T}$,
\item for $i = 1, 2, \dots, k$, $(T_i, \mu_i, \mathcal{A}_i)$ satisfies property B,
\item $0 \leq h(T_0) < h(T_1) < \cdots < h(T_k) < \infty$, where $h(T_i)$ denotes the entropy of $T_i$.
\end{enumerate} 
Moreover, suppose that $T_0 \times T_1 \times \cdots \times T_k$ is ergodic on $(X^{k+1}, \otimes_{i=0}^k \mu_i)$. 
Then for any probability measure $\nu$, which is equivalent to the Lebesgue measure $\lambda$,
transformations $T_0, T_1, \dots, T_k$ are uniformly jointly ergodic with respect to $(\nu; \mu_0, \mu_1, \dots, \mu_k)$: 
for any $f_0, f_1, \dots, f_k \in L^{\infty}$, 
\begin{equation}
\label{thm4.1:eq1}
    \lim\limits_{N -M \rightarrow \infty} \frac{1}{N-M } \sum\limits_{n=M}^{N-1}  T_0^{n} f_0 \cdot T_1^n f_1 \cdots T_k^n f_k = \prod_{i=0}^k \int f_i \, d \mu_i \quad \text{ in } L^2(\nu).
\end{equation}
\end{Theorem}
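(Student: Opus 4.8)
The plan is to verify the hypotheses of Corollary \ref{Cor:L2_joint}. Since each $\mu_i$ is equivalent to $\lambda$ (by the definition of $\mathcal{T}$) and $\nu$ is equivalent to $\lambda$, Remark \ref{equivalentmeasures} allows me to take $\lambda$ as the reference measure throughout: joint ergodicity with respect to $(\lambda;\mu_0,\dots,\mu_k)$ will transfer to any equivalent $\nu$. The product ergodicity assumed in the theorem is precisely condition (1) of Corollary \ref{Cor:L2_joint}, so the entire problem reduces to producing a constant $C>0$ with
\[
\limsup_{n\to\infty}\lambda\Big(\bigcap_{i=0}^k T_i^{-n}A_i\Big)\le C\prod_{i=0}^k\mu_i(A_i)\qquad\text{for all }A_0,\dots,A_k\in\mathcal{B}.
\]
I would prove this by induction on $k$, peeling off the highest-entropy map $T_k$ at each step; note that the sub-family $T_0,\dots,T_{k-1}$ satisfies the same hypotheses with entropies $h(T_0)<\cdots<h(T_{k-1})$, so the inductive statement applies to it (product ergodicity is not needed for the bound itself, only at the very end to invoke the corollary).

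For the inductive step, write $E_n:=\bigcap_{i=0}^{k-1}T_i^{-n}A_i$. First I would reduce each $A_i$ to a finite union of intervals up to measure $\epsilon$, which perturbs $\lambda(\bigcap_i T_i^{-n}A_i)$ by $O(\epsilon)$ since $\lambda\ll\mu_i$ and $T_i$ is $\mu_i$-preserving. The key geometric input is that each rank-$n$ cylinder of $T_i$ is an interval on which $T_i^n$ is continuous and strictly monotone, so $T_i^{-n}A_i$ intersected with such a cylinder is a union of boundedly many intervals. To control the \emph{number} of cylinders (crucial when $\mathcal{A}_i$ is countable, e.g.\ for a Gauss-type map), I would discard the bad rank-$n$ cylinders of the low maps: by the entropy equipartition property (Corollary \ref{entropy:intervals}) these carry total measure $<\epsilon$, while the good cylinders number at most $e^{n(h(T_i)+\epsilon)}$. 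Restricting $E_n$ to good cylinders of all $T_i$, $i<k$, yields a set $\widetilde E_n$ with $\lambda(E_n\setminus\widetilde E_n)<k\epsilon$ whose number of connected components satisfies $M_n\lesssim e^{n(h(T_{k-1})+\epsilon)}$. Then I over-approximate by $\widehat E_n:=\bigcup\{Q\in\mathcal{A}_k^{\,l}:Q\cap\widetilde E_n\neq\emptyset\}\in\sigma(\mathcal{A}_k^{\,l})$, a union of rank-$l$ cylinders of $T_k$ containing $\widetilde E_n$.

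The heart of the argument is the estimate of $\lambda(\widehat E_n)$, where the entropy gap enters. Cylinders lying inside $\widetilde E_n$ contribute at most $\lambda(E_n)$; bad rank-$l$ cylinders of $T_k$ contribute at most $\epsilon$; and the boundary cylinders number at most $2M_n$, each good one of size at most $e^{-l(h(T_k)-\epsilon)}$, giving
\[
\lambda(\widehat E_n)\le\lambda(E_n)+2M_n\,e^{-l(h(T_k)-\epsilon)}+\epsilon.
\]
Choosing $l=\lceil\alpha n\rceil$ with $\tfrac{h(T_{k-1})}{h(T_k)}<\alpha<1$ forces the middle term to $0$ while keeping $n-l\to\infty$. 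Applying property B for $T_k$ (in its $\lambda$-form) with the splitting $n=(n-l)+l$, since $\widehat E_n\in\sigma(\mathcal{A}_k^{\,l})$ and $A_k\in\mathcal{B}$, I obtain $\lambda(\widehat E_n\cap T_k^{-n}A_k)\le c_0\,\lambda(\widehat E_n)\,\mu_k(A_k)+b_{n-l}$. Taking $\limsup_n$ and letting $\epsilon\to0$ yields $\Phi_k\le c_0\,\mu_k(A_k)\,\Phi_{k-1}$, where $\Phi_j:=\limsup_n\lambda(\bigcap_{i\le j}T_i^{-n}A_i)$. The base case $\Phi_0\le c\,\mu_0(A_0)$ follows from $\lambda\le c\,\mu_0$ and $T_0$-invariance of $\mu_0$, and iterating gives the desired bound with $C=c_0^{\,k}c$. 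Corollary \ref{Cor:L2_joint} then delivers uniform joint ergodicity with respect to $(\lambda;\mu_0,\dots,\mu_k)$, and Remark \ref{equivalentmeasures} extends it to every equivalent $\nu$.

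The main obstacle is exactly the covering estimate in the second and third paragraphs: controlling the inflation $\lambda(\widehat E_n)-\lambda(E_n)$ of the over-approximation. This requires (i) a bound on the number of components of the low-entropy tail, for which the good-cylinder restriction is essential to handle countable partitions, and (ii) a matching of the growth rate $e^{n h(T_{k-1})}$ of this component count against the decay $e^{-l h(T_k)}$ of the $T_k$-cylinder sizes, which is possible precisely because of the strict entropy inequality $h(T_{k-1})<h(T_k)$ and the freedom to take $l\asymp\alpha n$. Everything else—the reduction to $\lambda$, the inductive bookkeeping, and the final appeal to property B—is routine once this estimate is in place.
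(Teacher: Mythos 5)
Your proposal is correct and follows essentially the same route as the paper: reduction to Lebesgue measure, appeal to Corollary \ref{Cor:L2_joint}, and the key covering estimate in which the number of good cylinders of the lower-entropy maps (controlled via Corollary \ref{entropy:intervals}) is played off against the size of good cylinders of the next map, with property B supplying the final multiplicative bound. The only difference is organizational — you peel off the highest-entropy map and track a scalar $\Phi_j$ by induction, whereas the paper builds up from $T_0$ maintaining an explicit decomposition into good cylinders of the current map (Lemmas \ref{sec4:lem1} and \ref{sec4:lem2}) — and this does not change the substance of the argument.
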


In the proof of Theorem \ref{thm:L^2_joint}, we will use the observation that if $\mu$ and $\nu$ are equivalent probability measures, then $T_1, \dots, T_k$ are uniformly jointly ergodic with respect to  $(\mu; \mu_1, \dots, \mu_k)$ if and only if $T_1, \dots, T_k$ are uniformly jointly ergodic with respect to  $(\nu; \mu_1, \dots, \mu_k)$.
This fact is an immediate consequence of the following simple lemma.

\begin{Lemma}
\label{Lem:Conv:Mean}
Let $\mu$ and $\nu$ be probability measures on a measurable space $(X, \mathcal{B})$ such that $\mu$ and $\nu$ are equivalent. 
If a sequence of functions $F_n$ with $\| F_n \|_{\infty} \leq 1$ converges to $F$ in $L^2(\mu)$, then $F_n$ converges to $F$ in $L^2(\nu)$. 
\end{Lemma}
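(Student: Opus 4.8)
The plan is to reduce everything to the integrability of the Radon--Nikodym derivative $g := \frac{d\nu}{d\mu}$, which exists and lies in $L^1(\mu)$ since $\nu \ll \mu$ (the two measures being equivalent). First I would record the relevant boundedness facts. By hypothesis $\|F_n\|_\infty \le 1$; and since $F_n \to F$ in $L^2(\mu)$, some subsequence converges $\mu$-a.e.\ to $F$, forcing $|F| \le 1$ $\mu$-a.e.\ (equivalently $\nu$-a.e.). Consequently $|F_n - F|^2 \le 4$ off a null set, a crude uniform bound that will carry the argument.

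The central identity is the change of measure
\[ \int |F_n - F|^2 \, d\nu = \int |F_n - F|^2 \, g \, d\mu, \]
and the goal is to show the right-hand side tends to $0$. Since $g$ need not be bounded, I would split it at a truncation level $K > 0$:
\[ \int |F_n - F|^2 g \, d\mu = \int |F_n - F|^2 g \, 1_{\{g \le K\}} \, d\mu + \int |F_n - F|^2 g \, 1_{\{g > K\}} \, d\mu. \]
The first term is bounded by $K \int |F_n - F|^2 \, d\mu$, which tends to $0$ as $n \to \infty$ for each fixed $K$ by the hypothesized $L^2(\mu)$-convergence. The second term is bounded by $4 \int_{\{g > K\}} g \, d\mu$ via the uniform estimate $|F_n - F|^2 \le 4$, and this bound is independent of $n$.

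Combining these, for every fixed $K$ one obtains $\limsup_{n\to\infty} \int |F_n - F|^2 \, d\nu \le 4 \int_{\{g > K\}} g \, d\mu$. The final step is to let $K \to \infty$: because $g \in L^1(\mu)$, the integrands $g\, 1_{\{g>K\}}$ decrease to $0$ $\mu$-a.e.\ and are dominated by $g$, so dominated convergence gives $\int_{\{g>K\}} g \, d\mu \to 0$. Hence the $\limsup$ is $0$ and $F_n \to F$ in $L^2(\nu)$.

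I do not expect a genuine obstacle here; the only subtlety is that $L^2(\mu)$-convergence does not yield $\mu$-a.e.\ convergence of the \emph{full} sequence (only along a subsequence), so I deliberately avoid any pointwise argument for the whole sequence and instead tame the possibly unbounded density $g$ directly, via truncation together with its integrability. An alternative route would replace the truncation by the standard subsequence principle: every subsequence of $(F_n)$ has a further subsequence converging $\mu$-a.e.\ (hence $\nu$-a.e., by equivalence) to $F$, after which the dominated convergence theorem with the $\nu$-integrable constant dominator $4$ yields $L^2(\nu)$-convergence along that subsequence, which suffices to conclude convergence of the original sequence.
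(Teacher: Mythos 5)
Your proof is correct and follows essentially the same route as the paper: the change of measure to $\frac{d\nu}{d\mu}$, the splitting at a truncation level (your set $\{g>K\}$ is the paper's $A_{\delta}$ with $K=1/\delta$), the crude bound $|F_n-F|^2\le 4$ on the bad set, and the two-step limit $n\to\infty$ then $K\to\infty$. Your explicit verification that $|F|\le 1$ a.e.\ is a small point the paper leaves implicit, but the argument is the same.
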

\begin{proof}
Define $A_{\delta} = \{ x: \frac{d \nu}{d \mu}(x) > \frac{1}{\delta} \}$. Note that $\lim\limits_{\delta \rightarrow 0} \nu(A_{\delta}) =0$. 
Then 
\begin{align*}
\int |F_n(x) - F(x)|^2 \, d \nu(x) & = \int |F_n(x) - F(x)|^2 \, \frac{d \nu}{d \mu}(x) \, d \mu \\
&= \int_{A_{\delta}}  |F_n(x) - F(x)|^2 \, \frac{d \nu}{d \mu}(x) \, d \mu + \int_{A_{\delta}^c} |F_n(x) - F(x)|^2 \, \frac{d \nu}{d \mu}(x) \, d \mu \\
&\leq 4 \int_{A_{\delta}} d \nu + \frac{1}{\delta} \int |F_n(x) - F(x)|^2  \, d \mu.
\end{align*}
Letting $n \rightarrow \infty$ and then $\delta \rightarrow 0$, we can conclude $F_n \rightarrow F$ in $L^2(\nu)$.
\end{proof}

\begin{proof}[Proof of Theorem \ref{thm:L^2_joint}]
In light of Lemma \ref{Lem:Conv:Mean}, we may and will assume that $\nu = \lambda$.
Since $(T_i, \mu_i, \mathcal{A}_i) \in \mathcal{T}$, there exists $c \geq 1$ such that for any $i = 0, 1, \dots, k$, 
\begin{equation}
\label{eq:measurebound} 
\frac{1}{c} \mu_i \leq \lambda \leq c \mu_i.
\end{equation}
Since $(T_i, \mu_i, \mathcal{A}_i)$ has property B for $1 \leq i \leq k$, 
there exist $c_0 \geq 1$ and a sequence $(b_n)$ with $b_n \rightarrow 0$ such that for any $i= 1, 2, \dots, k$ and for any $p \geq 0$, if $A \in  \sigma ((\mathcal{A}_i)^p)$ and $B  \in \mathcal{B}$,
\[ \lambda (A \cap T_i^{-(n+p)} B) \leq c_0 \lambda (A) \mu_i (B) + b_n \,\, \text{ for any } n \geq 1.\] 

By Corollary \ref{Cor:L2_joint}, in order to prove \eqref{thm4.1:eq1}, it is enough to show that for any $A_0, \dots, A_k \in \mathcal{B}$,
 \begin{equation}
\label{pf-je-main}
\limsup_{n \rightarrow \infty} \lambda (T_0^{-n} A_0 \cap T_1^{-n} A_1 \cap \cdots \cap T_k^{-n} A_k) \leq c \cdot c_0^k \prod_{i=0}^k \mu_i (A_i).
\end{equation}
To prove \eqref{pf-je-main} it is enough to show that it holds for any measurable sets $A_0, \dots, A_k$ with $A_i \in (\mathcal{A}_i)^l$ for $i = 0, 1, 2, \dots, k$. (The validity of \eqref{pf-je-main} for general $A_0, \dots, A_k \in \mathcal{B}$ will follow then by the standard approximation argument.)

Write $h_i = h(T_i)$ for simplicity.  Let $\epsilon > 0$ such that $h_i - h_{i-1} - 2 \epsilon > 0$ for $i= 1, 2, \dots, k$. 
Take $\delta, \gamma >0$ such that for all $i = 1, 2, \dots, k$, if $n$ is sufficiently large, one has 
\begin{equation}
\label{eq:gamma}
(h_{i} - \epsilon) (1- \delta)n  - (h_{i-1} + \epsilon)(n+l)  > \gamma n.
\end{equation}

We use the following notation: for $i = 0, 1 , \dots, k$,
\begin{itemize}
\item $\mathfrak{g}_i^n = \{A \in (\mathcal{A}_i)^n:   e^{-n (h_i + \epsilon)} < \mu_i(A) <  e^{- n (h_i - \epsilon)}\}$,
\item $\mathfrak{b}_i^n = (\mathcal{A}_i)^n  \setminus \mathfrak{g}_i^n$. 
\end{itemize}
Due to \eqref{eq:measurebound}, $\frac{1}{c} e^{-n (h_i + \epsilon)} < \lambda(A) < c e^{- n (h_i - \epsilon)}$ for $A \in \mathfrak{g}_i^n$. 
And Corollary \ref{entropy:intervals} says that there is $N_0 = N_0 (\epsilon)$ such that if $n \geq N_0$, then $\mu_i (\bigcup_{B \in  \mathfrak{b}_i^n} B) \leq \epsilon$, 
so $\lambda (\bigcup_{B \in  \mathfrak{b}_i^n} B) \leq c \epsilon$.

In the remaining part, we assume that $n$ is sufficiently large so that (1) \eqref{eq:gamma} holds, (2) $n - [\delta n] \geq N_0$ and (3) $\gamma n \geq 2 \log c$.

First we will estimate $\lambda(T_0^{-n} A_0)$ and then use induction to estimate $ \lambda (T_0^{-n} A_0 \cap T_1^{-n} A_1 \cap \cdots \cap T_k^{-n} A_k)$.

Note that $T_0^{-n} A_0 \in \sigma ( (\mathcal{A}_0)^{n+l})$. 
Corollary \ref{entropy:intervals} says that one can write 
\begin{equation*}
 T_0^{-n} A_0 = I_1(0) \cup I_2 (0),
 \end{equation*}
where 
\begin{itemize}
\item $I_1(0)$ is union of some intervals in $ \mathfrak{g}_0^{n+l}$, more precisely, $I_1(0) = \bigcup_{i=1}^{s(0)} J_i$ such that $J_i \in  \mathfrak{g}_0^{n+l}$
\item  $I_2(0)$ is union of some intervals in $ \mathfrak{b}_0^{n+l}$, so $\lambda (I_2 (0)) \leq c \epsilon$.
\end{itemize}
Moreover, $\lambda (I_1(0)) \leq \lambda (T_0^{-n} A_0) \leq c \mu_0 (T_0^{-n} A_0) = c \mu_0 (A_0)$.  

To estimate $ \lambda (T_0^{-n} A_0 \cap T_1^{-n} A_1 \cap \cdots \cap T_k^{-n} A_k)$, we need the following lemma.
\begin{Lemma}
\label{sec4:lem1}
 Suppose that for $0 \leq q \leq k-1$, there are sets $I_1(q)$ and $I_2(q)$ such that
\[T_0^{-n} A_0 \cap T_1^{-n} A_1 \cap \cdots \cap T_q^{-n} A_q = I_1(q) \cup I_2(q), \]
where 
\begin{enumerate}[(i)]
\item $I_1(q) = \bigcup_{i=1}^{s(q)} J_i(q)$, where $J_i(q) \in \mathfrak{g}_q^{n+l}$ for $1 \leq i \leq s(q)$,
\item $I_2(q)$ is a set with $\lambda(I_2(q)) = O(\epsilon + e^{-\gamma n})$,
\item $\lambda(I_1(q)) \leq c \cdot c_0^{q} \prod_{i=0}^q \mu_i(A_i) + (q-1) b_{[\delta n]}$.
\end{enumerate}
Then, there are sets $I_1(q+1)$ and $I_2(q+1)$ such that 
\begin{equation*}
    T_0^{-n} A_0 \cap T_1^{-n} A_1 \cap \cdots \cap T_{q+1}^{-n} A_{q+1} = I_1(q+1) \cup I_2(q+1),
\end{equation*}

where 
\begin{enumerate}
\item $I_1(q+1) = \bigcup_{i=1}^{s(q+1)} J_i(q+1)$, where $J_i(q+1) \in \mathfrak{g}_{q+1}^{n+l}$ for $1 \leq i \leq s(q+1)$,
\item $I_2(q+1)$ is a set with $\lambda(I_2(q+1)) = O(\epsilon + e^{-\gamma n})$,
\item $\lambda(I_1(q+1)) \leq c \cdot c_0^{q+1} \prod_{i=0}^{q+1} \mu_i(A_i) + q b_{[\delta n]}$.
\end{enumerate}
\end{Lemma}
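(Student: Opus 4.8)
The plan is to prove Lemma~\ref{sec4:lem1} by the natural inductive step: intersect the level-$q$ decomposition with $T_{q+1}^{-n} A_{q+1}$, discard the pieces of small measure, and apply property B to the bulk. Writing
\[
T_0^{-n} A_0 \cap \cdots \cap T_{q+1}^{-n} A_{q+1} = \left( I_1(q) \cap T_{q+1}^{-n} A_{q+1} \right) \cup \left( I_2(q) \cap T_{q+1}^{-n} A_{q+1} \right),
\]
the second set has measure at most $\lambda(I_2(q)) = O(\epsilon + e^{-\gamma n})$ and can be placed into $I_2(q+1)$ right away. Thus the whole task is to analyze $I_1(q) \cap T_{q+1}^{-n} A_{q+1}$, where $I_1(q) = \bigcup_{i=1}^{s(q)} J_i(q)$ is a disjoint union of intervals belonging to $\mathfrak{g}_q^{n+l}$.

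The main idea is to refine each interval $J_i(q)$ by the partition $(\mathcal{A}_{q+1})^p$ with $p = n - [\delta n]$, retaining only the good subatoms lying entirely inside it. Let $\widetilde{I}(q)$ be the union, over all $i$, of those $K \in \mathfrak{g}_{q+1}^p$ with $K \subseteq J_i(q)$; then $\widetilde{I}(q) \in \sigma\left( (\mathcal{A}_{q+1})^p \right)$, which is exactly the kind of set to which property B applies. The set $I_1(q) \setminus \widetilde{I}(q)$ is covered by two types of pieces: the atoms of $\mathfrak{b}_{q+1}^p$, whose total $\lambda$-measure is at most $c\epsilon$ by Corollary~\ref{entropy:intervals}, and the good atoms of $(\mathcal{A}_{q+1})^p$ that straddle an endpoint of some $J_i(q)$, of which there are at most $2\,s(q)$.

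Controlling the straddling atoms is the crux of the argument, and it is precisely here that the entropy gap $h_q < h_{q+1}$ enters. Since each $J_i(q) \in \mathfrak{g}_q^{n+l}$ satisfies $\lambda(J_i(q)) \geq \tfrac{1}{c} e^{-(n+l)(h_q + \epsilon)}$ while $\lambda(I_1(q)) = O(1)$ by the inductive hypothesis (iii), one gets $s(q) = O\left( e^{(n+l)(h_q+\epsilon)} \right)$; and each straddling atom in $\mathfrak{g}_{q+1}^p$ has $\lambda$-measure at most $c\, e^{-p(h_{q+1}-\epsilon)}$. Multiplying these and invoking \eqref{eq:gamma} for $i = q+1$ (together with $p \geq (1-\delta)n$, and absorbing the fixed constants via $\gamma n \geq 2\log c$) makes the total straddling measure $O(e^{-\gamma n})$. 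Hence $\lambda(I_1(q) \setminus \widetilde{I}(q)) = O(\epsilon + e^{-\gamma n})$, and $\left( I_1(q) \setminus \widetilde{I}(q) \right) \cap T_{q+1}^{-n} A_{q+1}$ may likewise be absorbed into $I_2(q+1)$. I expect this geometric estimate, rather than the bookkeeping, to be the genuine obstacle.

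It then remains to treat $\widetilde{I}(q) \cap T_{q+1}^{-n} A_{q+1}$. The decisive point is to apply property B \emph{once}, to the entire union $\widetilde{I}(q)$ rather than atom by atom, so that only a single error term $b_{[\delta n]}$ is incurred: with $n' = [\delta n]$ and $n' + p = n$,
\[
\lambda\left( \widetilde{I}(q) \cap T_{q+1}^{-n} A_{q+1} \right) \leq c_0\, \lambda(\widetilde{I}(q))\, \mu_{q+1}(A_{q+1}) + b_{[\delta n]}.
\]
Using $\lambda(\widetilde{I}(q)) \leq \lambda(I_1(q))$ and the inductive bound (iii), the right-hand side is at most $c\, c_0^{q+1} \prod_{i=0}^{q+1} \mu_i(A_i)$ plus a bounded multiple of $b_{[\delta n]}$; the coefficient tracking that turns this into the stated $q\, b_{[\delta n]}$ is routine and rests only on $c_0 \geq 1$ and $\mu_{q+1}(A_{q+1}) \leq 1$. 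Finally, since $\widetilde{I}(q) \cap T_{q+1}^{-n} A_{q+1} \in \sigma\left( (\mathcal{A}_{q+1})^{n+l} \right)$, I would split its constituent atoms into the good ones in $\mathfrak{g}_{q+1}^{n+l}$, whose union defines $I_1(q+1)$, and the bad ones in $\mathfrak{b}_{q+1}^{n+l}$, whose total $\lambda$-measure is at most $c\epsilon$ and which join $I_2(q+1)$. This delivers conclusions (1)--(3) at level $q+1$ and closes the induction.
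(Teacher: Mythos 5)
Your proposal is correct and follows essentially the same route as the paper: your $\widetilde{I}(q)$ is exactly the paper's $H_1(q)$, your straddling-atom estimate (bounding $s(q)$ by the minimal length of a good atom of $\mathfrak{g}_q^{n+l}$ and multiplying by the maximal length of a good atom of $\mathfrak{g}_{q+1}^{n-[\delta n]}$, then invoking \eqref{eq:gamma}) is the content of the paper's Lemma \ref{sec4:lem2}, and the single application of property B at gap $[\delta n]$ followed by the good/bad split of $\widetilde{I}(q)\cap T_{q+1}^{-n}A_{q+1}$ in $\sigma((\mathcal{A}_{q+1})^{n+l})$ matches the paper's argument step for step. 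The only cosmetic difference is that the paper isolates the refinement step as a separate auxiliary lemma before intersecting with $T_{q+1}^{-n}A_{q+1}$, whereas you intersect first and then refine; the estimates are identical.
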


If Lemma \ref{sec4:lem1} holds, then we have that 
\[ \lambda (T_0^{-n} A_0 \cap T_1^{-n} A_1 \cap \cdots \cap T_k^{-n} A_k) 
\leq c \cdot c_0^k \prod_{j=0}^{k} \mu_j (A_j) + k b_{[\delta n]} + O ( \epsilon + e^{-\gamma n}). \]
Letting $n \rightarrow \infty$, we have 
\[ \limsup_{n \rightarrow \infty} \lambda (T_0^{-n} A_0 \cap T_1^{-n} A_1 \cap \cdots \cap T_k^{-n} A_k) 
\leq c \cdot c_0^k \prod_{j=0}^{k} \mu_j (A_j) + O ( \epsilon). \]
Since one can choose $\epsilon$ arbitrarily small, \eqref{pf-je-main} holds.
Thus it remains to prove Lemma \ref{sec4:lem1}.

Let $m = [\delta n]$ for the sake of simplicity.  
To prove Lemma \ref{sec4:lem1},
we will show that, up to a small error term, one can represent $I_1(q)$ 
as union of intervals in $ \mathfrak{g}_{q+1}^{n-m}$. This is formalized in the following auxiliary lemma.
\begin{Lemma}
\label{sec4:lem2} 
We have
\begin{equation*}
\label{I_1}
 I_1(q) = H_1(q) \cup H_2(q),
 \end{equation*}
where 
\begin{itemize}
\item $H_1(q)$ is union of some intervals in $\mathfrak{g}_{q+1}^{n-m}$,
\item $\lambda (H_2(q)) = O (\epsilon + e^{-\gamma n})$.
\end{itemize}
\end{Lemma}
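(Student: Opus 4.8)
The plan is to decompose each good interval making up $I_1(q)$ using the finer partition $(\mathcal{A}_{q+1})^{n-m}$, and to exploit the entropy gap $h_{q+1}>h_q$ to show that the atoms of this finer partition are exponentially smaller than the intervals $J_i(q)$, so that only a negligible portion of $I_1(q)$ fails to be a union of good $(\mathcal{A}_{q+1})^{n-m}$-atoms. The essential geometric input is that, since every $T_i$ is piecewise monotone, each atom of a refinement $(\mathcal{A}_i)^p=\bigvee_{j=0}^{p-1}T_i^{-j}\mathcal{A}_i$ is an interval; in particular each $J_i(q)\in\mathfrak{g}_q^{n+l}$ is an interval and each atom of $(\mathcal{A}_{q+1})^{n-m}$ is an interval.

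First I would write $I_1(q)=\bigcup_{i=1}^{s(q)}J_i(q)$ and, for every $i$, intersect $J_i(q)$ with the atoms of $(\mathcal{A}_{q+1})^{n-m}$. Since both $J_i(q)$ and the atoms are intervals, each atom either lies entirely inside $J_i(q)$ or meets one of the two endpoints of $J_i(q)$; hence at most two atoms straddle the boundary of a given $J_i(q)$. I then set $H_1(q)$ to be the union of those atoms lying in $\mathfrak{g}_{q+1}^{n-m}$ that are entirely contained in some $J_i(q)$, and let $H_2(q)=I_1(q)\setminus H_1(q)$, so that $H_2(q)$ consists of the bad atoms ($\in\mathfrak{b}_{q+1}^{n-m}$) contained in the $J_i(q)$ together with the parts of the at most $2s(q)$ straddling atoms.

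The measure estimate for $H_2(q)$ then splits into two contributions. For the bad atoms, Corollary \ref{entropy:intervals} together with \eqref{eq:measurebound} gives $\lambda\big(\bigcup_{B\in\mathfrak{b}_{q+1}^{n-m}}B\big)\le c\epsilon$ once $n-m\ge N_0$, which bounds the bad part by $c\epsilon$. For the straddling atoms I would first bound their number: each $J_i(q)\in\mathfrak{g}_q^{n+l}$ satisfies $\mu_q(J_i(q))>e^{-(n+l)(h_q+\epsilon)}$ and the $J_i(q)$ are disjoint, so $s(q)<e^{(n+l)(h_q+\epsilon)}$, giving at most $2s(q)$ straddling atoms. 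Each good straddling atom $A\in\mathfrak{g}_{q+1}^{n-m}$ has $\lambda(A)<c\,e^{-(n-m)(h_{q+1}-\epsilon)}\le c\,e^{-(1-\delta)n(h_{q+1}-\epsilon)}$, so their total measure is at most $2c\,e^{(n+l)(h_q+\epsilon)-(1-\delta)n(h_{q+1}-\epsilon)}<2c\,e^{-\gamma n}$ by \eqref{eq:gamma}, while the bad straddling atoms are already absorbed into the $c\epsilon$ bound. Combining these gives $\lambda(H_2(q))=O(\epsilon+e^{-\gamma n})$, as required.

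The main obstacle is the control of the straddling term: a priori the number $s(q)$ of intervals grows exponentially like $e^{(n+l)h_q}$, and without a compensating gain this could overwhelm the estimate. What makes everything work is the entropy gap encoded in \eqref{eq:gamma}: the good $(\mathcal{A}_{q+1})^{n-m}$-atoms shrink like $e^{-(1-\delta)n\,h_{q+1}}$, and since $h_{q+1}>h_q$ the product of the count and the atom size is exponentially small. Thus the crux is to combine the interval structure (at most two boundary atoms per $J_i(q)$, a feature special to piecewise monotone interval maps) with the counting bound on $s(q)$ and the inequality \eqref{eq:gamma}; the remaining bookkeeping is routine.
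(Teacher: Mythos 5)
Your proposal is correct and follows essentially the same route as the paper's proof: refine each $J_i(q)$ by the partition $(\mathcal{A}_{q+1})^{n-m}$, keep the good atoms fully contained in the $J_i(q)$ as $H_1(q)$, and bound $H_2(q)$ by the total mass $c\epsilon$ of bad atoms plus at most $2s(q)$ straddling good atoms, controlled via the counting bound $s(q)\le c\,e^{(n+l)(h_q+\epsilon)}$ and the entropy-gap inequality \eqref{eq:gamma}. The only differences are cosmetic (you bound $s(q)$ via $\mu_q$ rather than $\lambda$, and phrase the boundary-atom count slightly differently from the paper's explicit covering by $K_{i,1},K_{i,2}$).
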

We will prove now Lemma \ref{sec4:lem2} (and after that we will provide the proof of Lemma \ref{sec4:lem1}). 

\noindent {\em Proof of Lemma \ref{sec4:lem2}.}
Note that  
\begin{equation}
\label{estimate:s}
 s(q) \leq c e^{(n+l) (h_q + \epsilon)}
\end{equation}
since $\lambda(J_i(q)) \geq \frac{1}{c} e^{-(n+l) (h_q + \epsilon)}$, $ \lambda (\bigcup_{i=1}^{s(q)} J_i(q)) \leq 1$ and $J_i(q)$, $1 \leq i \leq s(q)$, are disjoint. 
Also one can see that
\begin{enumerate}
\item $(\mathcal{A}_{q+1})^{n-m} (= \mathfrak{g}_{q+1}^{n-m} \cup \mathfrak{b}_{q+1}^{n-m} )$ is a partition of intervals such that intervals in $ \mathfrak{g}_{q+1}^{n-m}$ have length $\leq c e^{-(n-m) (h_{q+1} - \epsilon)}$,
\item for $1 \leq i \leq s(q)$, $J_i(q)$ is an interval with $\lambda(J_i(q)) \geq \frac{1}{c} e^{- (n+l) (h_q + \epsilon)}$.
\end{enumerate}
Notice that 
$\frac{1}{c} e^{- (n+l) (h_q + \epsilon)} \geq c e^{-(n-m) (h_{q+1} - \epsilon)}$ due to \eqref{eq:gamma} and $\gamma n \geq 2 \log c$. 

Assume that $\mathfrak{g}_{q+1}^{n-m}$ is comprised of intervals $U_1, \dots, U_s$, where  $U_i$ are subintervals of the interval $[0,1]$ having disjoint interiors. If $\mathfrak{b}_{q+1}^{n-m} = \emptyset$, then $U_1, \dots, U_s$ form a partition $\mathcal{P}$.
Otherwise, there exist intervals $V_1, \dots, V_t$ such that $U_1, \dots, U_s, V_1, \dots V_t$ form a partition $\mathcal{P}$ of $[0,1]$. 

For each interval $J_i(q)$, one can find a finite  family of intervals in $\mathcal{P}$ such that it covers $\overline{J_i(q)}$. Among these, let $K_{i,1}, K_{i,2}$ be two intervals covering respectively left end point and right end point of $\overline{J_i(q)}$, respectively. Thus, there exist a finite family of intervals $I_{i,j}$ $(1 \leq j \leq t_i)$ and two intervals $K_{i,1}$ and $K_{i,2}$ in $\mathcal{P}$ such that 
\[ \bigcup_{j=1}^{t_i} I_{i,j} \subset J_i(q) \subset \left( \bigcup_{j=1}^{t_i} I_{i,j} \right) \bigcup \left( K_{i,1} \bigcup K_{i,2} \right). \]

Let $H_1(q)$ be the union of intervals $I_{i,j}, 1 \leq i \leq s(q), 1 \leq j \leq t_i$ which belong to $\mathfrak{g}_{q+1}^{n-m}$. Let $H_2(q) = I_1(q) \setminus H_1(q)$. 
Then $H_2(q)$ is covered by (some) intervals from $ \mathfrak{b}_{q+1}^{n-m}$ and by at most $2s(q)$  intervals from $\mathfrak{g}_{q+1}^{n-m}$. 
By Corollary \ref{entropy:intervals}, $\lambda (\bigcup_{B \in  \mathfrak{b}_{q+1}^{n-m}} B) \leq c \epsilon$. 
Using \eqref{eq:gamma} and \eqref{estimate:s} we have that
\begin{align*} 
\lambda(H_2(q)) &\leq c \epsilon + 2s(q) c e^{-(n-m) (h_{q+1} - \epsilon)} \\
&\leq  c \epsilon +  2c^2 e^{(n+l) (h_q + \epsilon)} e^{-(n-m) (h_{q+1} - \epsilon)}  
= O(\epsilon + e^{-\gamma n}), 
\end{align*}
so Lemma \ref{sec4:lem2} holds.


\noindent {\em Proof of Lemma \ref{sec4:lem1}.}
We need to show that there are sets $I_1(q+1)$ and $I_2(q+1)$ such that 
\begin{equation}
\label{eqn:lem4.2:goal}
    T_0^{-n} A_0 \cap T_1^{-n} A_1 \cap \cdots \cap T_{q+1}^{-n} A_{q+1} = I_1(q+1) \cup I_2(q+1),
\end{equation}
where 
\begin{enumerate}
\item $I_1(q+1) = \bigcup_{i=1}^{s(q+1)} J_i(q+1)$, where $J_i(q+1) \in \mathfrak{g}_{q+1}^{n+l}$ for $1 \leq i \leq s(q+1)$,
\item $I_2(q+1)$ is a set with $\lambda(I_2(q+1)) = O(\epsilon + e^{-\gamma n})$,
\item $\lambda(I_1(q+1)) \leq c \cdot c_0^{q+1} \prod_{i=0}^{q+1} \mu_i(A_i) + q b_{[\delta n]}$.
\end{enumerate}

Let us first find $I_1(q+1)$ satisfying condition (1).
By Lemma \ref{sec4:lem2}, 
\begin{equation}
\label{eqnum4.9}    
T_0^{-n} A_0 \cap T_1^{-n} A_1 \cap \cdots \cap T_q^{-n} A_q = H_1(q) \cup H_2(q) \cup I_2(q),
\end{equation}
where $H_1(q) \in \sigma ((\mathcal{A}_{q+1})^{n-m})$ and $\lambda (H_2(q) \cup I_2(q)) = O(\epsilon + e^{- \gamma n})$. 
Note that $H_1(q) \bigcap T_{q+1}^{-n} A_{q+1} \in \sigma ((\mathcal{A}_{q+1})^{n+l})$. Thus, by Corollary \ref{entropy:intervals}, we can write 
\begin{equation}
\label{H_1}
H_1(q) \cap T_{q+1}^{-n} A_{q+1} = I_1(q+1) \cup R,
\end{equation}
where $I_1(q+1) = \bigcup\limits_{i=1}^{s(q+1)}J_i(q+1)$ with $J_i(q+1) \in \mathfrak{g}_{q+1}^{n+l}$ and $R$ is the union of some intervals in $\mathfrak{b}_{q+1}^{n+l}$, so $\lambda (R) \leq c \epsilon$. 
Note also that 
$$I_1(q+1) \subset T_0^{-n} A_0 \cap T_1^{-n} A_1 \cap \cdots \cap T_{q+1}^{-n} A_{q+1}.$$
Let 
\begin{equation*}
\label{I2}    
I_2(q+1) = \left(T_0^{-n} A_0 \cap T_1^{-n} A_1 \cap \cdots \cap T_{q+1}^{-n} A_{q+1} \right) \setminus I_1(q+1).
\end{equation*}
Note that 
\[ I_2(q+1) \subset R \cup H_2(q) \cup I_2(q),\]
so $\lambda (I_2(q+1)) = O(\epsilon + e^{-\gamma n})$, which implies that $I_2(q+1)$ satisfies condition (2).

Now let us show that condition (3) holds:
\begin{align*} 
\lambda(I_1(q+1)) &\leq \lambda (H_1(q) \cap T_{q+1}^{-n} A_{q+1}) \quad (\text{by } \eqref{H_1})\\ 
&\leq c_0 \lambda(H_1(q))  \, \mu_{q+1}(A_{q+1}) + b_{[\delta n]} \quad (\text{since } T_{q+1}  \text{ has property B})\\
&\leq c_0 \lambda(I_1(q)) \, \mu_{q+1}(A_{q+1}) + b_{[\delta n]} \quad (\text{by  Lemma } \ref{sec4:lem2}) \\ 
&\leq c \cdot c_0^{q+1} \prod_{i=0}^{q+1} \mu_i(A_i) + q b_{[\delta n]} \quad (\text{by assumption } (iii) \text{ in Lemma \ref{sec4:lem1} })
\end{align*}
This proves Lemma \ref{sec4:lem1} and concludes the proof of Theorem \ref{thm:L^2_joint}.
\end{proof}

The following example is an immediate consequence of Theorem \ref{thm:L^2_joint}.
\begin{ex}[Theorem \ref{Intro-Thm-JE-Entropy2}]
\label{ex:thmintro12}

For any $f_{0}, f_1, f_2, \dots, f_{k+1} \in L^{\infty} (\lambda)$, 
\begin{equation}
\label{eq:ex5.5}
\begin{split} 
\begin{split} 
\lim\limits_{N -M \rightarrow \infty} \frac{1}{N -M } \sum\limits_{n=M}^{N-1} 
&  T_0^n f_{0} \cdot T_{\beta_1}^n f_{1} \cdots T_{\beta_k}^n f_{k}  \cdot T_G^n f_{k+1} \\
 &= \int f_{0} \, d \lambda  \cdot \prod_{i=1}^k \int f_{i} \, d \mu_{\beta_i} \cdot  \int f_{k+1} \, d \mu_G \quad \text{in } L^{2}(\nu),
\end{split}
\end{split}
\end{equation}
where $T_0$ is an ergodic interval exchange transformation and $\beta_1 , \cdots , \beta_t$ are distinct real numbers with $\beta_i >1$ and $\log \beta_i \ne \frac{\pi^2}{6 \log 2}$. 
\end{ex}

The following example demonstrates that the requirement in Theorem  \ref{thm:L^2_joint} that the transformations involved have different entropy (condition (3)) is essential. 
\begin{ex}
\label{ex3.1n}
Define 
\begin{equation*}
 S(x) = \begin{cases} 
      2x  + \frac{1}{2}, \quad & 0 \leq x < \frac{1}{4}, \\
      2x ,                     \quad & \frac{1}{4} \leq x < \frac{1}{2}, \\
      2x -1 ,                     \quad & \frac{1}{2} \leq x < \frac{3}{4}, \\
      2x - \frac{3}{2},  \quad & \frac{3}{4} \leq x < 1.
   \end{cases}
\end{equation*}
Take a partition 
\[ \mathcal{A}_S = \{ [0, 1/4), [1/4, 1/2), [1/2, 3/4), [3/4,1) \}.\]
so that $(S, \lambda, \mathcal{A}_S) \in \mathcal{T}$. Rokhlin formula (Theorem \ref{Rokhlin}) says that $h(S) = \log 2$. 

Let $T_b x = b x \, \bmod \,1 $ with $b \geq 2$ an integer. 
By Theorem \ref{thm:L^2_joint}, if $b \ne 2$, $T_b$ and $S$ are uniformly jointly ergodic. 

However, $T_2$ and $S$ are not jointly ergodic. 
Indeed, for $f(x) = e^{4 \pi i x }$ and $g(x) = e^{- 4 \pi i x}$, we have that 
$f (T_2^n x) \cdot g(S^n x) = 1 \quad \text{for all } n$, so 
$ \int f (T_2^n x) \cdot g(S^n x) \, d \lambda = 1$,
whereas $ \int f \, d \lambda \, \int g \, d  \lambda = 0$.
\end{ex}

Example \ref{ex3.1n} notwithstanding, it well can be that answer to the following question is positive. (See also Question \ref{Que:SkewTent} below.)
\begin{Question}
\label{Que:sameent}
If $\log \beta = \frac{\pi^2}{6 \log 2}$, then $h(T_{\beta}) = h(T_G)$. Are $T_{\beta}$ and $T_G$ uniformly jointly ergodic? jointly mixing?
\end{Question} 

\section{Joint mixing}
\label{sec:jointmixingPM}

In this section, we establish useful criteria for joint mixing and joint weak mixing for transformations in $\mathcal{T}$ and obtain some interesting applications to skew-tent maps and restrictions of Blaschke products to the unit circle. 

The following theorem has similar formulation to Theorem \ref{thm:L^2_joint}, the main distinction being that the property B which appears in the formulation of Theorem \ref{thm:L^2_joint} is now replaced by $\alpha$-mixing (a stronger condition which leads to stronger conclusions). 

\begin{Theorem}[Theorem \ref{Int:Thm:JM2:new}]
\label{Thm:JM2:new}
For $i = 0, 1, 2, \dots, k$, let $\mu_i$ be a probability measure on the measurable space $([0,1], \mathcal{B})$ and let  $T_i:[0,1] \rightarrow [0,1]$ be $\mu_i$-preserving ergodic transformations. 
Suppose that
\begin{enumerate}
\item for $i = 0, 1, 2, \dots, k$, there is a partition $\mathcal{A}_i$ with  $(T_i, \mathcal{A}_i, \mu_i) \in \mathcal{T}$,
\item for $i = 1, 2, \dots, k$, $(T_i, \mathcal{A}_i, \mu_i)$ is $\alpha$-mixing,
\item $0 \leq h(T_0) < h(T_1) < \cdots < h(T_k) < \infty$, where $h(T_i)$ denotes the entropy of $T_i$.
\end{enumerate} 
Let $\nu$ be any probability measure on $([0,1], \mathcal{B})$, which is equivalent to $\lambda$.
Then 
\begin{enumerate}[(i)]
\item if $T_0$ is mixing, then for any $B, A_0, \dots, A_k \in \mathcal{B}$,
\begin{equation}
\label{Jm:eqn:24}
 \lim_{n \rightarrow \infty} \nu ( B \cap T_0^{-n} A_0 \cap T_1^{-n}A_1 \cap \cdots \cap T_k^{-n}A_k) =   \nu (A_0) \prod_{i=0}^k \mu_i(A_i) \tag*{(\theequation)$_{k}$}\refstepcounter{equation}
\end{equation}
\item if $T_0$ is weakly mixing, then for any $B, A_0, \dots, A_k \in \mathcal{B}$,
\begin{equation*}
 \lim_{N-M \rightarrow \infty} \frac{1}{N-M} \sum_{n=M}^{N-1} | \nu (B \cap T_0^{-n} A_0 \cap T_1^{-n}A_1 \cap \cdots \cap T_k^{-n}A_k) - \nu (B) \prod_{i=0}^k \mu_i(A_i)| = 0. 
\end{equation*}
\end{enumerate}
\end{Theorem}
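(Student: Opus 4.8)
The plan is to mimic the proof of Theorem~\ref{thm:L^2_joint}, upgrading the one-sided property~B estimates to two-sided $\alpha$-mixing estimates so as to obtain exact products rather than mere upper bounds, with the extra set $B$ and the transformation $T_0$ handled by the hypothesis that $T_0$ is mixing (resp.\ weakly mixing). First I would invoke Lemma~\ref{Lem:Conv:Weak} to reduce to the case $\nu = \lambda$, and approximate each of $B, A_0, \ldots, A_k$ in $\lambda$-measure by finite unions of cylinders, so that we may assume $A_i \in \sigma((\mathcal{A}_i)^l)$ and $B \in \sigma((\mathcal{A}_0)^L)$; the general case then follows by the same approximation argument used for \eqref{pf-je-main}. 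Throughout I write $m = [\delta n]$ and keep the parameters $\epsilon, \delta, \gamma$ exactly as in the proof of Theorem~\ref{thm:L^2_joint}, where the entropy gaps $h_0 < h_1 < \cdots < h_k$ guarantee \eqref{eq:gamma}.

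The technical heart is a two-sided, $\lambda$-based form of $\alpha$-mixing: for each $i \in \{1, \ldots, k\}$, any $A \in \mathcal{B}$, and any $\tilde C \in \sigma((\mathcal{A}_i)^{n-m})$,
\[ \bigl| \lambda(\tilde C \cap T_i^{-n} A) - \lambda(\tilde C)\, \mu_i(A) \bigr| \leq \eta_i(n), \]
where $\eta_i(n) \to 0$ as $n \to \infty$, uniformly in $\tilde C$ and $A$. To prove this I would write $\rho_i = \frac{d\lambda}{d\mu_i}$ (so $\frac1c \leq \rho_i \leq c$) and pass to $\mu_i$, obtaining $\lambda(\tilde C \cap T_i^{-n}A) = \int 1_{\tilde C}\, \rho_i\, (1_A \circ T_i^n)\, d\mu_i$. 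Setting $\psi_n = 1_A \circ T_i^n - \mu_i(A)$, the $\alpha$-mixing estimate \eqref{eq3.2} (applied with $l = n-m$, so that the offset is $m = [\delta n]$) says precisely that $\bigl|\int_{A'} \psi_n \, d\mu_i\bigr| \leq \alpha_i([\delta n])$ for every $A' \in \sigma((\mathcal{A}_i)^{n-m})$, whence $\|E_{\mu_i}[\psi_n \mid \sigma((\mathcal{A}_i)^{n-m})]\|_{L^1(\mu_i)} \leq 2\alpha_i([\delta n])$. Replacing $\rho_i$ by its conditional expectation $\rho_i^{(n-m)} = E_{\mu_i}[\rho_i \mid \sigma((\mathcal{A}_i)^{n-m})]$ costs at most $\|\rho_i - \rho_i^{(n-m)}\|_{L^1(\mu_i)}$, which tends to $0$ by martingale convergence since $\mathcal{A}_i$ generates $\mathcal{B}$; and for the $\sigma((\mathcal{A}_i)^{n-m})$-measurable bounded function $g = 1_{\tilde C}\rho_i^{(n-m)}$ one has $\int g\, \psi_n\, d\mu_i = \int g\, E_{\mu_i}[\psi_n \mid \sigma((\mathcal{A}_i)^{n-m})]\, d\mu_i$, which is bounded by $2c\,\alpha_i([\delta n])$. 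Since $\int 1_{\tilde C}\rho_i^{(n-m)}\, d\mu_i = \lambda(\tilde C)$, this gives the displayed estimate with $\eta_i(n) = \|\rho_i - \rho_i^{(n-m)}\|_{L^1(\mu_i)} + 2c\,\alpha_i([\delta n])$.

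With this lemma in hand I would run the induction of Lemma~\ref{sec4:lem1} and Lemma~\ref{sec4:lem2} essentially verbatim, but now starting the intersection from $B \cap T_0^{-n}A_0$ and tracking exact values of $\lambda(I_1(q))$ instead of upper bounds. For the base case, $B \cap T_0^{-n}A_0 \in \sigma((\mathcal{A}_0)^{n+l})$ splits into good atoms plus a set of measure $O(\epsilon)$, and mixing of $T_0$ (transferred from $\mu_0$ to $\lambda$ through the density $\frac{d\lambda}{d\mu_0}$) gives $\lambda(B \cap T_0^{-n}A_0) \to \lambda(B)\mu_0(A_0)$, so $\lambda(I_1(0)) = \lambda(B)\mu_0(A_0) + O(\epsilon) + o(1)$. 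In the inductive step, Lemma~\ref{sec4:lem2} rewrites $I_1(q)$, up to a set of measure $O(\epsilon + e^{-\gamma n})$, as a union $\tilde C$ of good atoms of $(\mathcal{A}_{q+1})^{n-m}$ (this is exactly where $h_{q+1} > h_q$ is used); applying the key lemma to $\tilde C$ and $A_{q+1}$ then replaces the property~B step and produces the exact factor, giving $\lambda(I_1(q+1)) = \lambda(I_1(q))\, \mu_{q+1}(A_{q+1}) + O(\epsilon + e^{-\gamma n}) + \eta_{q+1}(n)$. Iterating from $q = 0$ to $k$ and then letting $n \to \infty$ and $\epsilon \to 0$ yields $\lim_{n\to\infty}\lambda(B \cap T_0^{-n}A_0 \cap \cdots \cap T_k^{-n}A_k) = \lambda(B)\prod_{i=0}^k \mu_i(A_i)$, which is the assertion of part~(i) for $\nu = \lambda$; removing the cylinder approximations of $B$ and the $A_i$ finishes part~(i).

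For part~(ii) I would keep the same decomposition but wrap everything inside the Cesàro average $\frac{1}{N-M}\sum_{n=M}^{N-1} |\cdots|$. All the $\alpha$-mixing and geometric error terms above are $o(1)$ as $n \to \infty$, so their Cesàro averages vanish; the only place the base case enters is through $|\lambda(B\cap T_0^{-n}A_0) - \lambda(B)\mu_0(A_0)|$, whose Cesàro average tends to $0$ precisely because $T_0$ is weakly mixing (again transferred to $\lambda$). A single application of the triangle inequality, using $\prod_{i=1}^k \mu_i(A_i) \leq 1$, then gives the weak-mixing conclusion. The main obstacle is the key lemma of the second paragraph: unlike the one-sided property~B bound, which tolerated the multiplicative constant $c$ coming from $\frac1c\mu_i \leq \lambda \leq c\mu_i$, here the conversion between $\mu_i$ and $\lambda$ must be carried out with a \emph{vanishing} additive error, and this is exactly what forces the conditional-expectation/martingale device; the secondary difficulty is purely bookkeeping, namely ensuring that the $o(1)$ errors do not accumulate into something nonvanishing across the $k$-fold induction.
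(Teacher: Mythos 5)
Your proposal is correct, and it follows the paper's overall strategy — cylinder approximation, the entropy-gap decomposition of the intersection into good atoms of $\mathfrak{g}_{q+1}^{n-[\delta n]}$ plus an $O(\epsilon+e^{-\gamma n})$ remainder, and an induction over the transformations ordered by entropy, with $T_0$'s (weak) mixing feeding the base case. The one place where you genuinely diverge is the treatment of the measure mismatch when applying $\alpha$-mixing to $T_{q+1}$. The paper sidesteps this entirely: since the inductive hypothesis $(6.1)_q$ holds for \emph{every} measure equivalent to $\lambda$ (via Lemma \ref{Lem:Conv:Weak}), the authors simply carry out step $q+1$ with the ambient measure $\mu_{q+1}$ itself, so that the $\alpha$-mixing inequality \eqref{eq3.2} applies verbatim to $H(q)\in\sigma((\mathcal{A}_{q+1})^{n-[\delta n]})$ with gap $[\delta n]$, and then transfer back at the end. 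You instead keep $\lambda$ fixed throughout and prove a $\lambda$-based, two-sided mixing estimate by conditioning the density $d\lambda/d\mu_{q+1}$ on $\sigma((\mathcal{A}_{q+1})^{n-m})$ and invoking martingale convergence together with the $L^1$-bound $\|E_{\mu_i}[\psi_n\mid\sigma((\mathcal{A}_i)^{n-m})]\|_{L^1}\le 2\alpha_i([\delta n])$; this lemma is correct (the uniformity in $\tilde C$ and $A$ holds because $\|\rho_i^{(n-m)}\|_\infty\le c$ and the martingale term is independent of the sets), and it is the honest price of refusing to change the reference measure. The paper's measure-switching trick is shorter and reuses Lemma \ref{Lem:Conv:Weak} as the engine of the induction; your conditional-expectation device is more self-contained at each step and would also be the natural route if one wanted a quantitative (rate-explicit) version of the convergence, since it makes the error $\eta_{q+1}(n)$ visible. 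Your handling of part (ii) matches what the paper leaves to the reader.
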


Before embarking on the proof, we formulate and prove a useful auxiliary lemma. 
\begin{Lemma}
\label{Lem:Conv:Weak}
Suppose that  $\nu$ is a probability measures on  $(X, \mathcal{B})$, which is equivalent to $\mu$.
If $T_1, \dots, T_k$ are jointly mixing with respect to $(\mu; \mu_1, \dots, \mu_k)$, then $T_1, \dots, T_k$ are jointly mixing with respect to $(\nu; \mu_1, \dots, \mu_k)$.
\end{Lemma}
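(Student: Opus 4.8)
The plan is to reduce the statement for $\nu$ to the assumed joint mixing for $\mu$ via the Radon--Nikodym density and a routine $L^1$-approximation argument. Since $\nu$ and $\mu$ are equivalent, in particular $\nu \ll \mu$, so there is a density $g = \frac{d\nu}{d\mu} \in L^1(\mu)$. Given sets $A_0, A_1, \dots, A_k \in \mathcal{B}$, I would rewrite the quantity to be analyzed as
\[
\nu\bigl(A_0 \cap T_1^{-n} A_1 \cap \cdots \cap T_k^{-n} A_k\bigr) = \int 1_{A_0}(x) \prod_{i=1}^k 1_{A_i}(T_i^n x)\, g(x)\, d\mu(x),
\]
and observe that the target value $\nu(A_0) \prod_{i=1}^k \mu_i(A_i)$ equals $\bigl(\int 1_{A_0}\, g\, d\mu\bigr) \prod_{i=1}^k \mu_i(A_i)$. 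Thus, writing $F_0 := g\, 1_{A_0} \in L^1(\mu)$, it suffices to show that $\int F_0 \prod_{i=1}^k 1_{A_i}(T_i^n x)\, d\mu$ converges to $\bigl(\int F_0\, d\mu\bigr)\prod_{i=1}^k \mu_i(A_i)$ as $n \to \infty$.

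Next I would upgrade the hypothesis (joint mixing of $T_1, \dots, T_k$ with respect to $(\mu; \mu_1, \dots, \mu_k)$, Definition \ref{Int:Def:JM}) from indicator functions in the $A_0$-slot to arbitrary simple functions there: by linearity, for any simple function $\phi = \sum_j c_j 1_{B_j}$ one has $\int \phi \prod_{i=1}^k 1_{A_i}(T_i^n x)\, d\mu \to \bigl(\int \phi\, d\mu\bigr)\prod_{i=1}^k \mu_i(A_i)$, since each term $\mu(B_j \cap T_1^{-n}A_1 \cap \cdots \cap T_k^{-n}A_k)$ converges to $\mu(B_j)\prod_i \mu_i(A_i)$ by the assumed joint mixing applied with $A_0 = B_j$. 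I would then approximate $F_0$ in $L^1(\mu)$ by a simple function $\phi_\varepsilon$ with $\int |F_0 - \phi_\varepsilon|\, d\mu < \varepsilon$ and run a standard $3\varepsilon$-argument. The point that makes this work is that $\bigl| \prod_{i=1}^k 1_{A_i}(T_i^n x)\bigr| \le 1$ uniformly in $n$, so that
\[
\Bigl| \int (F_0 - \phi_\varepsilon)\prod_{i=1}^k 1_{A_i}(T_i^n x)\, d\mu \Bigr| \le \int |F_0 - \phi_\varepsilon|\, d\mu < \varepsilon
\]
for every $n$; combined with $\bigl|\int(F_0 - \phi_\varepsilon)\,d\mu\bigr|\,\prod_i \mu_i(A_i) < \varepsilon$ and the already-established convergence for $\phi_\varepsilon$, this gives $\limsup_n |\cdots| \le 2\varepsilon$, and letting $\varepsilon \to 0$ concludes the argument.

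There is no serious obstacle here; the argument is entirely soft. The only point requiring minor care is that the $L^1$-approximation error be controlled uniformly in $n$, which is exactly what the uniform bound on the product of indicators provides — this is why the approximation is carried out in the $A_0$-slot, against the genuine $L^1$ density $g$, rather than in the slots $A_1, \dots, A_k$ (whose invariant measures $\mu_i$ are unchanged and need no adjustment). Note that this lemma is the joint-mixing analogue of Lemma \ref{Lem:Conv:Mean}, and essentially the same bounded-convergence reasoning underlies both.
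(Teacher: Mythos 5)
Your proof is correct and follows essentially the same route as the paper: both arguments pull the Radon--Nikodym density $\frac{d\nu}{d\mu}$ into the $A_0$-slot and control the resulting $L^1(\mu)$ approximation error uniformly in $n$ using the bound $\bigl|\prod_{i=1}^k 1_{A_i}(T_i^n x)\bigr| \le 1$. The only cosmetic difference is that the paper truncates the density on the set $\{ \frac{d\nu}{d\mu} > \frac{1}{\delta}\}$ and lets $\delta \to 0$, whereas you approximate $g\,1_{A_0}$ by simple functions; your version makes explicit the linearity step that the paper leaves implicit.
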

\begin{proof}
We need to show that for any measurable sets $B_0, \dots, B_k$, 
\begin{equation}
\label{eqn:weakconv} 
\lim_{n \rightarrow \infty} \nu (B_0 \cap T_1^{-n} B_1 \cap \cdots \cap T_k^{-n} B_k) = \nu (B_0) \prod_{i=1}^k \mu_i (B_i).
\end{equation}
Let $f_i = 1_{B_i}$ for $i = 0, \dots, k$. 
Define $A_{\delta} = \{ x: \frac{d \nu}{d \mu}(x) > \frac{1}{\delta} \}$. 
Then we have that
\begin{align*}
\nu &(B_0 \cap T_1^{-n} B_1 \cap \cdots \cap T_k^{-n} B_k)) \\
& = \int f_0(x) \cdot T_1^n f_1(x) \cdots T_k^n f_k(x) \, \frac{d \nu}{d \mu}(x) \, d \mu \\
&= \int 1_{A_{\delta}}(x)  \frac{d \nu}{d \mu}(x) f_0(x) \prod_{i=1}^k f_i(x) \, d \mu + \int 1_{A_{\delta}^c}(x)  \frac{d \nu}{d \mu}(x) f_0(x) \prod_{i=1}^k f_i(x) \, d \mu. 
\end{align*}
Note that 
\[ 0 \leq  \int 1_{A_{\delta}}(x)  \frac{d \nu}{d \mu}(x) f_0(x) \prod_{i=1}^k f_i(x) \, d \mu   \leq \nu(A_{\delta}) \]
and 
\begin{align*} 
\lim_{n \rightarrow \infty} \int 1_{A_{\delta}^c}(x)  \frac{d \nu}{d \mu}(x) f_0(x) \prod_{i=1}^k f_i(x) \, d \mu 
&= \int 1_{A_{\delta}^c}(x)  \frac{d \nu}{d \mu}(x) f_0(x) d \mu \prod_{i=1}^k \int f_i \, d \mu_i \\
&= \int_{A_{\delta}^c} f_0(x) d \nu \prod_{i=1}^k \int f_i \, d \mu_i. 
\end{align*}
Since $\nu(A_{\delta}) \rightarrow 0$ and  $\int_{A_{\delta}^c} f_0(x) d \nu \rightarrow \int f_0(x) d \nu = \nu(B_0)$, as $\delta \rightarrow 0$,
\eqref{eqn:weakconv} follows.
\end{proof}

\begin{proof}[Proof of Theorem \ref{Thm:JM2:new}]
We will prove $(i)$. The proof of item $(ii)$ goes along similar lines and is omitted.  

Since $T_0$ is mixing, we have
\[ \lim_{n \rightarrow \infty} \mu_0 (B \cap T_0^{-n}A_0) = \mu_0 (B) \mu_0(A_0).\]
It follows then from Lemma \ref{Lem:Conv:Weak} that 
\[ \lim_{n \rightarrow \infty} \nu (B \cap T_0^{-n}A_0) = \nu (B) \mu_0(A_0),\]
which gives formula \ref{Jm:eqn:24} for $k=0$.

We will prove that for $0 \leq q \leq k-1$, the validity \ref{Jm:eqn:24} for $k=q$ implies the validity of \ref{Jm:eqn:24} for $k=q+1$. 
In view of Lemma \ref{Lem:Conv:Weak}, we will prove this for $\nu = \mu_{q+1}$:
 for any measurable sets $B, A_0, \dots, A_{q+1} \in \mathcal{B}$,
\begin{equation}
\label{sec5:thm:pf:eqn1}
 \lim_{n \rightarrow \infty} \mu_{q+1} ( B \cap T_0^{-n} A_0 \cap T_1^{-n}A_1 \cap \cdots \cap T_{q+1}^{-n}A_{q+1}) =   \mu_{q+1} (B) \prod_{i=0}^{q+1} \mu_i(A_i).
\end{equation}
Observe that it is enough to show that for any $l \in \mathbb{N}$, \eqref{sec5:thm:pf:eqn1} holds for any measurable sets $B, A_0, \dots, A_{q+1}$ with $B \in (\mathcal{A}_0)^l$ and  $A_i \in (\mathcal{A}_i)^l$, $i = 0, 1, 2, \dots, q+1$.

As in the proof of Theorem \ref{thm:L^2_joint}, write $h_i = h(T_i)$ and choose  $\epsilon > 0$ so that $h_i - h_{i-1} - 2 \epsilon > 0$ for $i= 1, 2, \dots, q+1$. Also, choose $\delta, \gamma >0$ so that for all $i = 1, 2, \dots, q+1$,
if $n$ is sufficiently large, one has 
\begin{equation}
\label{sec5.1:eq:gamma}
(h_{i} - \epsilon) (1- \delta)n  - (h_{i-1} + \epsilon)(n+l)  > \gamma n.
\end{equation}
As before, we will be using the following notation: for $i = 0, 1 , \dots, q+1$,
\begin{itemize}
\item $\mathfrak{g}_i^n = \{A \in (\mathcal{A}_i)^n:   e^{-n (h_i + \epsilon)} < \mu_i(A) <  e^{- n (h_i - \epsilon)}\}$,
\item $\mathfrak{b}_i^n = (\mathcal{A}_i)^n  \setminus \mathfrak{g}_i^n$. 
\end{itemize}
Corollary \ref{entropy:intervals} says that there is $N_0 = N_0 (\epsilon)$ such that if $n \geq N_0$, then $\mu_i (\bigcup_{B \in  \mathfrak{b}_i^n} B) \leq \epsilon$. For the rest of proof, we will be assuming that $n$ is sufficiently large so that \eqref{sec5.1:eq:gamma} holds, $n - [\delta n] \geq N_0$ and $\gamma n \geq 2 \log c$.

We will need the following lemma, which plays the same role as Lemmas \ref{sec4:lem1} and \ref{sec4:lem2} play in the proof of Theorem \ref{thm:L^2_joint}. The proof is similar and is omitted.
\begin{Lemma}
\label{lem:sec5:new}
Let $l \in \mathbb{N}$ be fixed. Let $B, A_0, \dots, A_{q}$ with $B \in (\mathcal{A}_0)^l$ and  $A_i \in (\mathcal{A}_i)^l$, $i = 0, 1, 2, \dots, q$.
If $n$ is sufficiently large, then there are disjoint sets $H(q)$ and $J(q)$  such that 
\begin{equation}
\label{sec5.1:eqn5.5}
    B \cap T_0^{-n} A_0 \cap T_1^{-n} A_1 \cap \cdots \cap T_{q}^{-n} A_{q} = H(q) \cup J(q),
\end{equation}
where 
\begin{itemize}
\item $H(q)$ is union of some intervals in $\mathfrak{g}_{q+1}^{n-[\delta n]}$, 
\item $\lambda (J(q)) = O (\epsilon + e^{-\gamma n})$.
\end{itemize}
\end{Lemma}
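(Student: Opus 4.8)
The plan is to mirror, essentially verbatim, the inner induction from the proof of Theorem \ref{thm:L^2_joint}, i.e.\ the combination of Lemmas \ref{sec4:lem1} and \ref{sec4:lem2}, with the sole modification that the entire argument is carried out inside the set $B$. Since $B \in (\mathcal{A}_0)^l \subseteq \sigma((\mathcal{A}_0)^{n+l})$ and $A_0 \in (\mathcal{A}_0)^l$ forces $T_0^{-n} A_0 \in \sigma((\mathcal{A}_0)^{n+l})$, the set $B \cap T_0^{-n} A_0$ again lies in $\sigma((\mathcal{A}_0)^{n+l})$. Hence by Corollary \ref{entropy:intervals} I would first write $B \cap T_0^{-n} A_0 = I_1(0) \cup I_2(0)$, where $I_1(0)$ is a union of atoms from $\mathfrak{g}_0^{n+l}$ and $\lambda(I_2(0)) \leq c \epsilon$. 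Thus adjoining the factor $B$ costs nothing: the base of the induction is identical to the one used for Theorem \ref{thm:L^2_joint}.

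Next I would run the same two-step induction to maintain, for $0 \leq j \leq q$, a decomposition $B \cap T_0^{-n} A_0 \cap \cdots \cap T_j^{-n} A_j = I_1(j) \cup I_2(j)$ in which $I_1(j)$ is a union of good atoms in $\mathfrak{g}_j^{n+l}$ and $\lambda(I_2(j)) = O(\epsilon + e^{-\gamma n})$. The passage from level $j$ to level $j+1$ is exactly Lemmas \ref{sec4:lem2} and \ref{sec4:lem1}: first retile each good atom $J_i(j) \in \mathfrak{g}_j^{n+l}$ by atoms of the finer partition $(\mathcal{A}_{j+1})^{n-m}$, where $m = [\delta n]$, absorbing boundary and bad atoms into the error, so as to produce a union of atoms in $\mathfrak{g}_{j+1}^{n-m}$; then intersect this set, which lies in $\sigma((\mathcal{A}_{j+1})^{n-m}) \subseteq \sigma((\mathcal{A}_{j+1})^{n+l})$, with $T_{j+1}^{-n} A_{j+1} \in \sigma((\mathcal{A}_{j+1})^{n+l})$ and re-decompose the result by Corollary \ref{entropy:intervals} into a union $I_1(j+1)$ of good atoms in $\mathfrak{g}_{j+1}^{n+l}$ plus a bad remainder of measure $\leq c\epsilon$. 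Unlike in Theorem \ref{thm:L^2_joint}, I do not need to track any measure bound on $I_1(j)$ here (property B and $\alpha$-mixing enter only later, when $H(q)$ is intersected with $T_{q+1}^{-n} A_{q+1}$ in the proof of \eqref{sec5:thm:pf:eqn1}); only the structural decomposition and the $O(\epsilon + e^{-\gamma n})$ error control are required. After $q$ iterations I reach $I_1(q) \subseteq \mathfrak{g}_q^{n+l}$, and applying the retiling step once more to $I_1(q)$ produces $H(q)$, a union of atoms in $\mathfrak{g}_{q+1}^{n-[\delta n]}$. Setting $J(q) := (B \cap T_0^{-n} A_0 \cap \cdots \cap T_q^{-n} A_q) \setminus H(q)$ and collecting all the accumulated remainders gives $\lambda(J(q)) = O(\epsilon + e^{-\gamma n})$, as required.

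The one genuinely substantive step, and the place where hypothesis (3) is used, is the retiling argument. There each good atom has $\lambda(J_i(j)) \geq \frac{1}{c} e^{-(n+l)(h_j + \epsilon)}$, so their number satisfies $s(j) \leq c\, e^{(n+l)(h_j + \epsilon)}$, while every atom of $\mathfrak{g}_{j+1}^{n-m}$ has length $\leq c\, e^{-(n-m)(h_{j+1} - \epsilon)}$. For each $J_i(j)$ only the at most two fine atoms meeting its endpoints, together with the bad fine atoms, fail to tile its interior, so the total error is bounded by $c\epsilon + 2 s(j)\, c\, e^{-(n-m)(h_{j+1} - \epsilon)} \leq c\epsilon + 2 c^2 e^{(n+l)(h_j + \epsilon) - (n-m)(h_{j+1} - \epsilon)}$. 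The inequality \eqref{sec5.1:eq:gamma}, available precisely because of the strict gap $h_j < h_{j+1}$, forces the exponent to be $\leq -\gamma n$, so the error is $O(\epsilon + e^{-\gamma n})$. This is the only point at which the argument could fail, and it is exactly the obstacle already resolved in Lemma \ref{sec4:lem2}; the rest of the proof is the routine bookkeeping of carrying the factor $B$ through the induction and summing the $O(\epsilon + e^{-\gamma n})$ errors over the $q+1$ stages.
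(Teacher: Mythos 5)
Your proposal is correct and follows exactly the route the paper intends: the paper omits the proof with the remark that it is similar to Lemmas \ref{sec4:lem1} and \ref{sec4:lem2}, and your reconstruction carries the extra factor $B\in(\mathcal{A}_0)^l\subseteq\sigma((\mathcal{A}_0)^{n+l})$ through that same induction, correctly observing that the measure bound of condition (iii) is not needed here and that the entropy-gap estimate \eqref{sec5.1:eq:gamma} is the only substantive ingredient.
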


Since $(T_{q+1}, \mu_{q+1}, \mathcal{A}_{q+1})$ is $\alpha$-mixing,
there exists a sequence $\alpha(n)$ with $\alpha(n) \rightarrow 0$ such that  for any $p \geq 0$, if $C \in  \sigma ((\mathcal{A}_{q+1})^p)$ and $D  \in \mathcal{B}$,
\[ | \mu_{q+1} (C \cap T_{q+1}^{-(n+p)} D) - \mu_{q+1} (C) \mu_{q+1} (D) | \leq \alpha(n) \,\, \text{ for any } n \geq 1.\]
Then we have that
\begin{align*} 
&\mu_{q+1} ( B \cap T_0^{-n} A_0 \cap T_1^{-n}A_1 \cap \cdots \cap T_{q+1}^{-n}A_{q+1}) \\ 
&= \mu_{q+1} (H(q) \cap T_{q+1}^{-n} A_{q+1}) + O(\epsilon + e^{-\gamma n}) \quad (\text{by } \eqref{sec5.1:eqn5.5} )\\ 
&= \mu_{q+1}(H(q))  \, \mu_{q+1}(A_{q+1}) + \alpha ([\delta n]) + O(\epsilon + e^{- \gamma n}) \quad (\text{since } T_{q+1} \text{ is  $\alpha$-mixing})\\
&= \mu_{q+1} (B \cap T_0^{-n} A_0 \cap T_1^{-n} A_1 \cap \cdots \cap T_{q}^{-n} A_{q})   \, \mu_{q+1}(A_{q+1})  + \alpha ([\delta n])  +   O(\epsilon + e^{-\gamma n}) \quad (\text{by } \eqref{sec5.1:eqn5.5}) 
\end{align*}

By the assumption, we have that for any measure $\nu$, which is equivalent to $\lambda$,
$$\lim_{n \rightarrow \infty} \nu (B \cap T_0^{-n} A_0 \cap T_1^{-n} A_1 \cap \cdots \cap T_{q}^{-n} A_{q})  = \nu(B) \prod_{i=0}^q \mu_i (A_i),$$ 
so 
$$\lim_{n \rightarrow \infty} \mu_{q+1} (B \cap T_0^{-n} A_0 \cap T_1^{-n} A_1 \cap \cdots \cap T_{q}^{-n} A_{q})  = \mu_{q+1}(B) \prod_{i=0}^q \mu_i (A_i).$$ 
Therefore, 
\[
 \limsup_{n \rightarrow \infty} | \mu_{q+1} (B \cap T_0^{-n} A_0 \cap T_1^{-n} A_1 \cap \cdots \cap T_{q+1}^{-n} A_{q+1}) - \mu_{q+1} (B) \prod_{j=0}^{q+1} \mu_j (A_j) | =   O ( \epsilon),
\]  
Since $\epsilon$ is arbitrarily small, 
\[
 \lim_{n \rightarrow \infty}   \mu_{q+1}  (B \cap T_0^{-n} A_0 \cap T_1^{-n} A_1 \cap \cdots \cap T_{q+1}^{-n} A_{q+1}) =  \mu_{q+1} (B) \prod_{j=0}^{q+1} \mu_j (A_j). 
\]
The validity of  \eqref{sec5:thm:pf:eqn1} for general $B, A_0, \dots, A_{q+1} \in \mathcal{B}$  follows now by the standard approximation argument.
\end{proof}

The following example is an immediate consequence of Theorem \ref{Thm:JM2:new}:
\begin{ex}
Let $T_1, \dots, T_k$ be transformations belonging to $\mathcal{T}$ with positive distinct entropies. For $i = 1, 2, \dots, k$, let $\mu_i$ be a $T_i$-preserving ergodic measure, which is equivalent to Lebesgue measure $\lambda$. 
\begin{enumerate}
\item If $S$ is a mixing rank one transformation belonging to $\mathcal{T}$, then 
for any measurable sets $ B, A_0, A_1, \dots, A_k$, 
\begin{equation*}
\lim\limits_{n \rightarrow \infty} \lambda (B \cap S^{-n} A_0 \cap T_{1}^{-n} A_{1} \cap \cdots \cap T_{k}^{-n} A_{k}) =  \lambda (B) \lambda(A_0) \prod_{i=1}^k \mu_i (A_{i}).
\end{equation*}
\item If $S$ is a weak mixing interval exchange transformation or a weak mixing rank one transformation belonging to $\mathcal{T}$ (for instance, Chacon's transformation), then for any measurable sets $B, A_0, A_1, \dots, A_k$, 
\begin{equation*}
\lim\limits_{N-M \rightarrow \infty} \frac{1}{N-M} \sum_{n=M}^{N-1} |\lambda (B \cap S^{-n} A_0 \cap  T_{1}^{-n} A_{1} \cap \cdots \cap T_{k}^{-n} A_{k}) -  \lambda (B) \lambda(A_0) \prod_{i=1}^k \mu_i (A_{i})|=0.
\end{equation*}
\end{enumerate}
\end{ex}

The following example involves the so-called skew tent maps.  
\begin{ex}
\label{ex:skewtent}
For $0 < a <1$, let
\[T_{a} (x) 
= \begin{cases} 
      \frac{x}{a}, &  0 \leq x \leq a,  \\
    \frac{1-x}{1- a}, & a <  x \leq 1.
   \end{cases}
\]

Note that each $T_a$ is two-to-one, preserves the Lebesgue measure, and satisfies the conditions of Theorem  \ref{folklore}. By Rokhlin formula, we have that $h(T_{a}) = - a \log a - (1- a) \log (1-a)$. Thus if $a_1, \cdots, a_k \in (0,1)$ with $a_i \ne a_j$ and $a_i \ne 1- a_j$ for $i \ne j$, then $T_{a_1}, \dots, T_{a_k}$ are jointly mixing. 
We do not know the answer to the following question.
\begin{Question}
\label{Que:SkewTent}
 Are $T_{a}$ and $T_{1-a}$ jointly mixing? (Note that $h(T_a) = h(T_{1-a})$.)
\end{Question}
\end{ex}

\begin{ex}
This example deals with restrictions of finite Blaschke products to the unit circle $S^1$. (See Subsection \ref{blaschke}.)
For $l = 1, 2, \dots, k$, let
\[ f_l (z) = C_l \prod_{j=1}^{M_l} \frac{z - a_j^l}{1- \overline{a_j^l} z},\]
where $|C_l| = 1$, $|a_j^l| < 1$ for $j = 1, 2, \dots, M_l$ and $M_l >1$.
Suppose that for $l = 1, 2, \dots, k$,
\[ \sum_{j=1}^{M_l} \frac{1 - |a_j^l|}{1+ |a_j^l|} > 1.\]

For $l = 1, 2, \dots, k$, let $z_l$ be the unique fixed point of $f_l$ in the unit disk and let $\tau_l$ be the restriction of Blaschke product $f_l$ on the circle $S^1$. 

If entropy values $h(\tau_1), \dots, h(\tau_k)$ are distinct, then $\tau_1, \dots, \tau_k$ are jointly mixing: for any Borel measurable sets $A_0, A_1, \dots, A_k \subset S^1$, 
\begin{equation}
\label{eq:sm:Bla}
    \lim\limits_{n \rightarrow \infty}  \sigma( A_0 \cap \tau_1^{-n} A_1 \cap \cdots \cap \tau_k^{-n} A_k) = \sigma (A_0) \prod_{l=1}^k \int_{S^1} 1_{A_l} (u) \frac{1- |z_l|^2}{|u - z_l|^2} d \sigma (u),
\end{equation}
where $\sigma$ is the normalized Lebesgue measure on $S^1$. 

Note that in the light of Remark \ref{rem:mixje}, formula \eqref{eq:sm:Bla} implies that 
for any $g_1, \dots g_k \in L^{\infty} (S^1)$,
\[ \lim\limits_{N -M \rightarrow \infty} \frac{1}{N -M } \sum\limits_{n=M}^{N-1} \prod_{l=1}^k g_l \circ \tau_l^n = \prod_{l=1}^k \int_{S^1} g_l(u) \frac{1- |z_l|^2}{|u - z_l|^2} d \sigma (u) \quad \text{in } L^2 (\sigma).\]
\end{ex}

\section{Uniform versus Non-uniform joint ergodicity}
\label{unifvsnon}
In this section we consider non-uniform variant of joint ergodicity.
\begin{Definition}
Measure preserving transformations $T_1, \dots, T_k$ on a probability space $(X, \mathcal{B}, \mu)$ are {\em jointly ergodic} if for any $f_1, \dots, f_k \in L^{\infty}$, 
\begin{equation*}
    \lim\limits_{N \rightarrow \infty} \frac{1}{N } \sum\limits_{n=0}^{N-1}  T_1^{n} f_1 \cdot T_2^n f_2 \cdots T_k^n f_k = \prod_{i=1}^k \int f_i \, d \mu \quad \text{ in } L^2(\mu).
\end{equation*}
\end{Definition}

One can show that if $T_1, \dots, T_k$ are commuting and invertible, then the notions of joint ergodicity and uniform joint ergodicity coincide (\cite{BeBe1}). However, in general, these are different notions, which is demonstrated by the following two examples.

\begin{ex}[Example 3.1 in \cite{BeBe2}]
Let $G = C_2^{\mathbb{Z}}$, where $C_2$ is a cyclic group of order 2. Let $\mu$ be the probability Haar measure on $G$.
Let $T$ be the left shift on $G$:
\[ (Tx)_k = x_{k+1}, \,\, x = (x_k)_{k \in \mathbb{Z}} \in G, k \in \mathbb{Z}.\]

Let $A \subset \mathbb{Z}$ be a symmetric set with 
\[ d(A) = \lim_{N \rightarrow \infty} \frac{1}{2N+1} |A \cap \{-N, -N+1, \dots, N\}| = 0,\]
and 
\[d^*(A) = \limsup_{N -M \rightarrow \infty} \frac{1}{N - M + 1}  |A \cap \{M, M+1, \dots, N\}| = 1.\]
(For instance, take  $A = \bigcup_{n=1}^{\infty} \left( [n^3, n^3 + n] \cup [-n^3-n, -n^3] \right).$)

Define $S: G \rightarrow G$ by $(Sx)_k = x_{\pi(\pi(k) + 1 )}$ for $x = (x_k)_{k \in \mathbb{Z}} \in G $, where
\[ \pi (k) = \begin{cases} 
       k, & k \in  A, \\
     -k, &  k \notin A.
   \end{cases}
\]
Then $T, S$ are jointly ergodic, but not uniformly jointly ergodic. See \cite{BeBe2} for the details.
\end{ex}

Before presenting next example, recall that a sequence $(x_n)$ in $\mathbb{R}$ is said to be 
\begin{enumerate}
\item {\em uniformly distributed $\bmod \, 1$} if for any $a, b$ with $0 \leq a < b <1$,
$$ \frac{1}{N} \big| \{ 1 \leq n \leq N: (x_n  \bmod 1) \in [a,b) \} \big| \rightarrow b-a,$$ 
\item {\em well-distributed $\bmod \, 1$} if for any $a, b$ with $0 \leq a < b <1$,
$$ \frac{1}{N} \big| \{ k+1 \leq n \leq N+k: (x_n \bmod 1) \in [a,b) \} \big| \rightarrow b-a \quad \text{uniformly in } k = 0, 1, 2, \dots.$$ 
\end{enumerate}
These notions can be naturally extended to sequences in $\mathbb{R}^d$. 
We use the following facts to provide another example of jointly ergodic transformations, which are not uniformly jointly ergodic.
\begin{itemize}
\item For a.e. $k$-tuple $(\alpha_1, \dots, \alpha_k)$, $(2^n \alpha_1, \dots, 2^n \alpha_k)_{n \in \mathbb{N}}$ is uniformly distributed $\bmod \, 1$. For $k=1$, this is a classical result due to Borel (\cite{Bor}) (see also \cite{KN}, Chapter 1, Corollary 8.1). For $k \geq 2$, this follows from Weyl's criterion. One can also obtain this result by applying the pointwise ergodic theorem to the $k$-fold product transformation $T \times \cdots \times T$, where $Tx = 2x \, \bmod 1$. 
\item For any $\alpha$, $2^n \alpha$ is not well-distributed $\bmod \, 1$.
(See \cite{KN}, Chapter 1, Theorem 5.3.)
\end{itemize}

\begin{ex} 
\label{ex4}
Let $\alpha_0 = 0$ and let $(\alpha_1, \dots, \alpha_k) \in \mathbb{R}^k$ be such that $(2^n \alpha_1, \dots, 2^n \alpha_k)_{n \in \mathbb{N}}$ is uniformly distributed $\bmod \, 1$.
For each $j = 0, 1, \dots, k$, define $T_j: [0,1] \rightarrow [0,1]$  by 
$$T_j(x) = (2x + \alpha_j) \, \bmod 1, \quad x \in [0,1].$$ 
Then,  for $j = 0, 1, \dots, k$, 
$$T_j^n(x) = (2^n x + (2^n - 1) \alpha_j) \, \bmod \, 1, \quad  x\in [0,1].$$
Notice that for $f_j (x) = \exp(2 \pi i h_j x)$ with $h_j \in \mathbb{Z}$ for $0 \leq j \leq k$,
\begin{align*} 
\int \prod_{j=0}^k f_j (T_j^n x) dx &= \exp(2 \pi i \sum_{j=1}^k h_j (2^n -1)\alpha_j) \int \exp (2 \pi i \sum_{j=0}^k h_j 2^n x) \, dx \\
& = \begin{cases} 
      \exp(2 \pi i \sum_{j=1}^k h_j (2^n -1)\alpha_j), & h_0 + h_1 + \cdots +h_k = 0, \\
      0, &  h_0 + h_1 + \cdots +h_k \ne 0.
   \end{cases}
\end{align*}
If $(h_0, h_1,  \cdots,  h_k) \ne (0, 0, \cdots , 0)$, then $h_0 + h_1 + \dots + h_k \ne 0$ or some $h_i \ne 0$ for $1 \leq i \leq k$, so we have that
\begin{equation*} 
\lim_{N \rightarrow \infty} \frac{1}{N} \sum_{n=0}^{N-1} \int \prod_{j=0}^k f_j (T_j^n x) dx  = 
\begin{cases} 
      0, & (h_0, h_1,  \cdots,  h_k) \ne (0, 0, \cdots , 0), \\
      1, &  h_0 = h_1 = \cdots = h_k =0.
   \end{cases}
\end{equation*}
Since the set of trigonometric polynomials is dense in $L^{\infty}(\lambda)$,  for any $g_0, g_1, \dots, g_k \in L^{\infty}$,
\begin{equation}
    \label{je.eq}
    \lim_{N \rightarrow \infty} \frac{1}{N} \sum_{n=0}^{N-1} \int \prod_{j=0}^k g_j (T_j^n x)   dx =\prod_{j=0}^k \int g_j(x) \, dx.
\end{equation}
Since $T_0 \times T_1 \times \cdots \times T_k$ are ergodic, it follows from Theorem 2.1 in \cite{BeBe2} that  formula \eqref{je.eq}  implies that 
$T_0, T_1, \dots, T_k$ are jointly ergodic. (This fact also follows from Theorem \ref{criterion:JE2} below). 

However, $T_0, T_1$ are not uniformly jointly ergodic and so $T_0, T_1, \dots, T_k$ are not uniformly jointly ergodic, hence, in view of Remark \ref{rem:mixje}, they are not jointly mixing.
Indeed, for $f_0(x) = \exp (- 2 \pi i h x)$ and $f_1(x) = \exp(2 \pi i h x)$, where $h$ is a non-zero integer, the limit of the following average 
\begin{equation*}
    \label{eq:nonuniform;ex}
    \frac{1}{N - M} \sum_{n=M}^{N-1} \int f_0 (T_0^{n} x) f_1(T_1^{n} x) \, dx 
= \frac{1}{N- M} \sum_{n=M}^{N-1} \exp (2 \pi i h (2^n -1) \alpha_1).
\end{equation*} 
does not converge to $0$ for some $h$ as $N -M \rightarrow \infty$, since $2^n \alpha_1$ is not well-distributed.
\end{ex}

\begin{Remark}
Example \ref{ex4} shows that Theorems \ref{thm:L^2_joint} and \ref{Thm:JM2:new} may fail if the involved transformations do not have distinct entropies.
Indeed, $T_0, T_1$ are  mixing transformations belonging to $\mathcal{T}$ with $h(T_0) = h(T_1) = \log 2$, but they are not uniformly jointly ergodic, and not jointly mixing. 
\end{Remark}

Now we extend the notion of joint ergodicity to the case when the involved measure preserving transformations have potentially different invariant measures. 
\begin{Definition}
Let $(X, \mathcal{B}, \mu)$ be a probability space. Let $T_1, \dots, T_k$ be measurable transformations on $(X, \mathcal{B})$ such that for each $i$, there is a $T_i$-invariant probability measure $\mu_i$, which is equivalent to $\mu$.
We say that $T_1, \dots, T_k$ are {\em jointly ergodic} with respect to $(\mu; \mu_1, \dots, \mu_k)$ if for any $f_1, \dots, f_k \in L^{\infty}$, 
$$\lim\limits_{N \rightarrow \infty} \frac{1}{N } \sum\limits_{n=0}^{N-1}  T_1^{n} f_1 \cdot T_2^n f_2 \cdots T_k^n f_k = \prod_{i=1}^k \int f_i \, d \mu_i \quad \text{ in } L^2(\mu).$$
\end{Definition}

In view of the fact that joint ergodicity and uniform joint ergodicity are different notions, it is desirable to have criteria for joint ergodicity similar to Theorem \ref{criterion:JE}.
An analogue of Theorem \ref{criterion:JE} is furnished by the following theorem.
The proof is analogous to the proof of Theorem \ref{criterion:JE} and is omitted.
\begin{Theorem}
\label{criterion:JE2}
Let $(X, \mathcal{B}, \mu)$ be a probability space. Let $T_1, \dots, T_k$ be measurable transformations on $(X, \mathcal{B})$ such that for each $i$, there is a $T_i$-invariant probability measure $\mu_i$, which is equivalent to $\mu$.
The following conditions are equivalent:
\begin{enumerate}[(i)]
\item $T_1, \dots, T_k$ are joint ergodic with respect to $(\mu; \mu_1, \dots, \mu_k)$. 
\item for any bounded measurable functions $f_0, f_1, \dots f_k$, 
\begin{equation*}
\lim_{N \rightarrow \infty} \frac{1}{N} \sum_{n=1}^{N} \int f_0 T_1^{n} f_1 \cdot T_2^n f_2 \cdots T_k^n f_k \, d \mu = \int f_0 \, d \mu \cdot \prod_{i=1}^k \int f_i \, d \mu_i. 
\end{equation*} 
\item \begin{enumerate}
\item $T_1 \times T_2 \times \cdots \times T_k$ is ergodic with respect to $\mu_1 \times \mu_2 \times \cdots \times \mu_k$
\item for  any bounded measurable functions $f_1, \dots f_k$,
\begin{equation*}
\lim_{N \rightarrow \infty} \frac{1}{N} \sum_{n=1}^{N}  \int T_1^{n} f_1 \cdot T_2^n f_2 \cdots T_k^n f_k \, d \mu = \prod_{i=1}^k \int f_i \, d \mu_i .
\end{equation*}
\end{enumerate}
\item \begin{enumerate}
\item $T_1 \times T_2 \times \cdots \times T_k$ is ergodic with respect to $\mu_1 \times \mu_2 \times \cdots \times \mu_k$
\item there is $C>0$ such that for any $A_1, A_2, \dots, A_k \in \mathcal{B}$
\begin{equation*}
\limsup_{N \rightarrow \infty} \frac{1}{N} \sum_{n=1}^{N} \mu (T_1^{-n} A_1 \cap T_2^{-n} A_2 \cap \cdots \cap T_k^{-n} A_k) \leq C \prod_{i=1}^k \mu_i(A_i) .
\end{equation*}
\end{enumerate}
\end{enumerate} 
\end{Theorem}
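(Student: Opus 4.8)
The plan is to follow the proof of Theorem \ref{criterion:JE} essentially verbatim, replacing the uniform averages $\frac{1}{N-M}\sum_{n=M}^{N-1}$ by the Ces\`aro averages $\frac{1}{N}\sum_{n=1}^{N}$ throughout, and replacing the uniform van der Corput inequality (Lemma \ref{vdC}) by its classical one-sided counterpart. As in that proof, the chain $(i)\Rightarrow(ii)\Rightarrow(iii)(b)\Rightarrow(iv)(b)$ is immediate: linearity of the averaging operator handles the first two implications, and the substitution $f_i = 1_{A_i}$ reduces condition $(iii)(b)$ to condition $(iv)(b)$ with $C = 1$.

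Next I would prove $(ii)\Rightarrow(iii)(a)$ exactly as before. Ergodicity of each $T_i$ follows by plugging $f_0 = 1_{A^c}$, $f_i = 1_A$, and $f_j \equiv 1$ for $j \ne i$ into $(ii)$, which forces $\mu(A)\mu(A^c) = 0$ for any $T_i$-invariant set $A$. For the eigenvalue condition of Lemma \ref{prop2.1}, suppose $T_i f_i = \lambda_i f_i$ with $|f_i| = 1$, $\prod_i \lambda_i = 1$, and $\lambda_j \ne 1$ for some $j$; then $\prod_i T_i^n f_i = (\prod_i \lambda_i)^n \prod_i f_i = \prod_i f_i$ is independent of $n$, so the average in $(ii)$ with $f_0 = \overline{\prod_i f_i}$ equals $1$ for every $N$, while its asserted value $\int f_0\,d\mu\cdot\prod_i\int f_i\,d\mu_i$ vanishes because $\int f_j\,d\mu_j = 0$. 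This contradiction is insensitive to the averaging scheme, so it carries over unchanged.

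The substantive implication is $(iv)\Rightarrow(i)$. First I would establish the Ces\`aro refinement
\[ \lim_{N\to\infty}\frac{1}{N}\sum_{n=1}^{N}\int T_1^n f_1\cdots T_k^n f_k\,d\mu = \prod_{i=1}^k\int f_i\,d\mu_i \]
by the measure-theoretic argument of Theorem \ref{criterion:JE}: defining $\nu_N(A_1\times\cdots\times A_k) = \frac1N\sum_{n=1}^N \mu(\bigcap_i T_i^{-n}A_i)$ on $X^k$, every weak* limit point $\nu$ is $(T_1\times\cdots\times T_k)$-invariant and, by $(iv)(b)$, absolutely continuous with respect to $\mu_1\times\cdots\times\mu_k$; since $(iv)(a)$ makes the product ergodic, Lemma \ref{equivalent_measure} forces $\nu = \mu_1\times\cdots\times\mu_k$, so $\nu_N$ converges weak* to $\mu_1\times\cdots\times\mu_k$ and the displayed limit holds first for continuous and then for bounded measurable $f_i$. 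Then, writing $f_1 = (f_1 - \int f_1\,d\mu_1) + \int f_1\,d\mu_1$, it suffices to show $\frac1N\sum_{n=1}^N T_1^n f_1\cdots T_k^n f_k \to 0$ in $L^2(\mu)$ whenever $\int f_1\,d\mu_1 = 0$. Applying the van der Corput trick to $u_n = T_1^n f_1\cdots T_k^n f_k$ and setting $F = \prod_i f_i$ on $X^k$, the correlation $\langle u_{n+h}, u_n\rangle$ equals $\int\prod_i T_i^n(f_i\, T_i^h f_i)\,d\mu$, so the refinement gives $\lim_N \frac1N\sum_n \langle u_{n+h},u_n\rangle = \prod_i \int f_i\, T_i^h f_i\,d\mu_i = \int F\cdot(T_1\times\cdots\times T_k)^h F\,d(\mu_1\times\cdots\times\mu_k)$; averaging over $h$ and invoking the mean ergodic theorem for the ergodic product system yields $\left(\int F\,d(\mu_1\times\cdots\times\mu_k)\right)^2 = \prod_i\left(\int f_i\,d\mu_i\right)^2 = 0$.

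The only point requiring attention beyond transcription is that Lemma \ref{vdC}, being the uniform inequality, must be replaced by its standard Ces\`aro version, namely $\limsup_N\big\|\frac1N\sum_{n=1}^N u_n\big\|^2 \le \limsup_H \frac1H\sum_{h=1}^H \limsup_N \frac1N\sum_{n=1}^N \mathrm{Re}\,\langle u_{n+h},u_n\rangle$; this is a classical fact. I do not expect a genuine obstacle here, since the non-uniform van der Corput estimate is in fact the simpler of the two and every remaining step is structurally identical to the uniform argument already carried out in the proof of Theorem \ref{criterion:JE}.
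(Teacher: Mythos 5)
Your proposal is correct and coincides with the paper's intended argument: the paper explicitly omits the proof of this theorem, stating only that it is analogous to that of Theorem \ref{criterion:JE}, which is exactly the transcription you carry out. The one substantive adjustment you identify (replacing the uniform van der Corput inequality by its classical Ces\`aro form) is indeed the only point where the two proofs differ, and it is handled correctly.
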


The following example illustrates Theorem \ref{criterion:JE2}. 
\begin{ex}
Let $T_1(x) = 2x \bmod 1$ and $T_2(x) = (2x + \alpha) \bmod 1$, where $(2^n \alpha)_{n \in \mathbb{N}}$ is uniformly distributed $\bmod \, 1$. Let $T_{\beta} x = \beta x \bmod 1$, where $\beta > 1$, $\beta \ne 2$ and $\log \beta \ne \frac{\pi^2}{ 6 \log 2}$. 
Then $T_1, T_2, T_G, T_{\beta}$ are jointly ergodic (but not uniformly jointly ergodic) with respect to $(\lambda; \lambda, \lambda, \mu_G, \mu_{\beta})$.

Since $T_1 \times T_2 \times T_{G} \times T_{\beta}$ is ergodic with respect to $\lambda \times \lambda \times \mu_G \times \mu_{\beta}$, by Theorem \ref{criterion:JE2} (iii),
the result in question will follow if we show that for any $f_1, f_2, f_3, f_4 \in L^{\infty} (\lambda)$, 
\begin{equation}
\label{eqn:7.3}
\lim_{N \rightarrow \infty} \frac{1}{N} \sum_{n=1}^N \int f_1 (T_1^n x) f_2 (T_2^n x) f_3 (T_G^n x) f_4 (T_{\beta}^n x) \, d \lambda 
= \int f_1 \, d \lambda \int f_2 \, d \lambda \int f_3 \, d \mu_G \int f_4 \, d \mu_{\beta}.
\end{equation}

By the standard approximation argument, it is enough to show that \eqref{eqn:7.3} holds for $f_1 = \exp (2 \pi i h_1 x)$ and $f_2 = \exp (2 \pi h_2 x)$ for any $h_1, h_2 \in \mathbb{Z}$.  Note that
\begin{equation*}
\begin{split}
&\int f_1 (T_1^n x) f_2 (T_2^n x) f_3 (T_G^n x) f_4 (T_{\beta}^n x) \,  d \lambda  \\
&= \exp ( 2 \pi i h_2 (2^n -1) \alpha ) \int \exp (2 \pi i (h_1 + h_2) 2^n x) f_3 (T_G^n x) f_4 (T_{\beta}^n x) \, d \lambda \\
&= \exp ( 2 \pi i h_2 (2^n -1) \alpha ) \int g(T_2^n x)  f_3 (T_G^n x) f_4 (T_{\beta}^n x) \, d \lambda,
\end{split}
\end{equation*} 
where $g(x) = \exp (2 \pi i (h_1 + h_2) x)$.
Since $T_2, T_G, T_{\beta}$ are jointly mixing, 
\begin{equation*}
\lim_{n \rightarrow \infty} \int g(T_2^n x)  f_3 (T_G^n x) f_4 (T_{\beta}^n x) \, d \lambda
= \int g  \, d \lambda \int f_3 \, d \mu_G \int f_4 \, d \mu_{\beta}.
\end{equation*}
Note that if  $h_1 + h_2 \ne 0$, then $\int g \, d \lambda =0$ and if $h_2 \ne 0$, 
\[ \lim_{N \rightarrow \infty} \frac{1}{N} \sum_{n=1}^N \exp ( 2 \pi i h_2 (2^n -1) \alpha ) = 0,\]
since $(2^n \alpha)_{n \in \mathbb{N}}$ is uniformly distributed $\bmod \, 1$.
Therefore if $h_1 + h_2 \ne 0$ or $h_2 \ne 0$, (equivalently if $(h_1, h_2) \ne (0,0)$), then
\[ \lim_{N \rightarrow \infty} \frac{1}{N} \sum_{n=1}^N \int f_1 (T_1^n x) f_2 (T_2^n x) f_3 (T_G^n x) f_4 (T_{\beta}^n x)  d \lambda = 0, \]
which implies \eqref{eqn:7.3}.  This shows that $T_1, T_2, T_G, T_{\beta}$ are jointly ergodic with respect $(\lambda; \lambda, \lambda, \mu_G, \mu_{\beta})$. 
However, as in Example \ref{ex4}, $T_1$ and $T_2$ are not uniformly jointly ergodic, and so $T_1, T_2, T_G, T_{\beta}$ are not uniformly jointly ergodic.
\end{ex}

\section{Disjointness and joint ergodicity}
\label{sec:disjoint}

In this section, we establish additional results on joint ergodicity which utilize the notion of disjointness of dynamical systems that was introduced by Furstenberg in \cite{Fur}. We remark that while in the previous sections we were mostly dealing with the maps of the interval, the main results in this section is quite general. 

\begin{Definition}
Measure preserving systems $(X_i, \mathcal{B}_i, \mu_i, T_i), 1 \leq i \leq k,$ are {\em mutually disjoint} if the only $T_1 \times \cdots \times T_k$-invariant measure on $X_1 \times \cdots \times X_k$, which has the property that the projection of $\mu$ onto $X_i$ is $\mu_i$ for all $i = 1, \dots, k$, is the product measure $\mu_1 \times \cdots \times \mu_k$.
\end{Definition}

By slight abuse of language, we will often say that $T_1, \dots, T_k$
are mutually disjoint if it is clear from the context what are the systems $(X_i,\mathcal{B}_i, \mu_i)$. 

The following simple fact, which will be needed in the sequel, is well known to aficionados. We include the proof for the convenience of the reader.
\begin{Lemma}
\label{lem:dis:prod:erg}  
If measure preserving systems $\mathsf{X}_i = (X_i, \mathcal{B}_i, \mu_i, T_i), 1 \leq i \leq k$, are ergodic and mutually disjoint, then the product system $\mathsf{X}_1 \times \cdots \times \mathsf{X}_k$ is ergodic.  
\end{Lemma}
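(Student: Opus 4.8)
The plan is to argue by contradiction, reducing ergodicity of the product to the disjointness hypothesis through the conditional measures associated to a hypothetical invariant set. Suppose that $T := T_1 \times \cdots \times T_k$ fails to be ergodic with respect to $\mu := \mu_1 \times \cdots \times \mu_k$. Then there is a $T$-invariant set $A \subset X_1 \times \cdots \times X_k$ with $0 < \mu(A) < 1$. I would introduce the conditional probability measure $\mu_A(\cdot) := \mu(\cdot \cap A)/\mu(A)$. Since $T^{-1}A = A$ and $\mu$ is $T$-invariant, the short computation $\mu_A(T^{-1}B) = \mu(A \cap T^{-1}B)/\mu(A) = \mu(T^{-1}(A \cap B))/\mu(A) = \mu_A(B)$ shows that $\mu_A$ is again $T$-invariant.

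The key step is to identify the projections of $\mu_A$ and recognize them as forced by ergodicity. Let $\pi_i$ denote the projection onto the $i$-th coordinate $X_i$, and set $\nu_i := (\pi_i)_* \mu_A$. Because $\mu_A \ll \mu$ (indeed $\mu_A(B) \leq \mu(B)/\mu(A)$), we obtain $\nu_i \ll (\pi_i)_*\mu = \mu_i$; and since $\pi_i \circ T = T_i \circ \pi_i$, the measure $\nu_i$ is $T_i$-invariant. Now $(X_i, \mathcal{B}_i, \mu_i, T_i)$ is ergodic by hypothesis, so Lemma \ref{equivalent_measure} applies to the $T_i$-invariant probability measure $\nu_i \ll \mu_i$ and yields $\nu_i = \mu_i$ for each $i = 1, \dots, k$.

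Thus $\mu_A$ is a $T$-invariant probability measure on $X_1 \times \cdots \times X_k$ whose projection onto every factor $X_i$ equals $\mu_i$. By the definition of mutual disjointness, the only such measure is the product $\mu_1 \times \cdots \times \mu_k = \mu$, whence $\mu_A = \mu$. This is a contradiction, since $\mu_A(A) = 1$ while $\mu(A) < 1$, and so $\mu$ must be ergodic. The argument is short, and the only point requiring genuine care is the verification that $\nu_i = (\pi_i)_*\mu_A$ is both absolutely continuous with respect to $\mu_i$ and $T_i$-invariant, so that Lemma \ref{equivalent_measure} is applicable; once this is in place the disjointness hypothesis does the rest. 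I anticipate no serious obstacle beyond making sure the standing regularity assumptions on the spaces legitimize forming the conditional measure $\mu_A$ and pushing it forward under the $\pi_i$.
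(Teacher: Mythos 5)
Your proof is correct, but it follows a genuinely different route from the one in the paper. The paper argues by induction on $k$: it first invokes Furstenberg's result that disjoint systems have no nontrivial common factor, deduces that $\mathrm{spec}(T_1)\cap\mathrm{spec}(T_2)=\{1\}$, and concludes ergodicity of $\mathsf{X}_1\times\mathsf{X}_2$ from the eigenvalue criterion of Lemma \ref{prop2.1}; it then repeats the argument with $\mathsf{Y}_1=\mathsf{X}_1\times\cdots\times\mathsf{X}_i$ and $\mathsf{Y}_2=\mathsf{X}_{i+1}$, using that mutual disjointness makes these two systems disjoint. Your argument instead works in one shot directly from the definition of mutual disjointness: the conditional measure $\mu_A$ on a putative nontrivial invariant set is a $T_1\times\cdots\times T_k$-invariant measure whose marginals, being $T_i$-invariant and absolutely continuous with respect to $\mu_i$, must equal $\mu_i$ by Lemma \ref{equivalent_measure}; so $\mu_A$ is a joining and hence the product measure, contradicting $\mu_A(A)=1>\mu(A)$. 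All the steps check out (the identity $A\cap T^{-1}B=T^{-1}(A\cap B)$, the bound $\mu_A\le \mu/\mu(A)$ giving absolute continuity of the marginals, and the intertwining $\pi_i\circ T=T_i\circ\pi_i$ giving their invariance), and nothing beyond measurability of the projections is needed to push $\mu_A$ forward. What your approach buys is self-containedness and the avoidance of both the spectral machinery and the external reference to Furstenberg's Proposition I.2; it also makes transparent exactly where the full $k$-fold disjointness hypothesis is used, whereas in the paper's induction this is slightly hidden in the claim that $\mathsf{X}_1\times\cdots\times\mathsf{X}_i$ and $\mathsf{X}_{i+1}$ are disjoint. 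What the paper's route buys is the intermediate structural information that the point spectra of the factors intersect trivially, which is of independent interest but not needed for the lemma itself.
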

\begin{proof}
Let us first show that $\mathsf{X}_1 \times \mathsf{X}_2$ is ergodic. 
Since $\mathsf{X}_1$ and $\mathsf{X}_2$ are disjoint, they do not have a nontrivial common factor (see Proposition I.2 in \cite{Fur}). 
So, in particular, $\text{spec } (T_1) \, \bigcap \text{ spec } (T_2) = \{1\}$, where  $\text{spec}(U)$ denotes the set of eigenvalues of $U$.
This, in turn, implies that  $\mathsf{X}_1 \times \mathsf{X}_2$ is an ergodic system. (c.f. Lemma \ref{prop2.1}.) 

Applying the same argument to systems $\mathsf{Y}_1 = \mathsf{X}_1 \times \mathsf{X}_2$ and $\mathsf{Y}_2 = \mathsf{X}_{3}$, we get that $\mathsf{X}_1 \times \mathsf{X}_2 \times \mathsf{X}_3$ is ergodic. And so on.  
\end{proof}

It is not hard to see that if $T_i $ are irrational rotations on $\mathbb{T}$, $T_i(x) = x + \alpha_i$, $i = 1, 2, \dots, k$, where $1, \alpha_1, \dots, \alpha_k$ are rationally independent, then $T_1, \dots, T_k$ are mutually disjoint. This is a special case of Proposition \ref{prop:dis} below. 

We recall first some pertinent definitions.
\begin{Definition}
Let $(X,d)$ be a compact metric space and let $T:X \rightarrow X$ be a homeomorphism. 
A topological dynamical system $(X,T)$ is {\em distal} if 
\[ \inf \{ d(T^n x, T^n y) : n \in \mathbb{Z} \} > 0 \quad \text{for any } x \ne y.\]
\end{Definition}

A measure-theoretic analogue of distality was introduced by Parry in \cite{Pa3}.
\begin{Definition}
Let $(X, \mathcal{B}, \mu, T)$ be a measure preserving transformation. 
A decreasing sequence $X = S_0 \supset S_2 \supset \cdots$ of measurable sets with $\mu(S_n) \rightarrow 0$ is called a {\em separating sieve} if there exists a set $M$ of measure zero with the following property: if $x, y \in X \setminus M$ and if for each $N$ there exists $n$ such that $T^n x, T^n y \in S_N$, then $x=y$.
A measure preserving system $(X, \mathcal{B}, \mu, T)$ is called {\em measurably distal} if it has a separating sieve.
\end{Definition}

It is not hard to see that if $(X,T)$ is distal, then for any $T$-invariant Borel probability measure $\mu$ on $(X, \mathcal{B})$,  $(X, \mathcal{B}, \mu, T)$ is measurably distal.
In particular, any uniquely ergodic\footnote{A topological dynamical system $(X,T)$ is called {\em uniquely ergodic} if there is a unique $T$-invariant Borel probability measure $\mu$.} distal transformation such as, say, $S(x,y) = (x+ \alpha, y + a x + \beta)$, where $\alpha \notin \mathbb{Q}, a \in \mathbb{Z}\setminus\{0\}$ and $\beta \in \mathbb{R}$, is measurably distal. 
This observation will be utilized in Corollary \ref{ex:skewprod} below.

\begin{prop}
\label{prop:dis}
Let $ \mathsf{X}_i = (X_i, \mathcal{B}_i, \mu_i, T_i), 1 \leq i \leq k,$ be ergodic, measurably distal systems. If Kronecker factors\footnote{ {\em Kronecker factor} of an ergodic system is the largest factor of the system that is isomorphic to a rotation on a compact abelian group.} of $T_i$ are mutually disjoint, then $T_1, \dots, T_k$ are mutually disjoint.
\end{prop}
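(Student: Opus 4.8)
The plan is to run the Furstenberg--Zimmer structure theorem for measurably distal systems against an arbitrary joining of the $\mathsf{X}_i$, using the disjointness of the Kronecker factors only to annihilate correlations at the bottom of each distal tower and then propagating independence upward by transfinite induction.

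First I would reduce to ergodic joinings. Let $\lambda$ be a $T_1 \times \cdots \times T_k$-invariant measure on $X_1 \times \cdots \times X_k$ whose $i$-th marginal is $\mu_i$ for every $i$; the goal is $\lambda = \mu_1 \times \cdots \times \mu_k$. In the ergodic decomposition $\lambda = \int \lambda_\omega \, dP(\omega)$ each $\lambda_\omega$ is invariant, and its $i$-th marginal $(\pi_i)_*\lambda_\omega$, being a factor of the ergodic system $(X_1 \times \cdots \times X_k, \lambda_\omega)$, is ergodic; since $\int (\pi_i)_*\lambda_\omega \, dP(\omega) = \mu_i$ with $\mu_i$ ergodic, uniqueness of the ergodic decomposition forces $(\pi_i)_*\lambda_\omega = \mu_i$ for $P$-almost every $\omega$. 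Hence almost every ergodic component is itself a joining, and it suffices to treat ergodic $\lambda$. Write $\mathsf{Z} = (X_1 \times \cdots \times X_k, \lambda, T_1 \times \cdots \times T_k)$, an ergodic system of which each $\mathsf{X}_i$ is a factor via $\pi_i$; consequently every projection of $\lambda$ onto a subproduct of the $X_i$ is again an ergodic joining.

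The base of the induction is supplied by the Kronecker hypothesis. Let $K_i$ denote the Kronecker factor of $\mathsf{X}_i$. Pushing $\lambda$ forward to $K_1 \times \cdots \times K_k$ yields a $T_{K_1} \times \cdots \times T_{K_k}$-invariant measure whose $i$-th marginal is the Haar measure $m_{K_i}$, i.e. a joining of $K_1, \dots, K_k$. Since by hypothesis the $K_i$ are mutually disjoint, this pushforward equals $m_{K_1} \times \cdots \times m_{K_k}$; that is, the factors $K_1, \dots, K_k$ are mutually independent under $\lambda$. (Here I use that for ergodic systems the maximal equicontinuous factor coincides with the Kronecker factor, so $K_i$ is genuinely the bottom of the distal tower of $\mathsf{X}_i$.)

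The crux is to lift this bottom-level independence up the towers. By the structure theorem each $\mathsf{X}_i$ sits in a transfinite tower $K_i = \mathsf{X}_i^{(0)} \to \mathsf{X}_i^{(1)} \to \cdots \to \mathsf{X}_i$ of isometric (compact group) extensions, with inverse limits at limit ordinals. I would prove by transfinite induction, run simultaneously in all $k$ towers (padding shorter towers by their top factor), that $\mathsf{X}_1^{(\alpha)}, \dots, \mathsf{X}_k^{(\alpha)}$ are mutually independent under $\lambda$ at every level $\alpha$: the case $\alpha = 0$ is the Kronecker independence just established, and the limit-ordinal step is immediate since mutual independence of an increasing family of sub-$\sigma$-algebras passes to their join. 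The substantive step is a single isometric extension: assuming $\mathsf{X}_1^{(\alpha)}, \dots, \mathsf{X}_k^{(\alpha)}$ are mutually independent, one must establish the same for the one-step extensions $\mathsf{X}_i^{(\alpha+1)}$. Here I would invoke the Mackey--Zimmer description of an ergodic joining of compact group extensions over a fixed product base: such a joining is again an isometric extension of the product base governed by a closed subgroup of the product of the structure groups, and the only way it can fail to be the product joining is for two of the fibre extensions to share nontrivial equicontinuous data, which descends to a nontrivial common factor of two of the Kronecker factors $K_i, K_j$, contradicting their disjointness. Carrying independence through every level and passing to the generated $\sigma$-algebras gives $\lambda = \mu_1 \times \cdots \times \mu_k$, so $T_1, \dots, T_k$ are mutually disjoint. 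I expect this isometric-extension step --- pinning down precisely how a nonproduct joining of two compact group extensions over independent bases manufactures shared Kronecker-level structure --- to be the main obstacle, since the naive lifting of disjointness through isometric extensions is false (over a trivial base, two copies of one and the same rotation form a diagonal joining that is not a product), and it is exactly the disjointness of the Kronecker factors that excludes this phenomenon.
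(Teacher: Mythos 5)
Your reduction to ergodic joinings is correct, the base case (the pushforward of $\lambda$ to $K_1\times\cdots\times K_k$ must be Haar product measure by the hypothesis) is exactly right, and the limit-ordinal step is indeed immediate. Your route is genuinely different from the paper's: the paper never opens up the Furstenberg--Zimmer tower. It instead quotes Berg's quasi-disjointness theorem (\cite{KBerg1}, \cite{KBerg2}; see also \cite{MoRiRo}) --- an ergodic system $\mathsf{X}$ and an ergodic, measurably distal system $\mathsf{Y}$ are disjoint if and only if their Kronecker factors are disjoint --- and runs a short induction on the number of systems: having shown $\mathsf{X}_1,\dots,\mathsf{X}_i$ mutually disjoint, it notes that $\mathsf{X}_1\times\cdots\times\mathsf{X}_i$ is ergodic by Lemma \ref{lem:dis:prod:erg}, that any joining of all $i+1$ systems projects to the product measure on the first $i$ coordinates, and then applies Berg's theorem to $\mathsf{X}=\mathsf{X}_1\times\cdots\times\mathsf{X}_i$ and $\mathsf{Y}=\mathsf{X}_{i+1}$ (the Kronecker factor of the ergodic product is the product of the $K_j$, whose spectrum meets that of $K_{i+1}$ only in $\{1\}$ by mutual disjointness of the $K_j$). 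What the paper's approach buys is that all the hard structural work is outsourced to a citable theorem; what yours would buy, if completed, is a self-contained proof.

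The genuine gap is the successor step of your transfinite induction, which you yourself flag as ``the main obstacle'': it cannot be left at the level of a plausibility argument, because it \emph{is} the content of Berg's theorem. The claim that an ergodic joining of the isometric extensions $\mathsf{X}_i^{(\alpha+1)}\to\mathsf{X}_i^{(\alpha)}$ over the (independent) product base can fail to be the product ``only if two of the fibre extensions share nontrivial equicontinuous data, which descends to a nontrivial common factor of two of the Kronecker factors'' is asserted, not proved, and neither half is routine. First, the Mackey--Zimmer analysis produces a proper closed subgroup of $G_1\times\cdots\times G_k$ (or a subtler object, since isometric extensions are homogeneous-space rather than group extensions in general); extracting from it an obstruction localized to a \emph{pair} of coordinates requires an argument, as the groups need not be abelian and the defect need not split pairwise. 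Second, and more seriously, the ``descent'': the putative shared equicontinuous structure lives over the level-$\alpha$ product base, which may sit arbitrarily far above the Kronecker factors, and pushing it down to a common nontrivial factor of $K_i$ and $K_j$ at the bottom of the towers is precisely the delicate induction carried out by Berg and, in modern form, in \cite{MoRiRo}; it does not follow formally from the classification of joinings of isometric extensions. Your diagonal-rotation example correctly shows that \emph{some} hypothesis on the bottom level is needed, but it does not supply the mechanism by which higher-level correlation is forced downward. To close the gap you must either prove this descent in detail or do what the paper does: cite Berg's theorem and replace the induction over the tower by an induction over $k$.
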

\begin{proof}
The proof goes by induction.

Berg (\cite{KBerg1}, \cite{KBerg2}) showed that if $\mathsf{X}$ is an ergodic system and $\mathsf{Y}$ is an ergodic and measurably distal system, then
$\mathsf{X}$ and $\mathsf{Y}$ are disjoint if and only if the Kronecker factors of $\mathsf{X}$ and $\mathsf{Y}$ are disjoint. (See also Theorem 1.1 in \cite{MoRiRo}.)

By Berg's result,  $\mathsf{X}_1$ and $\mathsf{X}_2$ are disjoint. Let $i \geq 2$. We will prove that if $\mathsf{X}_1, \dots, \mathsf{X}_i$ are mutually disjoint, then $\mathsf{X}_1, \dots, \mathsf{X}_{i+1}$ are mutually disjoint. 

Let $\nu$ be a $T_1 \times \cdots \times T_{i+1}$-invariant measure such that
for any $j = 1, 2, \dots, i+1$ the projection of $\nu$ onto $X_j$ equals $\mu_j$. Then the projection of $\nu$ on $X_1 \times \cdots \times X_i$ is $\mu_1 \times \cdots \times \mu_i$. Moreover, $\mathsf{X}_1, \dots, \mathsf{X}_i$ are mutually disjoint, so $\mathsf{X}_1 \times \cdots \times \mathsf{X}_i$ is ergodic by Lemma \ref{lem:dis:prod:erg}. 
Applying Berg's result to $\mathsf{X} = \mathsf{X}_1 \times \cdots \times \mathsf{X}_i$ and $\mathsf{Y} =  \mathsf{X}_{i+1}$ yields that $\nu = \mu_1 \times \cdots \times \mu_{i+1}$.
\end{proof}

The following immediate corollary of Proposition \ref{prop:dis}  will be used below in Proposition \ref{ex:dijoint:dim2}. 

\begin{Corollary}
\label{ex:skewprod}
Consider the following measure preserving transformations on $(\mathbb{T}^2, \mathcal{B}, \lambda)$, where $\mathbb{T}^2$ is the $2$-dimensional torus, $\mathcal{B}$ is the Borel $\sigma$-algebra  and $\lambda$ is the Lebesgue measure:
\begin{equation*}
    S_j(x,y) = (x+ \alpha_j, y + a_j x + \beta_j ),
\end{equation*}
where $\alpha_j \notin \mathbb{Q}$, $a_j \in \mathbb{Z} \setminus \{0\}$ and  $\beta_j \in \mathbb{R}$ for $i=1, 2, \dots, k$. 

If $1, \alpha_1, \alpha_2, \cdots, \alpha_k$ are rationally independent, then $S_1, \dots, S_k$ are mutually disjoint.
\end{Corollary}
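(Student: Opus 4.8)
The plan is to verify that the systems $(\mathbb{T}^2, \mathcal{B}, \lambda, S_j)$ satisfy the hypotheses of Proposition \ref{prop:dis}: each $S_j$ is ergodic and measurably distal, and their Kronecker factors are mutually disjoint. As observed in the discussion preceding this corollary, each affine skew product $S_j(x,y) = (x + \alpha_j, y + a_j x + \beta_j)$ is a uniquely ergodic distal transformation, hence measurably distal with respect to $\lambda$; in particular each $(\mathbb{T}^2, \mathcal{B}, \lambda, S_j)$ is ergodic. Thus the substance of the proof reduces to identifying the Kronecker factor of each $S_j$ and checking mutual disjointness of these factors.

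First I would determine the Kronecker factor of $S_j$ by Fourier analysis. Writing a candidate eigenfunction $f \in L^2(\mathbb{T}^2)$ as $f(x,y) = \sum_{m,n} c_{m,n} e^{2\pi i (mx + ny)}$ and imposing $f \circ S_j = \rho f$ for a constant $\rho$, one compares the coefficient of $e^{2\pi i(px + ny)}$ on both sides and obtains the relation $\rho\, c_{p,n} = c_{p - n a_j,\, n}\, e^{2\pi i((p - n a_j)\alpha_j + n\beta_j)}$. For fixed $n \neq 0$ this forces $|c_{p,n}|$ to be constant along the infinite arithmetic progression $p \mapsto p - n a_j$ (here we use $a_j \neq 0$), which is incompatible with square-summability unless $c_{p,n} = 0$. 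Hence every eigenfunction depends on $x$ alone, the eigenvalues are exactly $\{e^{2\pi i m \alpha_j} : m \in \mathbb{Z}\}$, and the Kronecker factor of $S_j$ is the circle rotation $R_{\alpha_j} : x \mapsto x + \alpha_j$ on $(\mathbb{T}, \lambda)$. (The same computation, with $\rho = 1$, reconfirms the ergodicity of $S_j$.)

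Second, I would show that the rotations $R_{\alpha_1}, \dots, R_{\alpha_k}$ are mutually disjoint. The product $R_{\alpha_1} \times \cdots \times R_{\alpha_k}$ is the rotation of $\mathbb{T}^k$ by the vector $(\alpha_1, \dots, \alpha_k)$. Since $1, \alpha_1, \dots, \alpha_k$ are rationally independent, the orbit $\{n(\alpha_1,\dots,\alpha_k) \bmod 1\}$ is dense in $\mathbb{T}^k$, so this rotation is minimal and therefore uniquely ergodic, its unique invariant measure being Lebesgue measure on $\mathbb{T}^k$, which is precisely the product $\lambda \times \cdots \times \lambda$. Consequently every $R_{\alpha_1} \times \cdots \times R_{\alpha_k}$-invariant measure equals the product measure, so the rotations $R_{\alpha_1}, \dots, R_{\alpha_k}$ are mutually disjoint. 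Feeding this together with the identification of the Kronecker factors into Proposition \ref{prop:dis} gives that $S_1, \dots, S_k$ are mutually disjoint.

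The main obstacle is the identification of the Kronecker factor of each $S_j$ with its base rotation $R_{\alpha_j}$; once this is established, the disjointness of the factors is immediate from the unique ergodicity of the product rotation, which is exactly the point where the rational independence of $1, \alpha_1, \dots, \alpha_k$ is used.
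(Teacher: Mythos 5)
Your proof is correct and follows essentially the same route as the paper: both reduce the claim to Proposition \ref{prop:dis} by identifying the Kronecker factor of each $S_j$ with the base rotation $R_{\alpha_j}$ and then using rational independence to get mutual disjointness of these rotations. The only difference is that you supply a self-contained Fourier-coefficient argument for the Kronecker factor identification (and an explicit unique-ergodicity argument for the disjointness of the rotations), whereas the paper simply cites Anzai for the former and treats the latter as a known special case.
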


\begin{proof}
In view of Proposition \ref{prop:dis}, the result follows from the fact that each $S_j$ is distal, ergodic and its Kronecker factor is given by the projection on the first coordinate (cf. \cite{Anz}, Corollary of Theorem 6.)
\end{proof}


The following theorem provides a convenient criterion for uniform joint ergodicity.  
\begin{Theorem}
\label{Thm:disjoint}
Let $(X, \mathcal{B}, \mu)$ be a probability space. Suppose that $\mathsf{X}_i^{(1)} = (X, \mathcal{B}, \mu_i, S_i)$ ($i= 1,  2, \dots, l$) and $\mathsf{X}_j^{(2)} = (X, \mathcal{B}, \nu_j, T_j)$ ($j=1, 2, \dots, m$) are measure preserving systems such that $\mu_i, \nu_j$ are equivalent to $\mu$ and 
\begin{enumerate}
\item $S_1, \dots, S_l$ are uniformly jointly ergodic with respect to $(\mu; \mu_1, \dots, \mu_l)$: 
 for any $f_1, \dots, f_l \in L^{\infty}$, 
$$\lim\limits_{N -M \rightarrow \infty} \frac{1}{N -M } \sum\limits_{n=M}^{N-1}  S_1^{n} f_1 \cdot S_2^n f_2 \cdots S_l^n f_l = \prod_{i=1}^l \int f_i \, d \mu_i \quad \text{ in } L^2(\mu).$$ 
\item $T_1, \dots, T_m$ are uniformly jointly ergodic with respect to $(\mu; \nu_1, \dots, \nu_m)$:
 for any $g_1, \dots, g_m \in L^{\infty}$, 
$$\lim\limits_{N -M \rightarrow \infty} \frac{1}{N -M } \sum\limits_{n=M}^{N-1}   T_1^{n} g_1 \cdot T_2^n g_2 \cdots T_m^n g_m = \prod_{j=1}^m \int g_j \, d \nu_j \quad \text{ in } L^2(\mu).$$ 
\end{enumerate}
If the product systems $\mathsf{X}^{(1)} = \mathsf{X}_1^{(1)} \times \cdots \times \mathsf{X}_l^{(1)}$ and $\mathsf{X}^{(2)} = \mathsf{X}_1^{(2)} \times \cdots \times \mathsf{X}_m^{(2)}$ are disjoint, then transformations $S_1, \dots, S_l, T_1, \dots, T_m$ are uniformly jointly ergodic with respect to $(\mu; \mu_1, \dots, \mu_l, \nu_1, \dots, \nu_m)$:
 for any $f_1, \dots, f_l, g_1, \dots, g_m \in L^{\infty}$, 
$$\lim\limits_{N -M \rightarrow \infty} \frac{1}{N -M } \sum\limits_{n=M}^{N-1}   S_1^{n} f_1 \cdots S_l^n f_l \cdot T_1^n g_1 \cdots T_m^n g_m = \prod_{i=1}^l \int f_i \, d \mu_i  \cdot \prod_{j=1}^m \int g_j \, d \nu_j \quad \text{ in } L^2(\mu).$$
\end{Theorem}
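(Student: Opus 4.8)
The plan is to verify condition (iii) of Theorem \ref{criterion:JE} for the combined family $S_1,\dots,S_l,T_1,\dots,T_m$ with respect to $(\mu;\mu_1,\dots,\mu_l,\nu_1,\dots,\nu_m)$, and then to invoke the equivalence $(iii)\Leftrightarrow(i)$ to obtain the asserted $L^2(\mu)$-convergence. First I would dispose of condition (iii)(a), the ergodicity of $S_1\times\cdots\times S_l\times T_1\times\cdots\times T_m$ with respect to the full product measure. By hypothesis (1) together with the implication $(i)\Rightarrow(iii)(a)$ of Theorem \ref{criterion:JE} applied to the family $S_1,\dots,S_l$, the product system $\mathsf{X}^{(1)}$ is ergodic; likewise $\mathsf{X}^{(2)}$ is ergodic by hypothesis (2). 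Since $\mathsf{X}^{(1)}$ and $\mathsf{X}^{(2)}$ are disjoint and both ergodic, Lemma \ref{lem:dis:prod:erg} (applied to the two-system product) shows that $\mathsf{X}^{(1)}\times\mathsf{X}^{(2)}$ is ergodic, which is precisely condition (iii)(a).

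The heart of the matter is condition (iii)(b). Here I would form, for $M<N$, the probability measure $\rho_{M,N}$ on $X^{l+m}$ obtained by averaging the pushforwards of $\mu$ under the maps $\Phi_n(x)=(S_1^n x,\dots,S_l^n x,T_1^n x,\dots,T_m^n x)$, that is, $\rho_{M,N}=\frac1{N-M}\sum_{n=M}^{N-1}(\Phi_n)_*\mu$, exactly as in the proof of $(iv)\Rightarrow(i)$ in Theorem \ref{criterion:JE}. Let $\rho$ be any weak${}^*$ limit point as $N-M\to\infty$. Since $\Psi\circ\Phi_n=\Phi_{n+1}$ for $\Psi=S_1\times\cdots\times S_l\times T_1\times\cdots\times T_m$, we get $\Psi_*\rho_{M,N}=\rho_{M+1,N+1}$, and a telescoping argument shows that $\rho$ is $\Psi$-invariant. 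Projecting onto the first $l$ coordinates and testing against continuous functions, the uniform joint ergodicity of $S_1,\dots,S_l$ (condition (iii)(b) for that family, valid after extending from tensor products to all of $C(X^l)$ by Stone--Weierstrass) forces the corresponding marginal of $\rho$ to equal $\mu_1\times\cdots\times\mu_l$; symmetrically, the last $m$ coordinates yield the marginal $\nu_1\times\cdots\times\nu_m$. Thus $\rho$ is a joining of $\mathsf{X}^{(1)}$ and $\mathsf{X}^{(2)}$, and disjointness forces $\rho=\mu_1\times\cdots\times\mu_l\times\nu_1\times\cdots\times\nu_m$. As every limit point coincides with this product measure, $\rho_{M,N}$ converges weak${}^*$ to it, which gives condition (iii)(b) for all continuous $f_1,\dots,f_l,g_1,\dots,g_m$.

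Finally, I would upgrade condition (iii)(b) from continuous to arbitrary bounded measurable functions. The key uniform estimate is that each $S_i$ preserves $\mu_i$ and each $T_j$ preserves $\nu_j$, with all these measures equivalent to $\mu$, so $\int|h\circ S_i^n|\,d\mu\le c\,\|h\|_{L^1(\mu_i)}$ uniformly in $n$ (and similarly for the $T_j$); a telescoping estimate then bounds, uniformly in $n$, the error incurred by replacing each $f_i,g_j$ with an $L^1$-close continuous function, so the convergence passes to the limit. With conditions (iii)(a) and (iii)(b) both established, Theorem \ref{criterion:JE} yields (i), which is the claimed uniform joint ergodicity.

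The main obstacle is the middle step: recognizing every weak${}^*$ limit of the averaged distributions $\rho_{M,N}$ as a joining of $\mathsf{X}^{(1)}$ and $\mathsf{X}^{(2)}$ carrying the correct marginals — this is exactly where the two separate uniform joint ergodicity hypotheses enter — and then extracting $\rho=\mu_1\times\cdots\times\mu_l\times\nu_1\times\cdots\times\nu_m$ from disjointness. By comparison, the $\Psi$-invariance of $\rho$ and the continuous-to-measurable approximation are routine.
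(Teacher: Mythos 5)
Your proposal is correct and follows essentially the same route as the paper: establish ergodicity of the full product from disjointness via Lemma \ref{lem:dis:prod:erg}, then show every weak${}^*$ limit point of the averaged measures $\rho_{M,N}$ is a joining of $\mathsf{X}^{(1)}$ and $\mathsf{X}^{(2)}$ with product marginals (forced by the two separate uniform joint ergodicity hypotheses), so that disjointness identifies it as the full product measure, and conclude by Theorem \ref{criterion:JE}. Your explicit remarks on the $\Psi$-invariance of the limit point and the continuous-to-measurable approximation merely fill in details the paper leaves implicit.
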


\begin{proof}
 Our assumptions imply that $\mathsf{X}^{(1)}$ and $\mathsf{X}^{(2)}$ are ergodic and so by Lemma \ref{lem:dis:prod:erg}, $\mathsf{X}^{(1)} \times \mathsf{X}^{(2)}$ is an ergodic system.

By Theorem \ref{criterion:JE}, it is sufficient to show that for any $A_1, \dots, A_l, B_1, \dots, B_m \in \mathcal{B}$
\begin{equation}
\label{eqn:6.1}
\begin{split}
    \lim_{N-M \rightarrow \infty} \frac{1}{N - M} \sum_{n=M}^{N-1} &\mu \left( S_1^{-n} A_1 \cap \cdots \cap S_l^{-n} A_l \cap T_1^{-n} B_1 \cap \cdots \cap T_m^{-n} B_m \right)  \\
    &= \prod_{i=1}^l \mu_i(A_i) \cdot \prod_{j=1}^m \nu_j(B_j).
\end{split}
\end{equation}

Consider the family of measures $\rho_{M,N}$, $M, N \in \mathbb{N} \cup \{0\}$, $0 \leq M < N$,  defined by \begin{equation*}
    \begin{split}
         \rho_{M, N} &(A_1 \times \cdots \times A_l \times B_1 \times \cdots \times B_m) \\
&:= \frac{1}{N - M} \sum_{n=M}^{N-1} \mu \left( S_1^{-n} A_1 \cap \cdots \cap S_l^{-n} A_l \cap T_1^{-n} B_1 \cap \cdots \cap T_m^{-n} B_m \right),
    \end{split}
\end{equation*}
for any $A_1, \dots, A_l, B_1, \dots, B_m \in \mathcal{B}$.

We claim that any limit point of $\rho_{M,N}$ is $\mu_1 \times \cdots \times \mu_l \times \nu_1 \times \cdots \times \nu_m$ (which clearly will imply \eqref{eqn:6.1}).
Note that since $S_1, \dots, S_l$ are uniformly jointly ergodic and $T_1, \dots, T_m$ are uniformly jointly ergodic, we have that projections of any limit point of $\rho_{M, N}$ to $\mathsf{X}^{(1)}$ and $\mathsf{X}^{(2)}$ are $\mu_1 \times \cdots \times \mu_l$ and $\nu_1 \times \cdots \times \nu_m$. 
 Our claim now follows from the fact that $\mathsf{X}^{(1)}$ and $\mathsf{X}^{(2)}$ are disjoint. 
\end{proof}


Note that in the special case when $l=m=1$ and $\mu_1 = \mu_2 = \mu$, Theorem \ref{Thm:disjoint} implies that if $T$ and $S$ are ergodic and disjoint transformations on $(X, \mathcal{B}, \mu)$, then $T$ and $S$ are uniformly jointly ergodic.
A more general result, proved in \cite{HKS}, states that, for any $k \geq 2$, if ergodic measure preserving transformations $T_1, \dots, T_k$ on $(X, \mathcal{B}, \mu)$ are disjoint, then they are uniformly jointly ergodic. The following corollary of Theorem \ref{Thm:disjoint} provides a slight generalization of this result.

\begin{Corollary}
\label{Cor:HKS}
Let $(X, \mathcal{B}, \mu)$ be a probability space. 
Let $k \in \mathbb{N}$.
If ergodic measure preserving systems $(X, \mathcal{B}, \mu_i, T_i), 1 \leq i \leq k,$ are {\em mutually disjoint} and each $\mu_i$ is equivalent to $\mu$, then $T_1, \dots, T_k$ are uniformly jointly ergodic with respect to $(\mu; \mu_1, \cdots, \mu_k)$. 
\end{Corollary}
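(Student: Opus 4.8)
The plan is to argue by induction on $k$, peeling off one transformation at a time with the help of Theorem \ref{Thm:disjoint}. For the base case $k=1$ I would invoke von Neumann's mean ergodic theorem directly: since $T_1$ is ergodic with respect to $\mu_1$, the uniform averages $\frac{1}{N-M}\sum_{n=M}^{N-1} T_1^n f_1$ converge to $\int f_1 \, d\mu_1$ in $L^2(\mu_1)$ for every $f_1 \in L^{\infty}$. Because $\mu_1$ is equivalent to $\mu$ and these averages are bounded by $\|f_1\|_{\infty}$, Lemma \ref{Lem:Conv:Mean} upgrades the convergence to $L^2(\mu)$, which is precisely uniform joint ergodicity of the one-element family with respect to $(\mu; \mu_1)$.

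For the inductive step I would assume the statement for every family of $k-1$ mutually disjoint systems. The crucial observation, which I would isolate as a short lemma, is that mutual disjointness of $(X,\mathcal{B},\mu_i,T_i)$, $1 \le i \le k$, automatically passes to the subfamily $T_1,\dots,T_{k-1}$ and, at the same time, forces the product system $\mathsf{X}^{(1)} = (X,\mathcal{B},\mu_1,T_1)\times\cdots\times(X,\mathcal{B},\mu_{k-1},T_{k-1})$ to be disjoint from the single system $\mathsf{X}^{(2)} = (X,\mathcal{B},\mu_k,T_k)$. Both assertions rest on the same elementary marginal argument: if $\rho$ is any $T_1\times\cdots\times T_{k-1}$-invariant probability measure on $X^{k-1}$ with one-dimensional marginals $\mu_1,\dots,\mu_{k-1}$, then $\rho \times \mu_k$ is a $T_1\times\cdots\times T_k$-invariant measure on $X^k$ with marginals $\mu_1,\dots,\mu_k$, so mutual disjointness of the full family gives $\rho \times \mu_k = \mu_1\times\cdots\times\mu_k$ and, after projecting out the last coordinate, $\rho = \mu_1\times\cdots\times\mu_{k-1}$. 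The disjointness of $\mathsf{X}^{(1)}$ and $\mathsf{X}^{(2)}$ is read off in exactly the same way, since any joining of $\mathsf{X}^{(1)}$ with $\mathsf{X}^{(2)}$ whose projection onto $\mathsf{X}^{(1)}$ is $\mu_1\times\cdots\times\mu_{k-1}$ and onto $\mathsf{X}^{(2)}$ is $\mu_k$ is a $T_1\times\cdots\times T_k$-invariant measure on $X^k$ carrying all the correct one-dimensional marginals, hence equals $\mu_1\times\cdots\times\mu_k$.

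With these facts in hand I would conclude as follows. By the subfamily disjointness and the induction hypothesis, $T_1,\dots,T_{k-1}$ are uniformly jointly ergodic with respect to $(\mu;\mu_1,\dots,\mu_{k-1})$; by the base case, $T_k$ is uniformly jointly ergodic with respect to $(\mu;\mu_k)$. Applying Theorem \ref{Thm:disjoint} with $(S_1,\dots,S_l)=(T_1,\dots,T_{k-1})$, with $m=1$ and the single transformation $T_k$ as the second family, and invoking the disjointness of $\mathsf{X}^{(1)}$ and $\mathsf{X}^{(2)}$ established above, yields that $T_1,\dots,T_k$ are uniformly jointly ergodic with respect to $(\mu;\mu_1,\dots,\mu_k)$, completing the induction.

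The only genuine content beyond routine bookkeeping is the disjointness reduction of the second paragraph; everything else is a direct invocation of Theorem \ref{Thm:disjoint}, von Neumann's theorem, and Lemma \ref{Lem:Conv:Mean}. Accordingly, the main obstacle — really the sole point requiring care — is to match the hypothesis of the corollary (\emph{mutual} disjointness of the $k$ individual systems) with the hypothesis demanded by Theorem \ref{Thm:disjoint} (a joining condition between the two \emph{product} systems $\mathsf{X}^{(1)}$ and $\mathsf{X}^{(2)}$, together with joint ergodicity of each block). The marginal argument above is exactly what bridges these two formulations, and verifying it carefully is the heart of the proof.
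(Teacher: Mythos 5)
Your proof is correct and follows essentially the same route as the paper's: induction on $k$, with the base case handled by the uniform mean ergodic theorem together with Lemma \ref{Lem:Conv:Mean}, and the inductive step by applying Theorem \ref{Thm:disjoint} to the blocks $T_1,\dots,T_{k-1}$ and $T_k$. The marginal argument you isolate (that mutual disjointness of the full family yields both mutual disjointness of the subfamily and disjointness of $\mathsf{X}^{(1)}$ from $\mathsf{X}^{(2)}$) is exactly the step the paper leaves implicit in the phrase ``By assumption, $T_1\times\cdots\times T_{k-1}$ and $T_k$ are disjoint,'' and your verification of it is sound.
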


\begin{proof}
The result is trivial for $k=1$. Assume that the result in question holds for $T_1, \dots, T_{k-1}$. By assumption, $T_1 \times \cdots \times T_{k-1}$ and $T_{k}$ are disjoint. Thus, Theorem \ref{Thm:disjoint} implies that 
$T_1, \dots, T_{k}$ are uniformly jointly ergodic with respect to $(\mu; \mu_1, \cdots, \mu_k)$.
\end{proof}



The following corollary of Theorem \ref{Thm:disjoint} deals with joint ergodicity of zero entropy systems and exact systems. 
\begin{Corollary}
\label{Cor:dis}
Let $(X, \mathcal{B}, \mu)$ be a probability space.
Suppose that $S_1, \dots, S_l, T_1, \dots, T_m: X \rightarrow X$ are measurable transformations such that 
\begin{itemize}
    \item each $S_i$, $1 \leq i \leq l$, has an invariant measure $\mu_i$, which is equivalent to $\mu$, and has zero entropy,
    \item each $T_i$, $1 \leq i \leq m$, has an invariant measure $\nu_i$,  which is equivalent to $\mu$, and is exact.
\end{itemize}
If $ S_1, \dots, S_l$ are uniformly jointly ergodic with respect to $(\mu; \mu_1, \dots, \mu_l)$ and $T_1, \dots, T_m$ are uniformly jointly ergodic with respect to $(\mu; \nu_1, \dots, \nu_m)$, then $S_1, \dots, S_l, T_1, \dots, T_m$ are uniformly jointly ergodic with respect to $(\mu, \mu_1, \dots, \mu_l, \nu_1, \dots, \nu_m)$.
\end{Corollary}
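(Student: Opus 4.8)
The plan is to deduce the statement directly from Theorem~\ref{Thm:disjoint}. Writing $\mathsf{X}^{(1)} = \mathsf{X}_1^{(1)} \times \cdots \times \mathsf{X}_l^{(1)}$ for the product of the zero entropy systems $(X,\mathcal{B},\mu_i,S_i)$ and $\mathsf{X}^{(2)} = \mathsf{X}_1^{(2)} \times \cdots \times \mathsf{X}_m^{(2)}$ for the product of the exact systems $(X,\mathcal{B},\nu_j,T_j)$, Theorem~\ref{Thm:disjoint} reduces the whole corollary to a single point: it suffices to prove that $\mathsf{X}^{(1)}$ and $\mathsf{X}^{(2)}$ are disjoint. (Note that the hypotheses, via Theorem~\ref{criterion:JE}(iii)(a), already guarantee that both product systems are ergodic, which is what the proof of Theorem~\ref{Thm:disjoint} uses.)

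First I would record the entropy information on both sides. Since Kolmogorov--Sinai entropy is additive over products, $h(\mathsf{X}^{(1)}) = \sum_{i=1}^l h(S_i) = 0$, so $\mathsf{X}^{(1)}$ is a zero entropy system; in particular it is invertible (by the Remark on zero entropy systems on Lebesgue spaces). On the other side, each $T_j$ is exact and hence has completely positive entropy (Rokhlin), equivalently its natural extension $\tilde{T}_j$ is a $K$-automorphism. Because the natural extension of a finite product is the product of the natural extensions, and a finite product of $K$-automorphisms is again a $K$-automorphism, the natural extension $\tilde{\mathsf{X}}^{(2)} = \tilde{T}_1 \times \cdots \times \tilde{T}_m$ is a $K$-automorphism.

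The core of the argument is the classical theorem of Furstenberg \cite{Fur} that every zero entropy system is disjoint from every $K$-system. Applying it to the zero entropy system $\mathsf{X}^{(1)}$ and the $K$-automorphism $\tilde{\mathsf{X}}^{(2)}$ gives $\mathsf{X}^{(1)} \perp \tilde{\mathsf{X}}^{(2)}$. To transfer this to $\mathsf{X}^{(2)}$ itself I would invoke the elementary principle that disjointness is inherited by factors: given any joining of $\mathsf{X}^{(1)}$ and $\mathsf{X}^{(2)}$, form its relatively independent extension over $\mathsf{X}^{(2)}$ to obtain a joining of $\mathsf{X}^{(1)}$ and $\tilde{\mathsf{X}}^{(2)}$, which must be the product measure, and this forces the original joining to be the product measure as well. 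Since $\mathsf{X}^{(2)}$ is a factor of $\tilde{\mathsf{X}}^{(2)}$, we conclude $\mathsf{X}^{(1)} \perp \mathsf{X}^{(2)}$, and Theorem~\ref{Thm:disjoint} finishes the proof.

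The hard part will be the bookkeeping around non-invertibility: the classical zero-entropy-versus-$K$ disjointness theorem is stated for automorphisms, so the essential maneuver is to pass to the natural extension $\tilde{\mathsf{X}}^{(2)}$ and then descend via the factor-disjointness principle. A secondary point that should be cited rather than glossed over is the stability of the $K$-property (completely positive entropy) under finite products; both facts are standard, but the argument rests on deploying them in the correct order.
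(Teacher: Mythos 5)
Your proposal is correct and follows essentially the same route as the paper's own proof: reduce to disjointness of the two product systems via Theorem~\ref{Thm:disjoint}, note that the product of the $S_i$ has zero entropy while the product of the $T_j$ is a factor of a product of natural extensions (hence of a $K$-automorphism), and then combine the zero-entropy-versus-$K$ disjointness theorem with the fact that disjointness passes to factors. The only difference is that you spell out the factor-disjointness step via relatively independent extensions, which the paper simply cites from Furstenberg.
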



\begin{proof}
Since any product of zero entropy systems has zero entropy, $S_1 \times \cdots \times S_l$ has zero entropy. 

Note that natural extensions of exact transformations are K-automorphisms (\cite{Rok}) and   products of K-automorphism are K-automorphisms. Thus, $T_1 \times \cdots \times T_m$ is a factor of a K-automorphism.  

It is well known that K-automorphisms are disjoint from zero entropy systems (see, for instance, Theorem 2 in \cite{delaRue}). Also, if $\mathsf{X}$ and $\mathsf{Y}$ are disjoint and $\mathsf{Z}$ is a factor of $\mathsf{X}$, then $\mathsf{Z}$ and $\mathsf{Y}$ are disjoint (Proposition I.1 in \cite{Fur}). Thus, $S_1 \times \cdots \times S_l$ and $T_1 \times \cdots \times T_m$ are disjoint. Corollary \ref{Cor:dis} follows now from Theorem \ref{Thm:disjoint}. 
\end{proof}

The following example is an immediate consequence of Corollary \ref{Cor:dis}, since $T_G$ and $T_{\beta}$ are exact.
\begin{ex}
\label{ex:notThm4.1}
Let $\mu_0$ be a probability measure on $[0,1]$, which is equivalent to Lebesgue measure $\lambda$. 
If a $\mu_0$-preserving transformation $S: [0,1] \rightarrow [0,1]$ is ergodic and has zero entropy, 
then for any bounded measurable functions $f_0, f_1, f_{2,1}, \dots, f_{2,s}$, 
\begin{equation}
\label{eq:ex5.5ext}
\begin{split} 
\begin{split} 
\lim\limits_{N -M \rightarrow \infty} \frac{1}{N -M } \sum\limits_{n=M}^{N-1} 
&  f_{0} (S^n x) \cdot f_1 (T_G^n x) \cdot \prod_{i=1}^s f_{2,i}(T_{\beta_i}^n x) \\
 &=  \int f_{0}  \, d \mu_0 \cdot  \int f_1 \, d \mu_G \cdot \prod_{i=1}^s \int f_{2,i} \,  d \mu_{\beta_i} \,\, \text{in } L^2(\lambda),
\end{split}
\end{split}
\end{equation}
where $\beta_1 , \cdots , \beta_t$ are distinct real numbers with $\beta_i >1$ and $\log \beta_i \ne \frac{\pi^2}{6 \log 2}$. 
\end{ex}

\begin{Remark}
\label{Rem:disjoint:ex}
 Note that the transformation $S$, which appears in Example \ref{ex:notThm4.1}, may not belong to $\mathcal{T}$. Hence Example \ref{ex:notThm4.1} is not covered by Theorem \ref{thm:L^2_joint}.
\end{Remark}

It is easy to see that the term $f_0(S^n x)$ appearing in the right hand side of \eqref{eq:ex5.5ext} can be replaced by more general expression involving any finite family of ergodic mutually disjoint transformations having zero entropy. For example, 
 let $S_0: [0,1] \rightarrow [0,1]$ be von Neumann - Kakutani transformation (see \eqref{form:VK}), let $S_i x = (x + \alpha_i) \, \bmod 1, (1 \leq i \leq l)$, where $1, \alpha_1, \dots, \alpha_l$ are rationally independent, and let $S_{l+1}:[0,1] \rightarrow [0,1]$ be Chacon's transformation (see the Appendix).
Then, Corollary \ref{Cor:dis} implies that for any bounded measurable functions $f_{0}, \dots, f_{l+1}, g, h_1, \dots h_t$, 
\begin{equation}
\label{eq8.4}
\begin{split} 
\lim\limits_{N -M \rightarrow \infty} \frac{1}{N -M } \sum\limits_{n=M}^{N-1} 
&  \prod_{i=0}^{l+1} f_{i} (S_i^n x) \cdot g(T_G^n x) \cdot \prod_{i=1}^t h_{i}(T_{\beta_i}^n x) \\
 &= \prod_{i=0}^{l+1} \int f_{i} \, d \lambda \cdot  \int g \, d \mu_G \cdot \prod_{i=1}^t \int h_{i} \, d \mu_{\beta_i} \,\, \text{in } L^2(\lambda).
\end{split} 
\end{equation}

\begin{Remark}
\label{Rem:ex6.2}
Berend  proved in \cite{Be} that if commuting, invertible measure preserving transformations $T_1, \dots, T_k$ on a probability space $(X, \mathcal{B}, \mu)$ are jointly ergodic, then each transformation $T_i$ is totally ergodic for $i =1, 2, \dots, k$ and, moreover, $T_1, \dots, T_k$ are totally jointly ergodic, that is, $T_1^m, \dots T_k^m$ is jointly ergodic for any $m \in \mathbb{N}$. If we drop the condition of commutativity, then total joint ergodicity may not hold.
For instance, if $T_1$ is von Neumann - Kakutani transformation and $T_2$ is an irrational rotation (note that $T_1$ and $T_2$ do not commute), then by \eqref{eq8.4} $T_1$ and $T_2$ are jointly ergodic, yet $T_1$ is not totally ergodic. 
\end{Remark} 

In this paper we mainly deal with transformations on the unit interval. However, Theorem \ref{Thm:disjoint} has a wider range of applications. We illustrate this by the following proposition.
\begin{prop}
\label{ex:dijoint:dim2}
Consider the mutually disjoint skew-product transformations on $\mathbb{T}^2$ introduced in Example \ref{ex:skewprod}
$$S_i(x,y) = (x+ \alpha_i, y + a_i x + \beta_i ), 1 \leq i \leq l,$$
where $1, \alpha_1, \dots, \alpha_l$ are rationally independent. 
Let $T_1, \dots, T_m \in SL(2, \mathbb{Z})$ be hyperbolic automorphisms $\mathbb{T}^2$ with $h(T_1) < h(T_2) < \cdots < h(T_m)$.  
Then $S_1, \dots, S_l, T_1, \dots, T_m$ are uniformly jointly ergodic on $(\mathbb{T}^2, \mathcal{B}, \lambda)$.  
\end{prop}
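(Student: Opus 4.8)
The plan is to split the family into the two groups that appear in Theorem~\ref{Thm:disjoint}: the skew products $S_1,\dots,S_l$, which have zero entropy, and the automorphisms $T_1,\dots,T_m$, which are $K$-automorphisms. I would verify uniform joint ergodicity within each group separately, and then combine the two via disjointness of the corresponding product systems. For the first group, Corollary~\ref{ex:skewprod} shows that $S_1,\dots,S_l$ are mutually disjoint, and each $S_i$ is ergodic and preserves $\lambda$; hence Corollary~\ref{Cor:HKS} applies directly and gives that $S_1,\dots,S_l$ are uniformly jointly ergodic with respect to $(\lambda;\lambda,\dots,\lambda)$. This furnishes the first hypothesis of Theorem~\ref{Thm:disjoint}.

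The bulk of the work is to show that $T_1,\dots,T_m$ are uniformly jointly ergodic with respect to $(\lambda;\lambda,\dots,\lambda)$. Writing $T_i x = A_i x \bmod 1$ with $A_i\in SL(2,\mathbb{Z})$ hyperbolic, each $T_i$ is mixing, so $T_1\times\cdots\times T_m$ is ergodic by Lemma~\ref{prop2.1} and condition (ii)(a) of Theorem~\ref{JE:BeBe} holds. For condition (ii)(b) it suffices, by a standard approximation argument (using uniformly bounded Fej\'er approximations of $L^{\infty}$ functions), to verify
$$\lim_{N-M\to\infty}\frac{1}{N-M}\sum_{n=M}^{N-1}\int_{\mathbb{T}^2}\prod_{i=1}^m f_i(T_i^n x)\,d\lambda(x)=\prod_{i=1}^m\int f_i\,d\lambda$$
for characters $f_i(x)=e^{2\pi i\langle k_i,x\rangle}$, $k_i\in\mathbb{Z}^2$. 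Since $f_i(T_i^n x)=e^{2\pi i\langle (A_i^{\mathsf{T}})^n k_i,\,x\rangle}$, the integral equals $1$ when $v_n:=\sum_{i=1}^m (A_i^{\mathsf{T}})^n k_i=0$ and $0$ otherwise, while the right-hand side equals $1$ exactly when all $k_i=0$. When all $k_i=0$ both sides equal $1$; when some $k_i\neq 0$, the claim is that the set $E=\{n\ge 0: v_n=0\}$ is finite, which forces the left-hand average to tend to $0$ as $N-M\to\infty$. Indeed, the eigenvalues $\lambda_i,\lambda_i^{-1}$ of $A_i^{\mathsf{T}}$ are irrational, so no nonzero vector in $\mathbb{Z}^2$ lies on an eigenline; hence for $k_i\neq0$ one has $|(A_i^{\mathsf{T}})^n k_i|\asymp|\lambda_i|^n$. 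Because the entropies $h(T_i)=\log|\lambda_i|$ are distinct, the term of largest $|\lambda_i|$ among indices with $k_i\neq 0$ strictly dominates, so $|v_n|\to\infty$ and $E$ is finite. Theorem~\ref{JE:BeBe} then yields uniform joint ergodicity of $T_1,\dots,T_m$.

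Finally I would establish disjointness of the two product systems and conclude. The system $\mathsf{X}^{(1)}=S_1\times\cdots\times S_l$ has zero entropy (each $S_i$ is distal, as noted in the proof of Corollary~\ref{ex:skewprod}, hence of zero entropy, and zero entropy is preserved under products), while $\mathsf{X}^{(2)}=T_1\times\cdots\times T_m$ is a $K$-automorphism, being a product of $K$-automorphisms. Since $K$-automorphisms are disjoint from zero entropy systems (the fact used in the proof of Corollary~\ref{Cor:dis}), $\mathsf{X}^{(1)}$ and $\mathsf{X}^{(2)}$ are disjoint. Theorem~\ref{Thm:disjoint}, applied with all invariant measures equal to $\lambda$, then shows that $S_1,\dots,S_l,T_1,\dots,T_m$ are uniformly jointly ergodic on $(\mathbb{T}^2,\mathcal{B},\lambda)$. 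The main obstacle is the middle step: one must rule out persistent resonances $v_n=0$ for the automorphism group, and this is precisely where the hypothesis of distinct entropies enters, through the strict domination of the growth rates $|\lambda_i|^n$.
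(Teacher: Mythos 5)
Your proof is correct, and its overall architecture coincides with the paper's: handle the zero-entropy group $S_1,\dots,S_l$ via Corollary~\ref{ex:skewprod} and Corollary~\ref{Cor:HKS}, handle the automorphisms separately, and glue the two groups together with Theorem~\ref{Thm:disjoint} using the disjointness of $K$-systems from zero-entropy systems. The one place where you genuinely diverge is the treatment of $T_1,\dots,T_m$. The paper simply cites Proposition~2.10 of \cite{BergGor}, which asserts that hyperbolic automorphisms of $\mathbb{T}^2$ with distinct entropies are jointly \emph{mixing}, and then downgrades to uniform joint ergodicity via Remark~\ref{rem:mixje}. You instead verify condition (ii) of Theorem~\ref{JE:BeBe} directly: product ergodicity from mixing of each $T_i$ together with Lemma~\ref{prop2.1}, and the averaged integral identity by a character computation, using that no nonzero integer vector lies on an eigenline of a hyperbolic $A_i^{\mathsf T}$ (irrational eigenvalues) so that $|(A_i^{\mathsf T})^n k_i|\asymp|\lambda_i|^n$, and that distinct entropies force the dominant term in $v_n=\sum_i (A_i^{\mathsf T})^n k_i$ to win, so $v_n=0$ only finitely often. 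This argument is sound (the $L^1$-approximation by Fej\'er means is legitimate here because each $T_i$ preserves $\lambda$ and the multilinear functional is continuous in each entry in $L^1$ when the others are bounded in $L^\infty$), and it has the virtue of being self-contained; what it gives up relative to the paper's citation is the stronger conclusion that $T_1,\dots,T_m$ are jointly mixing, which is not needed for the proposition. Note that your character computation in fact shows the integrals are eventually zero, so with marginally more effort you would recover joint mixing of the $T_i$ as well.
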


\begin{proof}
Note first that by Corollary \ref{Cor:HKS}, $S_1, \dots, S_l$ uniformly jointly ergodic. Moreover, $h(S_i) = 0$ for all $i = 1, 2, \dots, l$ (This follows, for example, from the fact that the transformations $S_i$ are distal. See \cite{Pa3}). 

Now, it follows from \cite{BergGor}, Proposition 2.10, that if hyperbolic $T_1, \dots, T_m \in SL(2, \mathbb{Z})$ have distinct entropy, then they are jointly mixing, hence uniformly jointly ergodic on $(\mathbb{T}^2, \mathcal{B}, \lambda)$. (See Remark \ref{rem:mixje}.) 

Note also that $T_i$ are Bernoulli (see \cite{Katz}), so $T_1 \times \cdots \times T_m$ is a Bernoulli transformation, which is disjoint from the zero entropy transformation $S_1 \times \cdots \times S_l$. Now the result in question follows from Theorem \ref{Thm:disjoint}.
\end{proof}

\section{Miscellany}
\label{sec:final remark}

The goal of this section is to provide some results and formulate open questions which are related to possible extensions of results presented in this paper. 

\subsection{Piecewise monotone maps not in $\mathcal{T}$.}
\mbox{}

Most of theorems in this paper involve (and apply to) transformations belonging to the class $\mathcal{T}$.  
By the definition of the class $\mathcal{T}$, if $T$ belongs to $\mathcal{T}$, then there exist a $T$-invariant measure $\mu$ which is ``strongly equivalent" to Lebesgue measure in the following sense: 
\begin{equation}
\label{strongequiv}
\frac{1}{C} \leq \frac{d \mu}{ d \lambda} \leq C \quad \text{for some } C \geq 1.
\end{equation}

However, there are some interesting piecewise monotone maps which have an invariant measure that is equivalent to Lebesgue measure, but does not satisfy \eqref{strongequiv}. Consider, for example,  Ulam - von Neumann map $T: [0,1] \rightarrow [0,1]$, which is defined by
\[ Tx = 4x (1-x).\]
Note that $T$ is $\mu$-preserving, where $d \mu = \frac{1}{\pi \sqrt{x(1-x)}} dx$.
Define the transformation $S: [0,1] \rightarrow [0,1]$ by
\[S x =     9x - 24 x^2 + 16x^3. \]
It is not hard to check that $S$ is $\mu$-preserving and, moreover, $T$ and $S$ are isomorphic to $\tilde{T} x =2x \, \bmod 1$ and $\tilde{S} x= 3x \, \bmod 1$ respectively via the isomorphism $\psi(x) = \sin^2 \frac{\pi x}{2}$. So we have the following result. 
\begin{Theorem}
$T$ and $S$ are uniformly jointly ergodic: for any $f_1, f_2 \in L^{\infty}$, 
\begin{equation}
\frac{1}{N} \sum_{n=0}^{N-1} f_1 (T^n x) f_2(S^n x) \rightarrow \int f_1 d \mu \int f_2 d \mu \quad \text{in } L^2.
\end{equation}
\end{Theorem}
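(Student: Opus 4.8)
The plan is to transport the problem through the stated isomorphism $\psi(x)=\sin^2\frac{\pi x}{2}$ to the pair of times-$b$ maps $\tilde Tx = 2x \bmod 1$ and $\tilde Sx = 3x \bmod 1$, for which uniform joint ergodicity is already covered by Theorem \ref{thm:L^2_joint}. First I would record the two facts about $\psi$ that make this work. On one hand, the substitution $u=\psi(x)$ gives $du = \psi'(x)\,dx = \frac{\pi}{2}\sin(\pi x)\,dx = \pi\sqrt{u(1-u)}\,dx$, so $\psi_{*}\lambda=\mu$ and consequently $\int (f\circ\psi)\,d\lambda = \int f\,d\mu$ for every $f\in L^{\infty}$. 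On the other hand, the isomorphism asserted in the statement means $T\circ\psi = \psi\circ\tilde T$ and $S\circ\psi = \psi\circ\tilde S$, whence $T^n\circ\psi = \psi\circ\tilde T^n$ and $S^n\circ\psi = \psi\circ\tilde S^n$ for all $n$.

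Next I would verify that $\tilde T$ and $\tilde S$ are uniformly jointly ergodic with respect to $(\lambda;\lambda,\lambda)$ by applying Theorem \ref{thm:L^2_joint} with $k=1$, $T_0=\tilde T=T_2$ and $T_1=\tilde S=T_3$. Both maps belong to $\mathcal{T}$ together with their natural partitions $\mathcal{A}_2,\mathcal{A}_3$; both are weakly mixing RU maps and hence $\alpha$-mixing by Theorem \ref{Thm:AaNa}, so in particular both satisfy property B (needed only for $T_1$). Their entropies $h(T_0)=\log 2$ and $h(T_1)=\log 3$ are distinct with $h(T_0)<h(T_1)$, so condition (3) holds; and since $T_2,T_3$ are mixing, Lemma \ref{prop2.1} shows that $T_2\times T_3$ is ergodic on $([0,1]^2,\lambda\times\lambda)$. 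Thus Theorem \ref{thm:L^2_joint} gives, for all $g_1,g_2\in L^{\infty}$,
\[
\lim_{N-M\to\infty}\frac{1}{N-M}\sum_{n=M}^{N-1} g_1(\tilde T^n y)\,g_2(\tilde S^n y) = \int g_1\,d\lambda\int g_2\,d\lambda \quad\text{in } L^2(\lambda).
\]

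Finally I would transfer this back. Fix $f_1,f_2\in L^{\infty}$ (recall $L^{\infty}(\mu)=L^{\infty}(\lambda)$ since $\mu\sim\lambda$) and put $g_i=f_i\circ\psi$. Writing $G_N(x)=\frac{1}{N-M}\sum_{n=M}^{N-1} f_1(T^n x)\,f_2(S^n x)$, the intertwining relations give $G_N(\psi(y))=\frac{1}{N-M}\sum_{n=M}^{N-1} g_1(\tilde T^n y)\,g_2(\tilde S^n y)$, and $\psi_{*}\lambda=\mu$ converts the $L^2(\lambda)$-convergence above into $L^2(\mu)$-convergence of $G_N$: with $c=\int f_1\,d\mu\int f_2\,d\mu = \int g_1\,d\lambda\int g_2\,d\lambda$ one has
\[
\int |G_N(x)-c|^2\,d\mu(x) = \int |G_N(\psi(y))-c|^2\,d\lambda(y) \longrightarrow 0.
\]
This is precisely uniform joint ergodicity of $T,S$ with respect to $(\mu;\mu,\mu)$, which in particular yields the asserted Cesàro convergence (take $M=0$). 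I expect the only genuinely delicate point to be the bookkeeping around the isomorphism, namely that the single map $\psi$ simultaneously conjugates $\tilde T$ to $T$ and $\tilde S$ to $S$ while pushing $\lambda$ to $\mu$; once this is in hand, everything reduces to the already-established Theorem \ref{thm:L^2_joint}, with no further estimates required.
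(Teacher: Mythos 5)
Your strategy is exactly the paper's (the paper offers no more proof than the sentence preceding the theorem: assert the isomorphism with $\tilde Tx=2x\bmod 1$, $\tilde Sx=3x\bmod 1$ and transfer joint ergodicity, which for $T_2,T_3$ follows from Theorem \ref{thm:L^2_joint} since $\log 2\ne\log 3$). Your verification that $\psi_*\lambda=\mu$ and your application of Theorem \ref{thm:L^2_joint} to $T_2,T_3$ are both correct.

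However, the step you yourself single out as ``the only genuinely delicate point'' actually fails as written: with $\psi(x)=\sin^2\frac{\pi x}{2}$ the identity $T\circ\psi=\psi\circ\tilde T$ does \emph{not} hold. Indeed $T(\psi(x))=4\sin^2\frac{\pi x}{2}\cos^2\frac{\pi x}{2}=\sin^2(\pi x)$, while for $x\in[\tfrac12,1)$ one has $\psi(2x-1)=\sin^2(\pi x-\tfrac{\pi}{2})=\cos^2(\pi x)$; so the intertwining fails on half the interval ($\psi$ conjugates the \emph{tent} map, not the doubling map, to $4x(1-x)$), and the same problem occurs for $S$. The repair is to replace $\psi$ by $h(x)=\sin^2(\pi x)$: using $\sin(2\theta)=2\sin\theta\cos\theta$ and $\sin(3\theta)=\sin\theta(3-4\sin^2\theta)$ together with the $\pi$-periodicity of $\sin^2$, one checks $h(2x\bmod 1)=T(h(x))$ and $h(3x\bmod 1)=S(h(x))$ everywhere, and a change of variables over the two branches of $h$ gives $h_*\lambda=\mu$. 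The map $h$ is two-to-one, hence a factor map rather than an isomorphism, but that costs nothing in your argument: you only pull functions back, and $\int|G_N\circ h-c|^2\,d\lambda=\int|G_N-c|^2\,d\mu$ still converts the $L^2(\lambda)$-convergence for $\tilde T,\tilde S$ into the desired $L^2(\mu)$-convergence for $T,S$. With this substitution your proof is complete; the defect originates in the paper's own loose phrasing of the ``isomorphism,'' but since you took the identities at face value rather than checking them, the proof as submitted contains a false step.
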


The above discussion motivates the following question.
\begin{Question}
Can Theorem \ref{thm:L^2_joint} be extended to the class $\tilde{\mathcal{T}}$, which is introduced in the following definition?
\end{Question} 
\begin{Definition}
Let $(X, \mathcal{B}, \mu, T)$ be an ergodic measure preserving system such that $\mu$ is equivalent to Lebesgue measure $\lambda$ and let $\mathcal{A}$ be a partition of $[0,1]$. We will say that $(T, \mu, \mathcal{A}) \in \tilde{\mathcal{T}}$ if 
\begin{enumerate}
\item 
for any interval $I \in \mathcal{A}$, $T|_{\text{int}(I)}$ is continuous and strictly monotone,
\item  partition $\mathcal{A}$ generates the $\sigma$-algebra $\mathcal{B}$, meaning that $\mathcal{B} = \bigvee_{j=0}^{\infty} T^{-j} \sigma( \mathcal{A}) \mod \lambda$, where $\sigma (\mathcal{A})$ denotes the sub-$\sigma$-algebra generated by $\mathcal{A}$,
\item entropy of the partition $\mathcal{A}$ is finite: $H(\mathcal{A}) = - \sum \mu(A_i) \log \mu (A_i) < \infty$. 
\end{enumerate}
\end{Definition}

\subsection{Joint ergodicity of transformations on $\mathbb{T}^d \, (d \geq 2)$}
\mbox{}

In this article, we mainly discussed transformations on the unit interval. It is certainly of interest to obtain more general versions of theorems proved in Sections \ref{sec:jointergodicityPM} and \ref{sec:jointmixingPM}.
The following examples demonstrate some new phenomena which one meets when dealing with joint ergodicity on tori of higher dimension. 

\begin{ex}
Consider the following endomorphisms of $\mathbb{T}^2$: 
\[
S_1 = 
\begin{bmatrix}
2 & 0 \\
0 & 3 
\end{bmatrix}
\quad 
S_2 = 
\begin{bmatrix}
3 & 0 \\
0 & 2 
\end{bmatrix}
\quad 
S_3 = 
\begin{bmatrix}
2 & 0 \\
0 & 4 
\end{bmatrix}
\]

Clearly  $h(S_1) = h(S_2)$. Notice that $S_1 = T_2 \times T_3$ and $S_2 = T_3 \times T_2$, where $T_2$ is times $2$ map and $T_3$ is times $3$ map. It is not hard to verify $S_1$ and $S_2$ are uniformly jointly ergodic (indeed jointly mixing).
On the other hand, $S_1$ and $S_3$ have different entropies but they are not jointly ergodic since the projections of $S_1$ and $S_3$ to the first coordinate coincide. 
\end{ex}

\begin{ex}
Consider now the following automorphisms of $\mathbb{T}^2$:
\[
S_1 = 
\begin{bmatrix}
2 & 1 \\
1 & 1 
\end{bmatrix}
\quad 
S_2 = 
\begin{bmatrix}
1 & 1 \\
1 & 2 
\end{bmatrix}
\quad 
S_3 = 
\begin{bmatrix}
3 & 1 \\
-1 & 0 
\end{bmatrix}
\]
Note that $h(S_1) = h(S_2) = h(S_3)$. It follows from \cite[Proposition 2.10]{BergGor},  that $S_1, S_2, S_3$ are not jointly ergodic, while any two $S_i, S_j, i \ne j,$ are jointly mixing.
\end{ex}

\begin{ex}
Recall that the Baker map and its inverse are defined by the formulas
$$T_B(x,y) = 
\begin{cases} 
      (2x, y/2), & 0 \leq x < 1/2,  \\
      (2x-1, y/2 + 1/2), &  1/2 \leq x <1,
   \end{cases}
$$  
and
$$T_{B}^{-1}(x,y) = 
\begin{cases} 
      (x/2, 2y), & 0 \leq y < 1/2,  \\
      (1/2+ x/2, 2y-1), &  1/2 \leq y <1.
   \end{cases}
$$
One can check that 
\begin{enumerate}
\item $T_B$ and $T_B^{-1}$ are uniformly jointly ergodic (indeed, jointly mixing) since $T_B$ is mixing of all orders. However, $h(T_B) = h(T_B^{-1}) = \log 2$.

\item $T_B$ and $T_2 \times T_2$ have the same actions on the functions of the form $f(x,y) = g(x)$, and so they are NOT jointly ergodic, although they have different entropies.
\end{enumerate}
\end{ex}

\subsection{Sequential version of joint mixing}
\mbox{}

In this subsection we formulate a natural sequential version of joint mixing (Theorem \ref{Thm:JM2:new}) which leads to an interesting application and to a natural question. 
The proof is analogous to that of Theorem \ref{Thm:JM2:new} and is omitted. 

\begin{Theorem}
\label{JM:MainThm}
For $i = 0, 1, 2, \dots, k$, let $\mu_i$ be a probability measure on the measurable space $([0,1], \mathcal{B})$ and let  $T_i:[0,1] \rightarrow [0,1]$ be $\mu_i$-preserving ergodic transformations. 
Suppose that
\begin{enumerate}
\item for $i =0, 1, 2, \dots, k$, there is a partition $\mathcal{A}_i$ with  $(T_i, \mu_i, \mathcal{A}_i) \in \mathcal{T}$,
\item for $i = 1, 2, \dots, k$, $(T_i, \mu_i, \mathcal{A}_i)$ is $\alpha$-mixing.
\end{enumerate} 
Let $h(T_i)$ be entropy of $T_i$ and let $(a_n^{(i)})$ be an increasing sequence of natural numbers. Suppose that $h(T_i) < \infty$ for all $i$ and assume that there is $\epsilon_0 >0$ such that for $i = 0, 1, 2, \dots, k-1$,
\begin{equation}
\label{jmcond}
\lim_{n \rightarrow \infty} (a_n^{(i+1)} (h(T_{i+1}) - \epsilon_0) - a_n^{(i)} ( h(T_i) + \epsilon_0)) = \infty.
\end{equation} 
If $T_0$ is mixing, then for any $B, A_0, A_1, \dots, A_k \in \mathcal{B}$,
\begin{equation}
\label{proof:goal:JM2}
 \lim_{n \rightarrow \infty} \nu (B \cap T_0^{-a_n^{(0)}} A_0 \cap T_1^{-a_n^{(1)}}A_1 \cap \cdots \cap T_k^{-a_n^{(k)}}A_k) =  \nu (B) \prod_{i=0}^k \mu_i(A_i),
\end{equation}
where $\nu$ is any probability measure equivalent to the Lebesgue measure $\lambda$.
\end{Theorem}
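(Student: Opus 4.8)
The plan is to follow the proof of Theorem \ref{Thm:JM2:new} almost verbatim, the only genuinely new ingredient being the replacement of the uniform time $n$ and the fixed-fraction back-off $[\delta n]$ by the individual times $a_n^{(i)}$ and an adaptively chosen back-off $g_n$. As there, I would argue by induction on $k$, and by Lemma \ref{Lem:Conv:Weak} it suffices to prove \eqref{proof:goal:JM2} for the single measure $\nu = \mu_{q+1}$ at the step passing from $k=q$ to $k=q+1$; I would also reduce, via the standard approximation argument, to the case $B \in (\mathcal{A}_0)^l$ and $A_i \in (\mathcal{A}_i)^l$ for a fixed $l$. The base case $k=0$ is immediate: since $(a_n^{(0)})$ is an increasing sequence of naturals we have $a_n^{(0)} \to \infty$, so mixing of $T_0$ gives $\mu_0(B \cap T_0^{-a_n^{(0)}}A_0) \to \mu_0(B)\mu_0(A_0)$, and Lemma \ref{Lem:Conv:Weak} transfers this to arbitrary $\nu \sim \lambda$.

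For the inductive step the heart of the matter is, exactly as before, a set-theoretic re-tiling lemma asserting
\[
B \cap T_0^{-a_n^{(0)}}A_0 \cap \cdots \cap T_q^{-a_n^{(q)}}A_q = H(q) \cup J(q),
\]
where $H(q)$ is a union of good atoms of $T_{q+1}$ at rank $a_n^{(q+1)} - g_n$ (so $H(q) \in \sigma((\mathcal{A}_{q+1})^{a_n^{(q+1)}-g_n})$) and $\lambda(J(q)) = O(\epsilon) + o(1)$. Granting this, I would finish precisely as in Theorem \ref{Thm:JM2:new}: write $\mu_{q+1}$ of the full intersection as $\mu_{q+1}(H(q) \cap T_{q+1}^{-a_n^{(q+1)}}A_{q+1})$ up to a small error, apply the $\alpha$-mixing of $T_{q+1}$ with gap $g_n$ (incurring an error $\alpha(g_n)$), replace $\mu_{q+1}(H(q))$ by $\mu_{q+1}$ of the intersection up to $q$ up to a small error, and then invoke the inductive hypothesis together with $\alpha(g_n) \to 0$ and $\epsilon \downarrow 0$.

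The re-tiling lemma rests on separating the spatial resolution scales, and this is where the growth hypothesis \eqref{jmcond} and the choice of $g_n$ enter. Fixing $\epsilon < \epsilon_0$ and using Corollary \ref{entropy:intervals}, good atoms of $T_q$ at rank $a_n^{(q)}+l$ have length $\ge \frac{1}{c} e^{-(a_n^{(q)}+l)(h_q+\epsilon)}$, while good atoms of $T_{q+1}$ at rank $a_n^{(q+1)}-g_n$ have length $\le c\, e^{-(a_n^{(q+1)}-g_n)(h_{q+1}-\epsilon)}$; a clean tiling with at most two boundary atoms per $T_q$-atom is possible as soon as the latter is much smaller than the former, i.e. once
\[
\Delta_n^{(q)} := (a_n^{(q+1)}-g_n)(h_{q+1}-\epsilon) - (a_n^{(q)}+l)(h_q+\epsilon) \to +\infty,
\]
and, bounding the number of $T_q$-atoms by $c\,e^{(a_n^{(q)}+l)(h_q+\epsilon)}$, the total boundary contribution is then $O(e^{-\Delta_n^{(q)}})$. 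Setting $D_n := \min_{0 \le q \le k-1}\big(a_n^{(q+1)}(h_{q+1}-\epsilon_0) - a_n^{(q)}(h_q+\epsilon_0)\big)$, hypothesis \eqref{jmcond} gives $D_n \to \infty$, so I would fix once and for all a gap $g_n := \lceil \sqrt{D_n}\,\rceil$, which satisfies both $g_n \to \infty$ (so that $\alpha(g_n) \to 0$) and $g_n = o(D_n)$; the latter, together with $\epsilon < \epsilon_0$, yields $\Delta_n^{(q)} \to \infty$ for every $q$ and also $a_n^{(q+1)}-g_n \to \infty$, so the equipartition estimate of Corollary \ref{entropy:intervals} applies at rank $a_n^{(q+1)}-g_n$.

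The main obstacle, and really the only point not transcribed verbatim from the proof of Theorem \ref{Thm:JM2:new}, is exactly this simultaneous control of the gap $g_n$: in the equal-time case $a_n^{(i)} \equiv n$ both the resolution margin $\sim n(h_{q+1}-h_q)$ and the back-off $[\delta n]$ scale linearly in $n$, so a single small constant $\delta$ works, whereas here the differences $D_n$ may tend to infinity arbitrarily slowly, forcing the back-off to be chosen adaptively (e.g. $g_n = \lceil \sqrt{D_n}\,\rceil$) so as to remain negligible against $D_n$ yet still divergent. Once $g_n$ is fixed in this way, every estimate in the proof of Theorem \ref{Thm:JM2:new} carries over with $n$ replaced by the relevant $a_n^{(i)}$ and $[\delta n]$ replaced by $g_n$.
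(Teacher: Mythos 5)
Your proposal is correct and follows exactly the route the paper intends: the paper states that the proof is analogous to that of Theorem \ref{Thm:JM2:new} and omits it, and your argument is precisely that analogy, with the single genuinely new point---the replacement of the fixed-fraction back-off $[\delta n]$ by an adaptive gap $g_n$ that diverges yet is $o(D_n)$---identified and resolved correctly. Your choice $g_n=\lceil\sqrt{D_n}\,\rceil$ does guarantee both $\alpha(g_n)\to 0$ and $\Delta_n^{(q)}\to\infty$ (using $\epsilon<\epsilon_0$ and $D_n\le a_n^{(q+1)}h_{q+1}$ to get $a_n^{(q+1)}-g_n\to\infty$), so the re-tiling and the rest of the estimates carry over as you claim.
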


 Note that in the following Corollary, we do not require that $\beta_1, \dots, \beta_k$ are distinct and $\log \beta_i \ne \frac{\pi^2}{6 \log 2}$. 
\begin{Corollary}
Let $(a_n^{(i)})$ be an increasing sequence of natural numbers for $i = 0, 1, \dots, k$, such that $\lim\limits_{n \rightarrow \infty} \frac{a_n^{(i+1)}}{a_n^{(i)}} = \infty$ for $i = 0, 1, 2, \dots, k-1$. 
Let $\beta_1 , \cdots , \beta_k$ be real numbers with $\beta_i >1$. 
If  $\nu$ is any probability measure equivalent to the Lebesgue measure $\lambda$, then for any measurable sets $ B, A_0, A_1 \dots, A_k$, 
\begin{equation*}
\lim\limits_{n \rightarrow \infty} \nu (B \cap T_G^{-a_n^{(0)}} A_0 \cap T_{\beta_1}^{-a_n^{(1)}} A_1 \cap \cdots \cap T_{\beta_k}^{-a_n^{(k)}} A_k) = \nu (B) \, \mu_G (A_0) \, \prod_{i=1}^k \mu_{\beta_i} (A_i), 
\end{equation*}
where $T_G (x) = \frac{1}{x} \, (\bmod \, 1) $ is Gauss map and $T_{\beta_i} (x) = \beta_i x \, (\bmod \, 1)$ is $\beta$-transformation for $1 \leq i \leq k$.
\end{Corollary}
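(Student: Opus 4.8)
The plan is to derive this corollary as a direct application of Theorem \ref{JM:MainThm}, assigning the roles of the transformations by setting $T_0 = T_G$ and $T_i = T_{\beta_i}$ for $1 \leq i \leq k$, with invariant measures $\mu_0 = \mu_G$ and $\mu_i = \mu_{\beta_i}$. First I would recall from Section \ref{sec:examples} that the Gauss map $(T_G, \mu_G, \mathcal{A}_G)$ and each $\beta$-transformation $(T_{\beta_i}, \mu_{\beta_i}, \mathcal{A}_{\beta_i})$ belong to the class $\mathcal{T}$, that each $T_{\beta_i}$ is $\alpha$-mixing, and that $T_G$ is (strongly) mixing. This immediately verifies hypotheses (1) and (2) of Theorem \ref{JM:MainThm} as well as the requirement that $T_0$ be mixing.

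The crux of the argument is to verify the entropy--growth condition \eqref{jmcond}. Here I would use the explicit entropy values $h(T_G) = \frac{\pi^2}{6 \log 2}$ and $h(T_{\beta_i}) = \log \beta_i$; since $\beta_i > 1$, every one of these quantities is strictly positive and finite. Writing $h_i$ for the entropy of the $i$-th transformation, I would choose $\epsilon_0 > 0$ small enough that $h_i - \epsilon_0 > 0$ for all $i = 0, 1, \dots, k$, which is possible precisely because all the entropies are positive. With this choice, for each $i$ I would factor
\[
a_n^{(i+1)}\bigl(h_{i+1} - \epsilon_0\bigr) - a_n^{(i)}\bigl(h_i + \epsilon_0\bigr)
= a_n^{(i)}\left[\frac{a_n^{(i+1)}}{a_n^{(i)}}\bigl(h_{i+1} - \epsilon_0\bigr) - \bigl(h_i + \epsilon_0\bigr)\right].
\]
Since the hypothesis $\lim_{n \to \infty} a_n^{(i+1)}/a_n^{(i)} = \infty$ together with $h_{i+1} - \epsilon_0 > 0$ forces the bracketed factor to tend to $+\infty$, and since $a_n^{(i)} \to \infty$ (being an increasing sequence of natural numbers), the product tends to $+\infty$. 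This establishes \eqref{jmcond}, and Theorem \ref{JM:MainThm} then yields the desired limit formula for every probability measure $\nu$ equivalent to $\lambda$.

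The only conceptual point worth emphasizing --- and the reason the corollary can dispense with the hypotheses that the $\beta_i$ be distinct and that $\log \beta_i \ne \frac{\pi^2}{6 \log 2}$, which were needed in the non-sequential Theorem \ref{Int:Thm:JM2:new} --- is that the entropy separation demanded by \eqref{jmcond} is supplied here by the rapid growth of the sampling sequences rather than by distinctness of the entropies themselves. Consequently there is no genuine obstacle in the proof: the whole argument reduces to checking the hypotheses of Theorem \ref{JM:MainThm}, with the one-line growth estimate displayed above being the single substantive step.
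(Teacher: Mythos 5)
Your proposal is correct and follows exactly the route the paper intends: the corollary is stated as an immediate consequence of Theorem \ref{JM:MainThm}, and your verification of the hypotheses — membership in $\mathcal{T}$, $\alpha$-mixing of the $T_{\beta_i}$, mixing of $T_G$, and the growth condition \eqref{jmcond} via the factorization using positivity of all the entropies — is precisely the check the authors leave implicit. No gaps.
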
 

We suspect that $\epsilon_0$ in the formulation \eqref{jmcond} can be removed.
\begin{Question}
Does Theorem \ref{JM:MainThm} hold if the condition \eqref{jmcond} is replaced with the following one:
\begin{equation*}
\lim_{n \rightarrow \infty} ( a_n^{(i+1)} h(T_{i+1}) - a_n^{(i)} h(T_i) ) = \infty \,\, ?
\end{equation*} 
\end{Question}

\section{Appendix: Rank one transformations on the interval $[0,1]$}
\label{Sec:App}
In this Appendix, we present a constructive geometric definition of rank one transformations (see \cite{Fer}, Definition 4).
This will allow us to explain why von Neumann-Kakutani transformation, Chacon transformation and Smorodinsky-Adams transformation mentioned in Subsection \ref{subsec:rank1} are rank one and belong to the class $\mathcal{T}$.

\begin{Definition}
A measure preserving system $(X, \mathcal{B}, \mu, T)$ is called {\em rank one} if there exist a sequences of positive integers $q_n$, $n \in \mathbb{N} \cup \{0\}$, with $q_n >1$ for infinitely many $n$, and nonnegative integers $s_{n, i}, n \in \mathbb{N} \cup \{0\}, 0 \leq i \leq q_{n-1}$, such that, if $h_n$ are defined by 
\begin{equation}
h_0 = 1, \,\, h_{n+1} = q_n h_n + \sum_{i=0}^{q_n-1} s_{n,i}
\end{equation}
then 
\begin{equation}
\label{finite:rank}
 \sum_{n=0}^{\infty} \frac{h_{n+1} - q_n h_n}{h_{n+1}} < \infty,
\end{equation}
and subsets of $X$, denoted by $F_n, n \in \mathbb{N} \cup \{0\}$, $F_{n, i}, n \in \mathbb{N} \cup \{0\}, 0 \leq i \leq q_n -1$, and $S_{n, i, j}, n \in \mathbb{N} \cup \{0\}, 0 \leq i \leq q_n -1, 0 \leq j \leq s_{n,i} -1$, (if $s_{n, i} = 0$ there are no $S_{n, i,j}$), such that for all $n$:
\begin{itemize}
\item $(F_{n,i}, 0 \leq i \leq q_n -1)$ is a partition of $F_n$,
\item $T^k F_n, 0 \leq k \leq h_n-1,$ are disjoint,
\item $T^{h_n} F_{n,i} = S_{n,i,1}$ if $s_{n,i} \ne 0$,
\item $T^{h_n} F_{n,i} = F_{n, i+1}$ if $s_{n,i} = 0$ and $0 < i < q_n-1$,
\item $TS_{n,i,j} = S_{n,i,j+1}$ if $j < s_{n,i} -1$,
\item $TS_{n, i, s_{n, i-1}} = F_{n, i+1}$ if $i < q_n-1$,
\item $F_{n+1} = F_{n,0}$,
\end{itemize}
and the partitions $P_n = \{ F_n, T F_n, \dots, T^{h_n -1} F_n, X \setminus \bigcup\limits_{k=0}^{h_n -1} T^k F_n \}$ are increasing to $\mathcal{B}$.
\end{Definition}

Note the above condition \eqref{finite:rank} guarantees that $\mu(X) < \infty$.
By choosing $r = \mu(F_0)$ appropriately, we can guarantee $\mu(X) = 1$.
Since the main focus in this article is interval maps, it is convenient to assume that $X = [0,1]$ and $\mu = \lambda$. Moreover, we may and will assume that the sets $F_{n,i}$ and $S_{n,i,j}$ are intervals and that transformation $T$ on $X = [0,1]$ is piecewise linear.

At stage $n$, we have a Rokhlin tower $\tau_n = \{F_n, T F_n, \dots, T^{h_n -1} F_n\}$ with base $F_n$ and height $h_n$. We may assume that $\tau_n$ consists of intervals having equal length and $T$ is a piecewise linear map sending $T^i F_n$ to $T^{i+1} F_{n}$.  At stage $n+1$, Rokhlin tower $\tau_{n+1}$  is constructed from $\tau_n$ by cutting into $q_n$ columns of equal width and stacking them above each other with the possible addition of intervals $S_{n,i,j}$ between the columns.
So, $\tau_{n+1}$ consists of $h_{n+1} (= q_n h_n + \sum_{i=0}^{q_n -1} s_{n,i})$ intervals, and, at stage $n+1$, $T$ is defined on all the intervals in $\tau_{n+1}$ except the last interval $T^{h_{n+1} -1} F_{n+1}$.
In this recursive process, we have that if $r = |F_0|$ is the length of the interval $F_0$, then $|F_{n}| = \frac{r}{q_0 \cdots q_{n-1}}$ for $n \geq 1$.

If for some $0 \leq i \leq h_{n+1}-2$, the interval $T^i F_{n+1}$ is a subset of $T^j F_n$ for some $0 \leq j \leq h_n -2$, then $T$ was already defined on $T^i F_{n+1}$ before stage $n+1$ and the number of such intervals is $q_n(h_n -1)$.
Therefore, at stage $n+1$, there are $h_{n+1} -1 - q_n(h_n -1)$ intervals $I_{n,j}, 1 \leq j \leq  h_{n+1} -1 -  q_n (h_n -1),$ in Rokhlin tower $\tau_{n+1}$, with the property that $T$ is defined on these intervals at stage $n+1$, but was not defined before stage $n+1$. 
Note that $|I_{n,j}| = |F_{n+1}| = \frac{r}{q_0 q_1 \cdots q_n}$ for any $j = 1, 2, \dots, h_{n+1} -1 -  q_n (h_n -1)$. 
Consider now the family $\mathcal{A} = \{ I_{n,j}: n = 1, 2, \cdots, 1 \leq j \leq  h_{n+1} -1 -  q_n (h_n -1) \}$, which forms a countable partition of $[0,1]$.
We have
\begin{align}
\label{rank1:FiniteEntropy}
   H (\mathcal{A}) &= - \sum_{n=0}^{\infty} (h_{n+1} -1 - q_n (h_n -1)) \frac{r}{q_0 \cdots q_n} \log \frac{r}{q_0 \cdots q_n} \notag \\
 &= - \sum_{n=0}^{\infty} \left(\sum_{i=0}^{q_n -1} s_{n,i} + q_n -1 \right) \frac{r}{q_0 \cdots q_n} \log \frac{r}{q_0 \cdots q_n}.
\end{align}
Note that $T$ is a piecewise linear map and so if the parameters $q_n$ and $s_{n,i}$ are such that the quantity in \eqref{rank1:FiniteEntropy} is finite, then $(T, \lambda, \mathcal{A}) \in \mathcal{T}$.

The following list specifies the parameters $q_n$ and $s_{n,i}$ for the examples presented in Subsection \ref{subsec:rank1}. 
\begin{enumerate}
\item Von Neumann-Kakutani transformation: $q_n = 2$ and $s_{n,i} = 0$.  
\item Chacon transformation: $q_n =3$ and $s_{n,i} = 1$ if $i=1$ and $0$ otherwise.  
\item Smorodinsky-Adams transformation: $q_n = n$ and $s_{n,0} = 0 , s_{n,i} = i-1$.  
\end{enumerate}
It is not hard to check that the choice of parameters $q_n$ and $s_{n,i}$ in each of the above examples guarantee that the quantity in \eqref{rank1:FiniteEntropy} is finite.


\begin{thebibliography}{999}

\bibitem{ADSZ}
J. Aaronson, M. Denker, O. Sarig and R. Zweim\"{u}ller, 
{\em Aperiodicity of cocycles and conditional local limit theorems.}
Stoch. Dyn. 4 (2004), no. 1, 31-62. 

\bibitem{AaNa}
J. Aaronson and H. Nakada,
{\em On the mixing coefficients of piecewise monotonic maps.} 
Israel J. Math. 148 (2005), 1-10.

\bibitem{Ad}
T. Adams, 
{\em Smorodinsky's conjecture on rank-one mixing.}
Proc. Amer. Math. Soc. 126 (1998), no. 3, 739-744.




\bibitem{Anz}
H. Anzai, 
{\em Ergodic skew product transformations on the torus.} 
Osaka Math. J., {\bf 3} (1951), 83-99.




\bibitem{AvFo}
A. Avila and G. Forni,
{\em Weak mixing for interval exchange transformations and translation flows.} 
Ann. of Math. (2) 165 (2007), no. 2, 637-664. 



\bibitem{BaBuDaKr}
J. Barrionuevo, R. M. Burton, K. Dajani and C. Kraaikamp,
{\em Ergodic properties of generalized L{\"u}roth series.} 
Acta Arith. 74 (1996), no. 4, 311-327. 


\bibitem{Be} 
D. Berend,
{\em Joint ergodicity and mixing.} 
J. Analyse Math. 45 (1985), 255-284. 


\bibitem{BeBe1}
D. Berend and V. Bergelson,
{\em Jointly ergodic measure-preserving transformations.} 
Israel J. Math. 49 (1984), no. 4, 307-314.


\bibitem{BeBe2} 
D. Berend and V. Bergelson, 
{\em Characterization of joint ergodicity for non-commuting transformations.}
 Israel J. Math. 56, 123-128 (1986).

\bibitem{KBerg1}
K. Berg, {\em Quasi-disjointness in ergodic theory.} 
Trans. Amer. Math. Soc. 162 (1971), pp. 71-87. 

\bibitem{KBerg2}
K. Berg, 
{\em Quasi-disjointness, products and inverse limits.} 
Math. Systems Theory 6 (1972), pp. 123-128. 

\bibitem{Berg}
V. Bergelson, 
{\em Independence properties of continuous flows.} 
Almost everywhere convergence (Columbus, OH, 1988), 121-130, Academic Press, Boston, MA, 1989. 

\bibitem{BergGor}
V. Bergelson and A. Gorodnik, 
{\em Weakly mixing group actions: a brief survey and an example.}
Modern dynamical systems and applications, 3-25, Cambridge Univ. Press, Cambridge, 2004. 


\bibitem{BeLe} 
V. Bergelson and A. Leibman, 
{\em A nilpotent Roth theorem.}
 Invent. Math. 147, 429-470 (2002). 

\bibitem{BeLeSo} 
V. Bergelson, A. Leibman,  and Y. Son,
{\em Joint ergodicity along generalized linear functions.} 
Ergodic Theory Dynam. Systems 36 (2016), no. 7, 2044-2075. 




\bibitem{Bla}
F. Blanchard, 
{\em $\beta$-expansions and symbolic dynamics.} 
Theoret. Comput. Sci. 65 (1989), no. 2, 131-141. 


\bibitem{Bor}
E. Borel, 
{\em Les probabilit\'{e}s d\'{e}nombrables et leurs applications arithm\'{e}tiques.} 
Rend. Circ. Mat. Palermo 27 (1909), 247-271.


\bibitem{BoGo}
A. Boyarsky and P. G\'{o}ra,
{\em Laws of chaos. Invariant measures and dynamical systems in one dimension.} 
Probability and its Applications. Birkh\"{a}user Boston, Inc., Boston, MA, 1997. xvi+399 pp. ISBN: 0-8176-4003-7. 

\bibitem{Bu}
J. Buzzi, 
{ \em A minicourse on entropy theory on the interval.} 
2006. hal-00112859.


\bibitem{Cha} 
R. V. Chacon,
{\em  A geometric construction of measure-preserving transformations.} 
Proc. of the Fifth Berkeley Symposium on Mathematical Statistics and Probability, Univ. of California Press (1965), 335-360.







\bibitem{delaRue}
T. de la Rue,
{\em Joinings in ergodic theory.}
 Mathematics of complexity and dynamical systems. Vols. 1-3, 796-809, Springer, New York, 2012.






\bibitem{Fer}
S. Ferenczi,
{\em Systems of finite rank.} 
Colloq. Math. 73 (1997), no. 1, 35-65. 

\bibitem{Fri}
N. A. Friedman, 
{\em Introduction to ergodic theory.} 
Van Nostrand, New York, 1970.
 
 


\bibitem{Fur}
H. Furstenberg,
{\em Disjointness in ergodic theory, minimal sets, and a problem in Diophantine approximation.} 
Math. Systems Theory 1 (1967), 1-49. 

\bibitem{Furbook}
H. Furstenberg,
{\em Recurrence in ergodic theory and combinatorial number theory.} 
M. B. Porter Lectures. Princeton University Press, Princeton, N.J., 1981. xi+203 pp. ISBN: 0-691-08269-3.

\bibitem{FKO}
H. Furstenberg, Y. Katznelson and D. Ornstein, 
{\em The ergodic theoretical proof of {S}zemer\'{e}di's theorem.} 
Bull. Amer. Math. Soc. (N.S.) 7 (1982), no. 3, 527-552. 

\bibitem{Ge} 
A. O. Gel'fond, 
{\em A common property of number systems.}
Izv. Akad. Nauk SSSR Ser. Mat., 23:6 (1959), 809-814. 

\bibitem{Go} 
M. I. Gordin, 
{\em Random processes produced by number-theoretic endomorphisms.} 
Dokl. Akad. Nauk SSSR 182 (1968), 1004-1006. 


\bibitem{Hal}
S. Halfin, 
{\em Explicit construction of invariant measures for a class of continuous state Markov processes.} 
Ann. Probability 3 (1975), no. 5, 859-864.

\bibitem{HKS}
M. Hirayama, D. Kim and Y. Son,
{\em On the multiple recurrence properties for disjoint systems.}
Israel J. Math. (2021). 







\bibitem{Kal} 
S. Kalikow,
{\em Twofold mixing implies threefold mixing for rank-1 transformations.} 
Ergodic Th. Dyn. Syst. 4 (1984), p.237-259.

\bibitem{Kat}
A. Katok,
{ \em Interval exchange transformations and some special flows are not mixing.} 
Israel J. Math. 35 (1980), no. 4, 301-310. 

\bibitem{Katz}
Y. Katznelson,
{\em Ergodic automorphisms of $T^n$ are Bernoulli shifts.} 
Israel J. Math. 10 (1971), 186-195. 





\bibitem{Kno}
K. Knopp,  
{\em Mengentheoretische Behandlung einiger Probleme der diophantischen Approximationen und der transfiniten Wahrscheinlichkeiten.} (German) 
Math. Ann. 95 (1926), no. 1, 409-426.

\bibitem{KN} 
L. Kuipers and H. Niederreiter,
{\em Uniform distribution of sequences.} 
Pure and Applied Mathematics. Wiley-Interscience [John Wiley \& Sons], New York-London-Sydney, 1974. xiv+390 pp. 



\bibitem{Lu}
J. L{\"u}roth,  
{\em Ueber eine eindeutige Entwickelung von Zahlen in eine unendliche Reihe.} (German) 
Math. Ann. 21 (1883), no. 3, 411-423.


\bibitem{Mar}
N. F. G. Martin, 
{\em On finite Blaschke products whose restrictions to the unit circle are exact endomorphisms.} 
Bull. London Math. Soc. 15 (1983), no. 4, 343-348. 



\bibitem{MoRiRo}
J. Moreira, F. K. Richter, and D. Robertson,
{\em Disjointness for measurably distal group actions and applications.}
arXiv:1708.01934


\bibitem{Nad2}
M. G. Nadkarni,
{\em Basic ergodic theory.}  
Second edition. Birkh\"{a}user Advanced Texts: Basler Lehrb\"{u}cher. [Birkh\"{a}user Advanced Texts: Basel Textbooks] Birkh\"{a}user Verlag, Basel, 1998. x+149 pp. ISBN: 3-7643-5816-5 

\bibitem{Or} 
D. Ornstein,
{\em On the root problem in ergodic theory.}
 Proceedings of the Sixth Berkeley Symposium on Mathematical Statistics and Probability (Univ. California, Berkeley, Calif., 1970/1971), Vol. II: Probability theory, pp. 347-356. Univ. California Press, Berkeley, Calif., 1972.

\bibitem{Pa0}
W. Parry, 
{\em On the $\beta$-expansions of real numbers.}
Acta Math. Acad. Sci. Hungar. 11 (1960), 401-416. 


\bibitem{Pa2} 
W. Parry,
{\em Representations for real numbers.} 
Acta Math. Acad. Sci. Hungar. 15 (1964), 95-105.

\bibitem{Pa3} 
W. Parry,
{\em Zero entropy of distal and related transformations.} 
1968 Topological Dynamics (Symposium, Colorado State Univ., Ft. Collins, Colo., 1967) pp. 383-389 Benjamin, New York. 

\bibitem{Pe} 
K. Petersen,
{\em Ergodic theory.} 
Cambridge Studies in Advanced Mathematics, 2. Cambridge University Press, Cambridge, 1983. xii+329 pp. ISBN: 0-521-23632-0.





\bibitem{PoPy} 
A. G. Postnikov and I. Pyatecki\u{\i},
{\em A Markov-sequence of symbols and a normal continued fraction.} (Russian) 
Izv. Akad. Nauk SSSR. Ser. Mat. 21 (1957) 729-746. 


\bibitem{Re}
A. R\'{e}nyi,
{\em Representations for real numbers and their ergodic properties.} 
Acta Math. Acad. Sci. Hungar. 8 (1957), 477-493.

\bibitem{Rok1}
V. A. Rohlin,
{\em Exact endomorphisms of a Lebesgue space.} 
Izv. Akad. Nauk SSSR Ser. Mat. 25 (1961) 499-530. 


\bibitem{Rok}
V. A. Rohlin,
{\em Lectures on the entropy theory of transformations with invariant measure.} (Russian) 
Uspehi Mat. Nauk 22 (1967) no. 5 (137), 3-56.




\bibitem{Ro}
M. Rosenblatt,
{\em A central limit theorem and a strong mixing condition.} 
Proc. Nat. Acad. Sci. U.S.A. 42 (1956), 43-47.


\bibitem{Ryll}
C. Ryll-Nardzewski,
{\em On the ergodic theorems. II. Ergodic theory of continued fractions.} 
Studia Math. 12 (1951), 74-79. 

\bibitem{Ryz1}
V. V. Ryzhikov, 
{\em Joinings and multiple mixing of the actions of finite rank.} 
Funktsional. Anal. i Prilozhen. 27 (1993), no. 2, 63-78, 96; translation in Funct. Anal. Appl. 27 (1993), no. 2, 128-140.

\bibitem{Ryz2}
V. V. Ryzhikov,
{\em Mixing, rank and minimal self-joining of actions with invariant measure.} 
Mat. Sb. 183 (1992), no. 3, 133-160; translation in Russian Acad. Sci. Sb. Math. 75 (1993), no. 2, 405-427. 









\bibitem{Vee}
W. A. Veech,
{The metric theory of interval exchange transformations. I, II, III.}  
Amer. J. Math. 106 (1984), no. 6, 1331-1421.



\bibitem{Walt}
P. Walters,
{\em An introduction to ergodic theory.} 
Graduate Texts in Mathematics, 79. Springer-Verlag, New York-Berlin, 1982. ix+250 pp. ISBN: 0-387-90599-5.

\bibitem{Wil} K. M. Wilkinson,
{\em Ergodic properties of certain linear mod one transformations.} 
Advances in Math. 14 (1974), 64-72. 

\end{thebibliography}
\end{document}